\newlength{\defbaselineskip}
\newcommand{\setlinespacing}[1]%
           {\setlength{\baselineskip}{#1 \defbaselineskip}}
\theoremstyle{plain}
\newtheorem{thm}{Theorem}[section]
\newtheorem{cor}[thm]{Corollary}
\newtheorem{lem}[thm]{Lemma}
\newtheorem{prop}[thm]{Proposition}
\theoremstyle{definition}
\newtheorem{defn}{Definition}[section]
\newtheorem{ass}{Assumption}[section]
\newtheorem{rmk}{Remark}[section]
\newcommand{\bR}{\mathbb{R}}
\newcommand{\bH}{\mathbb{H}}
\newcommand{\cD}{\mathcal{D}}
\newcommand{\cL}{\mathcal{L}}
\newcommand{\cM}{\mathcal{M}}
\newcommand{\cH}{\mathcal{H}}
\newcommand{\sF}{\mathscr{F}}
\newcommand{\sP}{\mathscr{P}}
\newcommand{\fH}{\mathfrak{H}}
\newcommand{\eps}{\varepsilon}
\newcommand{\vf}{\varphi}
\newcommand{\la}{\langle}
\newcommand{\ra}{\rangle}
\newcommand{\intl}{\interleave}
\newcommand{\ptl}{\partial}
\newcommand{\rrow}{\rightarrow}
\makeatletter\@addtoreset{equation}{section} \makeatother
\begin{document}

\title {Strong Solution of Backward Stochastic
Partial Differential Equations in $C^2$ Domains\footnotemark[1]}

\author{Kai Du\footnotemark[2] ~~~~ and ~~~~
Shanjian Tang\footnotemark[2]~\footnotemark[3]}
\date{}

\footnotetext[1]{Supported by NSFC Grant \#10325101, by Basic Research
Program of China (973 Program)  Grant \# 2007CB814904, by the Science
Foundation of the Ministry of Education of China Grant \#200900071110001, and
by WCU (World Class University) Program through the Korea Science and
Engineering Foundation funded by the Ministry of Education, Science and
Technology (R31-2009-000-20007).}

\footnotetext[2]{Department of Finance and Control Sciences, School of
Mathematical Sciences, Fudan University, Shanghai 200433, China.
\textit{E-mail}: \texttt{kdu@fudan.edu.cn} (Kai Du),
\texttt{sjtang@fudan.edu.cn} (Shanjian Tang).}

\footnotetext[3]{Graduate Department of Financial Engineering, Ajou
University, San 5, Woncheon-dong, Yeongtong-gu, Suwon, 443-749, Korea.}

\maketitle


\begin{abstract}
This paper is concerned with the strong solution to the Cauchy-Dirichlet
problem for backward stochastic partial differential equations of parabolic
type. Existence and uniqueness theorems are obtained, due to an application
of the continuation method under fairly weak conditions on variable
coefficients and $C^2$ domains. The problem is also considered in weighted
Sobolev spaces which allow the derivatives of the solutions to blow up near
the boundary. As applications, a comparison theorem is obtained and the
semi-linear equation is discussed in the $C^2$ domain.
\end{abstract}

AMS Subject Classification: 60H15; 35R60; 93E20

Keywords: Backward stochastic partial differential equations, Strong
solutions, $C^2$ domains, Weighted Sobolev spaces

\section{Introduction}
In this paper we consider the Cauchy-Dirichlet problem for backward
stochastic partial different equations (BSPDEs, for short) either in the
non-divergence form:
\begin{eqnarray}\label{eq:a1}
    \begin{split}
      d p (t,x) = & -\big[
      a^{ij}(t,x)p_{x^i x^j}(t,x) +b^i(t,x)p_{x^i}(t,x)
      -c(t,x)p(t,x)\\
      &+\sigma^{ik}(t,x)
      q^{k}_{x^i}(t,x)
      +\nu^k(t,x)q^k(t,x)+F(t,x)\big]dt\\
      &+q^k(t,x)dW^k_t,\quad~~~ (t,x)\in
      [0,T)\times \cD
    \end{split}
\end{eqnarray}
or in the divergence form:
\begin{equation}\label{eq:a2}
  dp=-\big[(a^{ij}p_{x^j}+\sigma^{ik}q^{k})_{x^{i}}
  +b^{i}p_{x^i}-c p +\nu^{k}q^{k}+F\big]dt
  +q^{k}dW_{t}^{k}
\end{equation}
with the terminal-boundary condition:
\begin{equation}\label{con:a1}
  \left\{\begin{array}{ll}
    p(t,x)=0,~~~~~&t\in [0,T],\ x\in\partial \mathcal{D},\\
    p(T,x)=\phi(x),~~~~~&x\in \mathcal{D}.
  \end{array}\right.
\end{equation}
Here $\mathcal{D}$ is a domain of the $d$-dimensional Euclidean space, and $W
\triangleq \{W^{k}_{t};t\geq 0\}$ is a $d_1$-dimensional standard Wiener
process, whose natural augmented filtration is denoted by
$\{\mathscr{F}_{t}\}_{t\geq 0}$. The coefficients $a,b,c,\sigma,\nu$ and the
free term $F$ and the terminal condition $\phi$ are all random fields. An
adapted solution of equation \eqref{eq:a1} or \eqref{eq:a2} is an
$\mathscr{P}\times B(\mathcal{D})$-measurable function pair $(p,q)$, which
satisfies, in addition to \eqref{con:a1}, equations \eqref{eq:a1} or
\eqref{eq:a2} under some appropriate sense, where $\mathscr{P}$ is the
predictable $\sigma$-algebra generated by $\{\mathscr{F}_{t}\}_{t\geq 0}$.

BSPDEs, which are a mathematically natural extension of backward
SDEs (see e.g. \cite{KPQ97,PaPe90}), arise in many applications of
probability theory and stochastic processes, for instance, in the
optimal control of SDEs with incomplete information or more
generally of stochastic parabolic PDEs, as adjoint equations
(usually in the form of \eqref{eq:a2}) of Duncan-Mortensen-Zakai
filtering equations (see e.g.
\cite{Bens83,NaNi90,Tang98a,Tang98b,Zhou93}) to formulate the
stochastic maximum principle for the optimal control, and in the
formulation of the stochastic Feynman-Kac formula (see e.g.
\cite{MaYo97}) in mathematical finance. A class of fully nonlinear
BSPDEs, the so-called backward stochastic Hamilton-Jacobi-Bellman
equations, appears naturally in the dynamic programming theory of
controlled non-Markovian processes (see \cite{EnKa09,Peng92a}). For
more aspects of BSPDEs, we refer to e.g.
\cite{BRT03,Tang05,TaZh09,Tess96}.

Equation \eqref{eq:a2} is usually understood in the weak sense (see
Definition \ref{defn:b1} (ii) in Section 2). When the coefficients
$a$ and $\sigma$ are differentiable in $x$, equation \eqref{eq:a1}
can be written into the divergence form \eqref{eq:a2}. The existence
and uniqueness of the weak solution of equation \eqref{eq:a2} in the
whole space $\bR^d$ follows from that of backward stochastic
evolution equations in Hilbert spaces (see e.g. \cite[Prop.
3.2]{DuMe09}). However, weak solutions have low regularity, which
find difficulty in many applications. Strong solutions and even
classical solutions are required in many occasions. In the case of
$\cD=\bR^d$, the theory of strong solutions on BSPDEs is now fairly
complete. For instance, a $W^n_2$-theory of the Cauchy problem for
BSPDEs can be found in \cite{Bens83,DuMe09,HuPe91,MaYo99,Zhou92}. On
the contrary, there are very few discussions on the Cauchy-Dirichlet
problem for BSPDEs. Here we could mention only two works. A special
form of equation \eqref{eq:a2} with Dirichlet conditions is studied
in \cite{Tess96} by the method of semigroups, in the context that
the coefficients are independent of $(\omega,t)$. The other is our
previous work \cite{DuTa09}, where the equations are solved in
weighted Sobolev spaces.

A main difficulty in strong solution of the Cauchy-Dirichlet problem
for BSPDEs (and SPDEs) is to estimate the second order partial
derivatives of the solution. In the theory of deterministic PDEs, it
is solved with the help of  the estimate of the derivative in $t$,
which makes any sense in general neither for BSPDEs nor for SPDEs.
For SPDEs, Flandoli \cite{Flan90} establishes some regularity under
additional compatibility conditions, and Krylov \cite{Kryl94}
studies the equations in weighted Sobolev spaces allowing the
derivatives of the solutions to blow up near the boundary of $\cD$.
Note that there is an essential difference between SPDEs and BSPDEs:
the noises in the former are exogenous and play an active role,
while in the latter they are governed by the randomness of the
coefficients and the terminal condition and thus they are
endogenous, coming from martingale representations. The regularity
of BSPDEs turns out to be more like deterministic PDEs  than that of
SPDEs.

In this paper, we prove the existence and uniqueness of the strong
solution (see Definition \ref{defn:b1} (i) in Section 2) of equation
\eqref{eq:a1} without involving any additional compatibility
condition nor any weighting functions. Our approach is based on the
method of the odd reflection and some classical techniques from the
theory of deterministic PDEs. Our assumptions on the coefficients
are rather natural since they include the rather general
deterministic PDEs where the leading coefficients are not
necessarily differentiable in $x$. Unfortunately, in contrast to
deterministic parabolic PDEs (see e.g. Theorem 7.1.6 in Evans
\cite{Evan98}), further regularity for BSPDEs seems to be hopeless
since the unknown random fields are not expected to be
differentiable with respect to $t$ as in the classical sense.
However, we can consider the equations in weighted Sobolev spaces
which allow the derivatives of the solutions to blow up near the
boundary. Starting from the existence and uniqueness of the strong
solution, we prove a slightly different version of our previous work
\cite{DuTa09}, and obtain the interior regularity for equation
\eqref{eq:a1}. In the last part of our paper, we prove a comparison
theorem for the strong solution of equation \eqref{eq:a1}, and we
also discuss a class of semi-linear BSPDEs in $C^2$ domains.

The rest of the paper is organized as follows. In Section 2, we
introduce some notations and preliminary results. In Section 3, we
state our main existence and uniqueness result in Theorem
\ref{thm:c1}, on the basis of which we study the equations in
weighted Sobolev spaces. Section 4  is devoted to the proof of
Theorem \ref{thm:c1}, which is divided into two subsections.
Finally, in Section 5, we prove a comparison theorem, and discuss
semi-linear BSPDEs in $C^2$ domains.

\section{Preliminaries}

\subsection{Notations}

Let $(\Omega,\mathscr{F},\{\mathscr{F}_{t}\}_{t\geq 0},P)$ be a complete
filtered probability space on which is defined a $d_1$-dimensional standard
Wiener process $W=\{W_t;t\geq 0\}$ such that $\{\mathscr{F}_{t}\}_{t\geq 0}$
is the natural filtration generated by $W$, augmented by all the $P$-null
sets in $\mathscr{F}$. Fix a positive number $T$. Denote by $\mathscr{P}$ the
$\sigma$-algebra of predictable sets on $\Omega \times [0,T)$ associated with
$\{\mathscr{F}_{t}\}_{t\geq 0}$.

Let $\mathcal{D}$ be a domain in $\mathbb{R}^{d}$ with boundary of class
$C^{2}$.

For the sake of convenience, we denote
$$
D_{i}u=u_{x^i},~~~ D_{ij}u=u_{x^i
x^j},~~~ Du=u_{x}=(D_{1}u,\dots,D_{d}u),~~~D^{2}u=(D_{ij}u)_{d\times d},
$$
and for any multi-index $\alpha=(\alpha_1,\dots,\alpha_d)$
$$D^{\alpha}=D_{1}^{\alpha_1}
D_{2}^{\alpha_2} \cdots D_{d}^{\alpha_d}, \quad |\alpha|=\alpha_1+\cdots
+\alpha_d.$$ For any two multi-indices $\alpha = (\alpha_1,\dots,\alpha_d)$
and $\beta = (\beta_1,\dots,\beta_d)$, we define
\[\alpha +\beta =(\alpha_1 +
\beta_1, \dots, \alpha_d + \beta_d).\] We shall also use the summation
convention.

Now we introduce some function spaces. For any integer $k\geq 0$, we denote
by $C^k(\cD)$ (or $C^k(\overline{\cD})$) the set of functions having all
derivatives up to order $k$ continuous in $\cD$ (or $\overline{\cD}$); by
$C_b^k(\cD)$ (or $C_b^k(\overline{\cD})$) the set of those functions in
$C^k(\cD)$ (or $C^k(\overline{\cD})$) whose partial derivatives up to order
$k$ are uniformly bounded in $\cD$ (or $\overline{\cD}$).

For a given Banach space $\mathcal{B}$, denote by $L^{2}(\Omega\times
(0,T),\mathscr{P},\mathcal{B})$ the space of all $\mathcal{B}$-valued
predictable process $X: \Omega\times[0,T] \rrow \mathcal{B}$ such that $E
\int_{0}^{T} \|X(t)\|_{\mathcal{B}}^{2} dt < \infty.$

Let $H^{m}(\cD)$ be the Sobolev space $W^{m,2}(\cD)$ and
$H^{m}_{0}(\cD)=W^{m,2}_{0}(\cD)$. Denote
\begin{eqnarray*}\begin{split}
  & \mathbb{H}^{0}(\mathcal{D})=L^{2}(\Omega\times (0,T),
  \mathscr{P},L^{2}(\mathcal{D})),\\
  & \mathbb{H}^{m}(\mathcal{D})=L^{2}(\Omega\times (0,T),
  \mathscr{P},H^{m}(\mathcal{D})),~~~m=-1,1,2,\dots,\\
  & \mathbb{H}^{n}_{0}(\mathcal{D})=L^{2}(\Omega\times (0,T),
  \mathscr{P},H^{n}_{0}(\mathcal{D})),~~~~~n=1,2,3,\dots,\\
  & \bH^{1}_{0}\cap\bH^{2}(\mathcal{D})=L^{2}(\Omega\times (0,T),
  \mathscr{P},H^{1}_{0}(\mathcal{D})\cap H^{2}(\cD)).
  \end{split}
\end{eqnarray*}
Note that $\bH^{1}_{0}(\bR^d)=\bH^{1}(\bR^d),\bH^{1}_{0}\cap\bH^{2}(\bR^d)
=\bH^{2}(\bR^d)$. In addition, denote
\begin{equation*}
  \|\cdot\|_{0,\cD}=\|\cdot\|_{L^{2}(\cD)},~~~
  \|\cdot\|_{m,\cD}=\|\cdot\|_{H^{m}(\cD)},~m = -1,1,2,\dots.
\end{equation*}
Moreover, for a function $u$ defined on $\Omega\times [0,T]\times\cD$, we
denote
$$\intl u \intl_{m,\cD}^{2} = E \int_{0}^{T}\|u(t,\cdot)\|_{m,\cD}^{2}dt,
~~~~m=-1,0,1,2,\dots.$$ The same notations will be used for vector-valued and
matrix-valued functions, and in the case we denote
$|u|^{2}=\sum_{i}|u^{i}|^{2}$ and $|u|^{2}=\sum_{ij}|u^{ij}|^{2}$,
respectively.

Furthermore, we define the two product spaces
\begin{eqnarray}
  \begin{split}
    &\cH^{2,1}(\cD)=\big(\bH^{1}_{0}\cap\bH^{2}(\cD)\big)\times
    \bH^{1}(\cD;\bR^{d_1}),\\
    &\cH^{2,1}_{0}(\cD)=\big(\bH^{1}_{0}\cap\bH^{2}(\cD)\big)\times
    \bH^{1}_{0}(\cD;\bR^{d_1}),\\
  \end{split}
\end{eqnarray}
both being equipped with the norm
$$\|(u,v)\|_{\cH^{2,1}(\cD)}=\big(\intl u \intl_{2,\cD}^{2}
+\intl v\intl_{1,\cD}^{2}\big)^{1/2}.$$ It is clear that both
$\cH^{2,1}(\cD)$ and $\cH^{2,1}_{0}(\cD)$ are Banach spaces.

\subsection{An It\^o formula}

Let $V$ and $H$ be two separable Hilbert spaces such that $V$ is densely
embedded in $H$. We identify $H$ with its dual space, and denote by $V^{*}$
the dual of $V$. We have $V \subset H \subset V^{*}$. Denote by
$\|\cdot\|_{H}$ the norms of $H$, by $\la\cdot,\cdot\ra_{H}$ the scalar
product in $H$, and by $\la\cdot,\cdot\ra$ the duality product between $V$
and $V^{*}$.

Consider three processes $v,m$, and $v^{*}$, defined on
$\Omega\times[0,T]$, taking values in $V,H$ and $V^{*}$,
respectively. Let $v(\omega,t)$ be measurable with respect to
$(\omega,t)$ and be $\mathscr{F}_{t}$-measurable with respect to
$\omega$ for a.e. $t$. For any $\eta\in V$, the quantity $\la\eta,
v^{*}(\omega,t)\ra$ is $\mathscr{F}_{t}$-measurable in $\omega$ for
a.e. $t$ and is measurable with respect to $(\omega,t)$. Assume that
$m(\omega,t)$ is strongly continuous in $t$ and
$\mathscr{F}_{t}$-measurable with respect to $\omega$ for any $t$,
and that it is a local martingale. Let $\la m \ra$ be the increasing
process in the Doob-Meyer Decomposition of the sub-martingale
$\|m\|_{H}^{2}$ (see e.g. \cite[Page 1240]{KrRo81}).

Proceeding identically to the proof of Theorem 3.2 in Krylov and Rozovskii
\cite{KrRo81}, we have the following result concerning It\^o's formula, which
is the backward version of \cite[Theorem 3.2]{KrRo81}.

\begin{lem}\label{lem:b5}
  Let $\varphi\in L^{2}(\Omega,\mathscr{F}_{T},H)$. Suppose that for every
  $\eta \in V$ and almost every $(\omega,t)\in\Omega\times[0,T]$, it holds
  that
  \begin{equation*}
    \la \eta,v(t)\ra_{H} = \la \eta,\varphi \ra_{H}
    + \int_{t}^{T} \la \eta,v^{*}(s) \ra ds
    + \la \eta, m(T)-m(t) \ra_{H}.
  \end{equation*}
  Then there exist a set $\Omega'\subset \Omega$ s.t. $P(\Omega')=1$
  and a function $h(t)$ with values in $H$ such that

    \emph{(a)} $h(t)$ is $\mathscr{F}_{t}$-measurable for any $t\in [0,T]$ and
    strongly continuous with respect to
    $t$ for any $\omega$, and $h(t)=v(t)$ (in the space $H$) a.e.
    $(\omega,t)\in \Omega\times [0,T]$, and $h(T)=\varphi$ for any
    $\omega\in\Omega'$;

    \emph{(b)} for any $\omega\in \Omega'$ and any $t\in [0,T]$,
    \begin{equation*}
      \|h(t)\|_{H}^{2} = \|\varphi\|_{H}^{2} + 2\int_{t}^{T}\la v(s),v^{*}(s)\ra
      ds + 2 \int_{t}^{T}\la h(s), d m(s)\ra_{H} - \la m \ra_{T} + \la m
      \ra_{t}.
    \end{equation*}
\end{lem}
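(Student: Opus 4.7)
The strategy is to imitate the standard Krylov--Rozovskii proof of the It\^o formula in a Gelfand triple, the only new ingredient being that every time integral runs backward from the terminal time $T$ down to $t$ rather than forward from $0$. First, I would choose a basis $\{e_i\}_{i\geq 1}\subset V$ that is orthonormal in $H$; this exists because $V$ is densely embedded in $H$ and, for a suitable choice (for example, eigenvectors of a Riesz isomorphism $V\to V^{*}$), the orthogonal projection $P_n:H\to H_n:=\mathrm{span}\{e_1,\dots,e_n\}$ is also a contraction on $V$. Setting $v_n:=P_nv$, $v_n^{*}:=\sum_{i=1}^{n}\la e_i,v^{*}\ra\,e_i\in H_n$, $m_n:=P_n m$, and $\varphi_n:=P_n\varphi$, I would take $\eta=e_i$ in the assumed weak identity for $i=1,\dots,n$ to get a coupled system of $n$ scalar backward equations
\begin{equation*}
  \la e_i,v(t)\ra_{H}=\la e_i,\varphi\ra_{H}+\int_{t}^{T}\la e_i,v^{*}(s)\ra\,ds+\la e_i,m(T)-m(t)\ra_{H}.
\end{equation*}
The right-hand side is continuous in $t$, so each coordinate $\la e_i,v(\cdot)\ra_H$ already admits a continuous $\mathscr{F}_t$-adapted modification, and hence so does the finite-dimensional process $v_n$.

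Next, I would apply the ordinary It\^o formula on the finite-dimensional Hilbert space $H_n$ to $\|v_n(\cdot)\|_H^2$, integrated backward from $T$ to $t$, to obtain the finite-dimensional counterpart of (b):
\begin{equation*}
  \|v_n(t)\|_H^2=\|\varphi_n\|_H^2+2\int_{t}^{T}\la v_n(s),v_n^{*}(s)\ra\,ds+2\int_{t}^{T}\la v_n(s),dm_n(s)\ra_H-\la m_n\ra_T+\la m_n\ra_t,
\end{equation*}
valid pointwise in $(\omega,t)$ on a full-measure set. Here $\la m_n\ra_t\uparrow\la m\ra_t$ by Parseval, $\varphi_n\to\varphi$ strongly in $H$, and (after passage to a subsequence if necessary) $v_n(s)\to v(s)$ both in $H$ and, by the contraction property of $P_n$ on $V$, in $V$ for a.e.\ $(\omega,s)$; consequently the Lebesgue integral $\int_t^T\la v_n,v_n^{*}\ra\,ds$ converges in probability to $\int_t^T\la v(s),v^{*}(s)\ra\,ds$ by dominated convergence once we bound $\|v_n\|_V\le\|v\|_V$ and $\|v_n^{*}\|_{V^{*}}\le\|v^{*}\|_{V^{*}}$.

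The delicate step is to upgrade this convergence to a uniform-in-$t$ statement so as to produce a genuinely continuous $H$-valued modification of $v$, and this is where I expect the main obstacle to lie: since $m$ is only a \emph{local} martingale and $v$ is only in $V$ almost everywhere, the stochastic integral $\int_t^T\la v_n,dm_n\ra_H$ cannot be controlled directly. I would localize by a sequence of stopping times $\tau_k\uparrow T$ making $\la m\ra_{\tau_k}$ bounded, and then apply the Burkholder--Davis--Gundy inequality to the scalar martingale $\int_\cdot^{T\wedge\tau_k}\la v_n-v_{n'},dm\ra_H$ to show that $\{v_n\}$ is Cauchy in $C([0,\tau_k];H)$ in probability. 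Patching over $k$ yields an $\mathscr{F}_t$-adapted $H$-valued process $h$ that is strongly continuous on $[0,T]$ and agrees with $v$ for a.e.\ $(\omega,t)$, giving (a); passing to the limit in the displayed identity along the same stopping times then gives (b), and evaluating the weak identity at $t=T$ together with the density of $V$ in $H$ yields $h(T)=\varphi$ on $\Omega'$.
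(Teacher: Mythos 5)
The paper does not actually reprove this lemma: it notes that the identity $v(t)=\varphi+\int_t^T v^*(s)\,ds+m(T)-m(t)$ is just the forward equation $dv=-v^*\,dt+dm$ read from the terminal time, and then invokes the proof of Theorem 3.2 of Krylov--Rozovskii verbatim. That proof is not a Galerkin argument: after localization it takes a partition $\{t_j\}$ of $[0,T]$ and telescopes the elementary identity $\|v(t_{j+1})\|_H^2-\|v(t_j)\|_H^2=2\la v(t_{j+1})-v(t_j),v(t_{j+1})\ra_H-\|v(t_{j+1})-v(t_j)\|_H^2$, using the equation in $V^*$ to evaluate each increment against $v(t_{j+1})\in V$ for almost every choice of partition points; no projection of $v$ onto finite-dimensional subspaces appears. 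Your route is therefore genuinely different, and it has a gap exactly where it departs from theirs.

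The gap is the basis postulated in your first step. For a general Gelfand triple of separable Hilbert spaces $V\subset H\subset V^*$ it is not known (and for the stronger ``orthogonal in both spaces'' property it is false) that there is an orthonormal basis $\{e_i\}$ of $H$ contained in $V$, with span dense in $V$, whose $H$-orthogonal projections $P_n$ are uniformly bounded on $V$. The construction you propose --- eigenvectors of the Riesz isomorphism $V\to V^*$ --- works only when the embedding $V\hookrightarrow H$ is compact, so that the associated self-adjoint operator has discrete spectrum; compactness is not assumed in the lemma and fails in the very setting this paper needs, namely $V=H^1_0(\cD)$ or $H^1_0(\cD)\cap H^2(\cD)$ and $H=L^2(\cD)$ with $\cD$ an unbounded $C^2$ domain such as a half space (already for $H^1(\bR)\subset L^2(\bR)$ no basis orthogonal in both spaces can exist, since $(1-\Delta)^{-1}$ has purely continuous spectrum). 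Everything downstream leans on this property: the bounds $\|v_n\|_V\le\|v\|_V$ and $\|v_n^*\|_{V^*}\le\|v^*\|_{V^*}$, the convergence $v_n\to v$ in $V$, and hence the dominated-convergence passage $\int_t^T\la v_n,v_n^*\ra\,ds\to\int_t^T\la v,v^*\ra\,ds$ (note $\la v_n,v_n^*\ra=\la P_nv,v^*\ra$, which without uniform $V$-bounds on $P_n$ need be neither convergent nor dominated). The rest of your argument --- localization of the local martingale, Burkholder--Davis--Gundy to get Cauchyness in $C([0,T];H)$ in probability, identification of $h(T)$ with $\varphi$ via continuity and density --- is sound, and you are right that the backward orientation is purely notational. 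To close the gap you must either add the (restrictive) hypothesis that such a basis exists, or replace the Galerkin step by the partition argument of Krylov--Rozovskii, which avoids projecting $v$ altogether.
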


\subsection{Notions of solutions to BSPDEs}

Throughout this paper, we assume that the given functions
\begin{eqnarray*}
  \begin{split}
    &a=(a^{ij}):~\Omega\times[0,T]\times\cD \rrow S^{d},~~~~
    &&\sigma=(\sigma^{ik}):~\Omega\times[0,T]\times\cD \rrow \bR^{d\times
    d_1},\\
    &b=(b^{i}):~\Omega\times[0,T]\times\cD \rrow \bR^{d},
    &&\nu=(\nu^k):~\Omega\times[0,T]\times\cD \rrow \bR^{d_1},\\
    &c,~F:~\Omega\times[0,T]\times\cD \rrow \bR
  \end{split}
\end{eqnarray*}
are all $\mathscr{P} \times B(\mathcal{D})$-measurable ($S^d$ is the set of
real symmetric $d\times d$ matrices), and the function $\phi:\Omega\times\cD
\rrow \bR$ is $\mathscr{F}_T\times B(\mathcal{D})$-measurable.
\medskip

Let us now turn to the notions of solutions to equations \eqref{eq:a1} and
\eqref{eq:a2}. Throughout this subsection it will be supposed that the
coefficients of our equations, i.e., the functions $a,b,c,\sigma$ and $\nu$,
are all bounded.

\begin{defn}\label{defn:b1}
  A pair of random fields $\{ (p(\omega,t,x),q(\omega,t,x));~
  (\omega,t,x) \in \Omega \times [0,T] \times \bR^d \}$ is called

  (i) a strong solution of equation \eqref{eq:a1} with the terminal-boundary
  condition \eqref{con:a1} if
  $(p,q)\in \cH^{2,1}(\cD)$ and
  $p\in C([0,T],L^{2}(\cD))~(a.s.)$ such that
  for each $t\in [0,T]$ and a.s. $\omega\in\Omega$, it holds that
  \begin{eqnarray}\label{eq:b2}
    \begin{split}
      p(t,x) = & \phi(x)
      +\int_{t}^{T}\big[
      a^{ij}(s,x)D_{ij}p(s,x)+b^{i}(s,x)D_{i}p(s,x) -c(s,x)p(s,x)\\
      &+\sigma^{ik}(s,x)D_{i}q^{k}(s,x)+\nu^k(s,x)q^k(s,x)
      +F(s,x)\big] ds -\int_{t}^{T}q^k(s,x)
      dW^k_s
    \end{split}
  \end{eqnarray}
  for almost every $x\in\bR^d$;

  (ii) a weak
  solution of equation \eqref{eq:a2} with the terminal-boundary
  condition \eqref{con:a1}, if $(p,q)\in \bH^{1}_{0}(\cD)\times
  \mathbb{H}^{0}(\mathcal{D};\bR^{d_1})$ such that for every $\eta \in
  C_{0}^{\infty}(\mathcal{D})$ and almost every $(\omega,t)\in\Omega\times[0,T]$,
  it holds that
  \begin{eqnarray}\label{eq:b1}
    \begin{split}
      \int_{\mathcal{D}}p(t,\cdot)\eta dx = & \int_{\mathcal{D}}\phi\eta dx
      +\int_{t}^{T}\int_{\mathcal{D}}\big[
      \big(a^{ij}D_{i}p+\sigma^{jk}q^{k}\big)D_{j}\eta\\
      &~~+\big(b^{i}D_{i}p -cp+\nu^kq^k
      +F\big)\eta\big]dx dt
      -\int_{t}^{T}\int_{\mathcal{D}}q^k\eta
      dx dW^k_t.
    \end{split}
  \end{eqnarray}
\end{defn}

It follows from Lemma \ref{lem:b5} that the first component of the weak
solution of equation \eqref{eq:a2} has a continuous version in $L^{2}(\cD)$,
i.e., $p\in C([0,T],L^{2}(\cD))~(a.s.)$. For the strong solution of equation
\eqref{eq:a1} with the terminal-boundary condition \eqref{con:a1}, we have
the following

\begin{prop}\label{prop:b1}
  Let $(p,q)\in \cH^{2,1}(\cD)$. The following statements are equivalent:

  \emph{(i)} $(p,q)$ is a strong solution of BSPDE \eqref{eq:a1}
  and \eqref{con:a1};

  \emph{(ii)} for every $\eta \in
  C_{0}^{\infty}(\mathcal{D})$ and a.e. $(\omega,t)\in\Omega\times[0,T]$,
  it holds that
  \begin{eqnarray}\label{eq:b3}
    \begin{split}
      \int_{\mathcal{D}}p(t,\cdot)\eta dx = & \int_{\mathcal{D}}\phi\eta dx
      +\int_{t}^{T}\int_{\mathcal{D}}
      \big[ a^{ij}D_{ij}p+b^{i}D_{i}p-cp\\
      & +\sigma^{ik}D_{i}q^{k}+\nu^kq^k
      +F\big]\eta dx dt
      -\int_{t}^{T}\int_{\mathcal{D}}q^k\eta
      dx dW^k_t.
    \end{split}
  \end{eqnarray}
\end{prop}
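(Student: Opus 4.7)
The proof splits into two implications; (i) $\Rightarrow$ (ii) is essentially a test-function computation, while (ii) $\Rightarrow$ (i) is the substantive direction and relies on Lemma \ref{lem:b5} together with a stochastic Fubini theorem. For (i) $\Rightarrow$ (ii), I would pair \eqref{eq:b2} with a test function $\eta\in C_0^\infty(\cD)$, integrate over $\cD$, and interchange $\int_{\cD} dx$ with $\int_t^T ds$ and with $\int_t^T dW^k_s$ (classical and stochastic Fubini, respectively). Each term bracketed in \eqref{eq:b2} lies in $\bH^0(\cD)$ because $(p,q)\in\cH^{2,1}(\cD)$ and all coefficients are bounded, so the pairings $\la \eta,\cdot\ra$ are square-integrable predictable processes and the interchanges are legitimate, producing \eqref{eq:b3}.

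For (ii) $\Rightarrow$ (i), set
\[
f(t,x)=a^{ij}D_{ij}p+b^iD_ip-cp+\sigma^{ik}D_iq^k+\nu^kq^k+F,
\]
which belongs to $\bH^{0}(\cD)$ under the standing regularity and boundedness assumptions. All pairings appearing in \eqref{eq:b3} are continuous in $\eta$ with respect to the $L^2(\cD)$-norm, so density of $C_0^\infty(\cD)$ in $L^2(\cD)$ extends the identity to every $\eta\in L^2(\cD)$ for a.e.\ $(\omega,t)$. This is precisely the hypothesis of Lemma \ref{lem:b5} with $V=H=L^2(\cD)$, $v(t)=p(t)$, $v^*(t)=-f(t)$, $m(t)=-\int_t^T q^k(s)\,dW^k_s$ and $\varphi=\phi$; the lemma then supplies a strongly continuous $L^2(\cD)$-valued version $h$ of $p$.

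It remains to recover the pointwise equation. I would introduce
\[
\tilde p(t,x)=\phi(x)+\int_t^T f(s,x)\,ds-\int_t^T q^k(s,x)\,dW^k_s,
\]
so that $\tilde p(t,x)$ is the candidate right-hand side of \eqref{eq:b2}. A second stochastic-Fubini argument shows that $\tilde p$ is a well-defined, $t$-continuous $L^2(\cD)$-valued process whose $L^2(\cD)$-pairing with any $\eta\in C_0^\infty(\cD)$ reproduces the right-hand side of \eqref{eq:b3}. Subtracting gives $\la h(t)-\tilde p(t),\eta\ra=0$ for every such $\eta$ and a.e.\ $(\omega,t)$; continuity of both $h$ and $\tilde p$ in $t$ together with density of $C_0^\infty(\cD)$ upgrade this to $h(t)=\tilde p(t)$ in $L^2(\cD)$ for every $t\in[0,T]$, a.s. Taking $h$ as the representative of $p$ required by the strong-solution definition, the equality $p(t,x)=\tilde p(t,x)$ for a.e.\ $x$ is exactly \eqref{eq:b2}.

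The one genuinely delicate point is the stochastic Fubini identification
\[
\left\langle \int_t^T q^k(s)\,dW^k_s,\,\eta\right\rangle_{L^2(\cD)}=\int_t^T\la q^k(s),\eta\ra_{L^2(\cD)}\,dW^k_s,
\]
between the $L^2(\cD)$-valued It\^o integral paired with $\eta$ and the corresponding scalar It\^o integral of the pairing. This is standard once $q\in\bH^{0}(\cD;\bR^{d_1})$ is known to be jointly predictable, but it is the one step where some care is required; the remaining ingredients (density of $C_0^\infty(\cD)$, boundedness of coefficients, application of Lemma \ref{lem:b5}) are routine.
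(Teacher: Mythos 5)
Your proposal is correct and follows essentially the same route as the paper: Lemma \ref{lem:b5} supplies the continuous $L^2(\cD)$-valued version of $p$, time-continuity of the (stochastic) integrals upgrades the tested identity from a.e.\ $t$ to all $t$, and density of $C_0^\infty(\cD)$ in $L^2(\cD)$ converts it into the $L^2(\cD)$-valued equation \eqref{eq:b2}. You merely spell out the stochastic Fubini step and the auxiliary process $\tilde p$ that the paper's proof leaves implicit as ``clear'' and ``evident.''
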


\begin{proof}
  It is clear that (i) $\Rightarrow$ (ii). Now we prove (ii) $\Rightarrow$
  (i). It follows from Lemma \ref{lem:b5} that $p\in
  C([0,T],L^{2}(\cD))~(a.s.)$. Then from the time continuity of the (stochastic)
  integrals, we know that equation \eqref{eq:b3} holds almost surely for every
  $\eta \in C^{\infty}_{0}(\cD)$ and all $t\in[0,T]$. On the other hand,
  since $(p,q)\in
  \cH^{2,1}(\cD)$, both sides of equation \eqref{eq:b2}
  (as functions of $x$) belong to $L^{2}(\cD)$ a.s. for every $t$.
  Since  $C^{\infty}_{0}(\cD)$ is dense in $L^{2}(\cD)$,
  we know that equation \eqref{eq:b2} holds in the space $L^2(\cD)$
  for every $t$, which evidently implies (i). The proof is complete.
\end{proof}

\begin{rmk}
  Note that the space of test functions $C^{\infty}_{0}(\cD)$ in
  Definition \ref{defn:b1} and Proposition \ref{prop:b1} can be
  replaced with $H^{1}_{0}(\cD)$.
\end{rmk}

If the coefficients $a$ and $\sigma$ are differentiable in $x$, then equation
\eqref{eq:a1} can be written into the divergence form \eqref{eq:a2}, which
allows us to define the weak solution to \eqref{eq:a1}. Then Proposition
\ref{prop:b1} indicates that a strong solution of \eqref{eq:a1} is a weak
solution of \eqref{eq:a1}. \medskip

The following lemma concerns the existence and uniqueness of the weak
solution of equation \eqref{eq:a2}, which is borrowed from Proposition 3.2 in
\cite{DuMe09}. On the other hand, it can be proved by the duality method as
in Zhou \cite{Zhou92} and Lemma \ref{lem:b5}.

\begin{lem}\label{lem:b3}
  Assume that $\kappa I+ \sigma\sigma^{*}\leq 2a \leq
  \kappa^{-1}I$ for some constant $\kappa>0$ and that the functions $b^i,c$ and $\nu^{k}$
  are bounded by $K$. Suppose that $F\in \mathbb{H}^{-1}(\mathcal{D})$ and $\phi \in
  L^{2}(\Omega,\mathscr{F}_{T},L^{2}(\mathcal{D}))$. Then equation \eqref{eq:a2}
  with the terminal-boundary condition
  \eqref{con:a1} has a unique weak solution $(p,q)$ in the
  space $\mathbb{H}^{1}_{0}(\mathcal{D})
  \times\mathbb{H}^{0}(\mathcal{D};\bR^{d_1})$
  such that $p\in C([0,T],L^{2}(\cD))~(a.s.)$, and
  \begin{eqnarray}\label{leq:b1}
    \intl p \intl_{1,\cD}^{2} +\intl q \intl_{0,\mathcal{D}}^{2}
    +E\sup_{0\leq t\leq T}\|p(t,\cdot)\|_{0,\cD}^{2}
    \leq C\big{(} \intl F \intl_{-1,\mathcal{D}}^{2} + E \|
    \phi\|_{0,\mathcal{D}}^{2}\big{)},
  \end{eqnarray}
  where the constant $C=C(K,\kappa,T)$.
\end{lem}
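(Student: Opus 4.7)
The idea is to cast the problem as a backward stochastic evolution equation in the Gelfand triple $V=H^1_0(\cD)\subset H=L^2(\cD)\subset V^*=H^{-1}(\cD)$. For $p\in V$ and $q\in H^{d_1}$ the right-hand side of \eqref{eq:b1} defines an element of $V^*$, namely
\[
v^*=(a^{ij}p_{x^j}+\sigma^{ik}q^k)_{x^i}+b^ip_{x^i}-cp+\nu^kq^k+F,
\]
so the weak formulation reads $p(t)=\phi+\int_t^T v^*(s)\,ds-\int_t^T q(s)\,dW(s)$. In this guise the assertion is the backward analogue of the Krylov--Rozovskii variational solvability theory; I would prove existence by a Galerkin approximation and obtain uniqueness together with the energy estimate \eqref{leq:b1} by direct use of the backward It\^o formula of Lemma \ref{lem:b5}, as in \cite[Prop.~3.2]{DuMe09}.

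For existence, fix an orthonormal basis $\{e_j\}\subset V$ of $H$ (the Dirichlet eigenfunctions of $-\Delta$ on $\cD$ are convenient) and project \eqref{eq:a2} onto $V_n=\mathrm{span}\{e_1,\dots,e_n\}$. The projected equation becomes a finite linear system of BSDEs for the Fourier coefficients $(p^n_j,q^{n,k}_j)_{j\le n}$, uniquely solvable by Pardoux--Peng theory. The heart of the matter is a uniform-in-$n$ a priori bound, obtained by applying Lemma \ref{lem:b5} to $\|p^n(t)\|_{0,\cD}^2$. After expanding $\la p^n,v^{*,n}\ra$ by integration by parts and combining the $\int\|q^n\|_{0,\cD}^2$ from the quadratic variation with the cross-term $2\int\sigma^{ik}q^{n,k}p^n_{x^i}\,dx$ via the identity
\[
|q+\sigma^\top Dp|^2=|q|^2+2\sigma^{ik}q^kp_{x^i}+|\sigma^\top Dp|^2,
\]
the principal quadratic contribution collects into
\[
\int|q+\sigma^\top Dp|^2\,dx+\int(2a^{ij}-\sigma^{ik}\sigma^{jk})p_{x^i}p_{x^j}\,dx\ge\int|q+\sigma^\top Dp|^2\,dx+\kappa\|Dp\|_{0,\cD}^2,
\]
by the super-parabolicity $2a\ge\kappa I+\sigma\sigma^*$. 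Since $\sigma$ is bounded, this combination dominates $\eps\|q\|_{0,\cD}^2+(\kappa-C\eps)\|Dp\|_{0,\cD}^2$ for small $\eps$. The lower-order drift terms and the $V/V^*$-pairing $|\la F,p\ra|\le\eps\|Dp\|_{0,\cD}^2+C_\eps\|F\|_{-1,\cD}^2$ are absorbed by Young's inequality, the Brownian integral vanishes upon taking expectation, and backward Gronwall delivers the $\bH^1_0\times\bH^0$ part of \eqref{leq:b1} uniformly in $n$. The almost sure supremum bound follows by taking the supremum \emph{before} taking expectation and applying Burkholder--Davis--Gundy to the martingale term.

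Weak compactness in $\bH^1_0(\cD)\times\bH^0(\cD;\bR^{d_1})$ then produces a limit $(p,q)$ satisfying \eqref{eq:b1}, and the continuity $p\in C([0,T],L^2(\cD))$ a.s.\ is supplied by Lemma \ref{lem:b5} itself. Uniqueness reduces to the same energy inequality applied to the difference of two solutions with $\phi\equiv 0$ and $F\equiv 0$, which forces $Dp\equiv 0$ and $q\equiv 0$, hence $p\equiv 0$ by the Dirichlet condition. The main subtlety is the quadratic combination just described: a naive Cauchy--Schwarz treatment of the cross-term $\int\sigma^{ik}q^kp_{x^i}\,dx$ would consume the entire super-parabolicity margin $\kappa$ and leave no gradient dissipation, so one must exploit the algebraic identity to convert the mixed term into a perfect square that can be discarded.
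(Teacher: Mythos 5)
The paper does not actually prove this lemma: it is quoted from \cite[Prop.\ 3.2]{DuMe09}, with the remark that it could alternatively be obtained by the duality method of Zhou \cite{Zhou92} together with Lemma \ref{lem:b5}. Your Galerkin-plus-energy-estimate argument is precisely the standard variational (Krylov--Rozovskii type) route that underlies the cited result, and it is sound. In particular you have correctly isolated the one genuinely delicate point, namely that the quadratic variation term $\int|q|^2$ and the cross term $2\int\sigma^{ik}q^{k}p_{x^i}$ must be recombined as $|q+\sigma^{*}Dp|^2+(2a-\sigma\sigma^{*})^{ij}p_{x^i}p_{x^j}$ so that the super-parabolicity condition $2a\ge\kappa I+\sigma\sigma^{*}$ yields coercivity in both $Dp$ and (a fraction of) $q$; a naive Cauchy--Schwarz on the cross term would indeed destroy the estimate. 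The handling of $F$ through the $V$--$V^{*}$ pairing, the Gronwall closure, and the BDG argument for $E\sup_t\|p(t)\|_{0,\cD}^2$ are all standard and correct. One small inaccuracy: in the uniqueness step you argue ``$Dp\equiv0$ and $q\equiv0$, hence $p\equiv0$ by the Dirichlet condition,'' which implicitly invokes a Poincar\'e inequality that need not hold on an unbounded $\cD$; but this is harmless, since your own Gronwall argument controls $\|p(t)\|_{0,\cD}^2$ directly and gives $p\equiv0$ without it. Compared with the duality proof mentioned in the paper (which constructs the solution by testing against solutions of an adjoint forward SPDE), your approach is self-contained and produces the a priori estimate \eqref{leq:b1} as a by-product of the construction rather than as a separate step.
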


\section{Existence, Uniqueness, and Regularity}

In this section, we state our main results on the existence,
uniqueness and regularity of the strong solution of equation
\eqref{eq:a1} with the terminal-boundary condition \eqref{con:a1}.

\subsection{Existence and uniqueness of strong solutions to BSPDEs}

The weak solution of a deterministic parabolic PDE can be shown to
belong to the space $L^2(0,T;H^{2}(\cD))$ if the free term belongs
to $L^{2}((0,T)\times\cD)$ and the initial data lies in
$H^{1}_{0}(\cD)$ (see e.g. Evans \cite{Evan98}). Flandoli
\cite{Flan90} formulates a counterpart for a SPDEs with additional
compatibility conditions on the free term. In the following, we
obtain a counterpart for a BSPDE, which, in a remarkable way, does
not require any compatibility condition like a SPDE. The higher
regularity of the (strong) solution allows us to weaken the
assumptions on the leading coefficients $a$ and $\sigma$ so that
they are not necessarily differentiable in $x$, where equation
\eqref{eq:a1} is difficult to be written into the divergence form
\eqref{eq:a2}.
\medskip

Fix some constants $K\in (1,\infty)$ and $\rho_{0},\kappa\in (0,1)$. Denote
$$B_{+}=\{x\in \mathbb{R}^{d}:|x|<1,x^1>0\}, ~~B_{\rho}(x)=\{y\in
\mathbb{R}^{d}:|x-y|<\rho \}.$$

\begin{ass}\label{ass:c1}
  For every $x\in \partial \mathcal{D}$ there exist a domain $U\subset
  B_{8K\rho_0}(x)$ containing the ball $B_{4\rho_0}(x)$ and a one-to-one map
  $\Phi:2B_{+}\rightarrow U\cap \mathcal{D}$ having the properties:
  \begin{eqnarray*}
    \begin{split}
      &x=\Phi(0),\quad\Phi(B_{+})\supset B_{4\rho_0}(x) \cap \mathcal{D},\quad
      \Phi(\partial B_{+}\cap \{x^{1}=0\})\subset \partial \cD;\\
      &\kappa |\xi|^2\leq \big{|}(D \Phi) \xi\big{|}^{2}\leq
      \kappa^{-1}|\xi|^2, ~~~~~\forall~\xi\in \mathbb{R}^{d};\\
      &|D^{\alpha}\Phi|\leq K~~~~~\textrm{for any multi-index}~\alpha~
      \textrm{s.t.}~ |\alpha|\leq 2,
    \end{split}
  \end{eqnarray*}
  where $D\Phi$ is the Jacobi matrix of $\Phi$.
\end{ass}

Note that in view of the Heine-Borel theorem, Assumption
\ref{ass:c1} is true if the domain $\cD$ is bounded and its boundary
is $C^2$.

\begin{ass}\label{ass:c2}
  The \emph{super-parabolicity} condition holds:
  \begin{equation*}
    \kappa I+ \sigma\sigma^{*}\leq 2a \leq
    \kappa^{-1}I, ~~~\forall~
    (\omega,t,x)\in \Omega\times [0,T]\times\cD.
  \end{equation*}
\end{ass}

\begin{ass}\label{ass:c3}
  There exists a function $\gamma: [0,\infty)\rrow[0,\infty)$
  such that (i) $\gamma$ is continuous and increasing, (ii) $\gamma(r)=0$ if and
  only if $r=0$, and (iii) for any $(\omega,t)\in\Omega\times[0,T]$ and any
  $x,y\in \cD$,
  \begin{equation}
    |a(\omega,t,x)-a(\omega,t,y)|\leq \gamma(|x-y|),~~~~
    |\sigma(\omega,t,x)-\sigma(\omega,t,y)|
    \leq \gamma(|x-y|).
  \end{equation}
\end{ass}
\medskip

We have the following existence and uniqueness theorem, whose proof
will be be given in the next section.

\begin{thm}\label{thm:c1}
  Let Assumptions \ref{ass:c1}, \ref{ass:c2}, and \ref{ass:c3} be
  satisfied. Assume that the functions $b^{i},c,$ and $\nu^{k}$ are
  bounded by $K$. If
  $F\in \bH^{0}(\cD)$ and
  $\phi\in L^{2}(\Omega,\sF_{T},H^{1}_{0}(\cD)),$
  BSPDE \eqref{eq:a1} and \eqref{con:a1}
  has a unique strong solution $(p,q)\in \cH^{2,1}_{0}(\cD)$ such that
  \begin{equation}\label{prp:c1}
    p\in C([0,T],L^{2}(\cD))\cap L^{\infty}([0,T],H^{1}(\cD))~(a.s.).
  \end{equation}
  Moreover, we have the following estimate
  \begin{equation}\label{leq:c1}
    \intl p\intl_{2,\cD}^{2}+\intl q\intl_{1,\cD}^{2}
    + E\sup_{0\leq t\leq T}\|p(t,\cdot)\|_{1,\cD}^{2}
    \leq C\big( \intl F \intl_{0,\cD}^{2} + E\|\phi\|_{1,\cD}^{2}\big),
  \end{equation}
  where the constant $C$ only depends on $K,\rho_{0},\kappa,T,$ and the function
  $\gamma$.
\end{thm}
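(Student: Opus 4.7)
The natural strategy is to combine an a priori $H^2$-estimate of the form \eqref{leq:c1} with the method of continuity. Uniqueness follows immediately by applying \eqref{leq:c1} to the difference of two solutions. For existence, I would interpolate between the spatial operator $\cL$ in \eqref{eq:a1} and the half-Laplacian via $\cL_\lambda = \lambda\cL + (1-\lambda)\tfrac12\Delta$; super-parabolicity and the modulus condition of Assumption \ref{ass:c3} are preserved along the family with uniform constants. The standard open-closed continuation argument then reduces the existence proof to (i) the uniform a priori bound \eqref{leq:c1} for the whole family, and (ii) solvability in $\cH^{2,1}_0(\cD)$ at $\lambda = 0$. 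The latter --- a backward heat equation with Dirichlet data and martingale driver --- is accessible through Galerkin in the Dirichlet-Laplacian eigenbasis, starting from the weak solution supplied by Lemma \ref{lem:b3}.

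The $L^2$ half of the a priori estimate, namely control of $\intl p\intl_{1,\cD}$ and $\intl q\intl_{0,\cD}$, is essentially Lemma \ref{lem:b3}, obtained by applying Lemma \ref{lem:b5} to $\|p(t,\cdot)\|_{0,\cD}^2$ and absorbing $\int 2\sigma^{ik}q^k D_i p$ into $\int 2a^{ij}D_i p D_j p$ by super-parabolicity, then invoking Gronwall. To upgrade to $H^2$ in $p$ and $H^1$ in $q$, I would apply Lemma \ref{lem:b5} to $\|Dp(t,\cdot)\|_{0,\cD}^2$; integration by parts, coupled with super-parabolicity, yields a bound on $\int|D^2 p|^2 + \int|Dq|^2$ modulo lower-order terms. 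Because $a,\sigma$ are only continuous and not differentiable (Assumption \ref{ass:c3}), the differentiated equation must be treated by a freezing argument: partition $\cD$ by a mesh of small balls of radius $r$, replace $a,\sigma$ by their values at the centres, and estimate the resulting error terms via $\|(a - a_0)D^2 p\|_{0} \leq \gamma(r)\|D^2 p\|_{0}$; choosing $r$ small forces these errors to be absorbed into the left-hand side.

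The genuine obstacle is the boundary, since naive differentiation in the normal direction does not respect the Dirichlet condition. I would localize through a finite cover supplied by Assumption \ref{ass:c1} and a subordinate partition of unity $\{\zeta_k\}$: in interior charts, the calculation above applies to $\zeta_k p$, while near a boundary point the chart $\Phi$ flattens $\partial\cD$ and transports \eqref{eq:a1} to a BSPDE on $B_+$ whose coefficients still verify Assumptions \ref{ass:c2}--\ref{ass:c3} with constants depending only on $K,\kappa$, and $\gamma$. The local solution is then extended to the full ball $B_1$ by odd reflection across $\{x^1 = 0\}$, with each coefficient reflected with the parity dictated by the structure of the equation --- $a^{11}$, $a^{jk}$ for $j,k\geq 2$, $c$, $\nu^k$, and $\sigma^{1k}$ extend evenly, while $a^{1j}$ for $j\geq 2$, $b^1$, and $\sigma^{jk}$ for $j\geq 2$ extend oddly --- so that the reflected $p$ satisfies a BSPDE of the same structure on $B_1$, to which the whole-space $W^2_2$-theory (and the freezing argument above) applies. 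Restricting back to $B_+$ and patching via the partition of unity, using the $L^2$ estimate to absorb commutators with $\zeta_k$, yields \eqref{leq:c1}.

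The step I expect to be most delicate is the reflection itself: odd and even extensions of merely continuous coefficients generically create jumps across $\{x^1 = 0\}$ unless a boundary compatibility enforces cancellation, so one must either verify that the extended coefficients retain a modulus of continuity usable by the whole-space theory or treat the jump by a perturbative splitting of the symbol. A related subtlety is the regularity of the reflected $q$-process, which is only constrained to lie in $\bH^1(\cD;\bR^{d_1})$ rather than $\bH^1_0$, so the parities for $\sigma$ must be chosen consistently with the boundary behaviour of $q$. These are precisely the technical points I expect the two subsections devoted to the proof of Theorem \ref{thm:c1} to address.
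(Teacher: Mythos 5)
Your skeleton---an a priori $\cH^{2,1}$ estimate combined with the method of continuity, the estimate being obtained by localization, boundary flattening via the charts of Assumption \ref{ass:c1}, freezing of the leading coefficients on small balls, and an odd reflection across the flattened boundary---is the same architecture as the paper's Section 4. The difference that matters is the \emph{order} in which you freeze and reflect, and it is precisely at the point you flag as delicate that your version has a genuine gap rather than a technicality. You propose to reflect the variable-coefficient equation itself, extending each coefficient with a parity dictated by the structure of the operator, and then to invoke the whole-space $W^2_2$ theory on $B_1$. But the odd extension of a continuous function that does not vanish on $\{x^1=0\}$ is discontinuous there, and after flattening a general $C^2$ boundary the transformed coefficient $\widetilde a^{1j}=a^{rs}\Psi^1_{x^r}\Psi^j_{x^s}$ has no reason to vanish on $\{y^1=0\}$ (that would force the conormal direction to be an eigenvector of $a$). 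Since the whole-space theory you want to cite (Lemma \ref{lem:d1} together with your freezing argument) is exactly where the uniform modulus of continuity of $a,\sigma$ is indispensable, the reflected equation falls outside its scope; the ``perturbative splitting of the symbol'' you mention in passing is not an optional refinement but the entire missing step. (Incidentally, the parities you assign to $\sigma^{1k}$ and $\sigma^{jk}$, $j\geq 2$, appear to be interchanged: $D_1$ of an odd $q$ is even, so it is $\sigma^{1k}$ that would need the odd extension.)

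The paper closes this gap by reversing the two operations. On the half space it first treats leading coefficients independent of $x$ (Lemma \ref{lem:e1}): there only the data $F$ and $\phi$ are reflected oddly, the whole-space solution inherits the odd symmetry by uniqueness, and no coefficient is ever extended. Variable coefficients are then reinstated by a contraction argument (Proposition \ref{prop:e}) exploiting that, by Assumption \ref{ass:c3}, $a$ and $\sigma$ deviate from their frozen values by at most a prescribed $\delta$ on balls whose radius is determined by $\gamma$; the localization in Proposition \ref{prop:f1} reduces the $C^2$ domain to this half-space model, and the $\bH^1_0$ membership of $q$ comes from Lemma \ref{lem:d}(b) pulled back through the chart (Lemma \ref{lem:f}). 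If you recast your reflection step in this order---perturb to constant-in-$x$ leading coefficients first, reflect only the data---your outline becomes the paper's proof. Two smaller remarks: the base point of the continuation should be the operator with leading coefficients frozen at a point (still random and time-dependent, handled by Proposition \ref{prop:f1}) rather than $\tfrac12\Delta$ solved in a Dirichlet eigenbasis, since $\cD$ need not be bounded and the $\sigma D_i q$ term must be carried along the homotopy; and the paper also needs a separate It\^o/Gronwall step to remove the lower-order terms $\intl p\intl_{1,\cD}$ and $\intl q\intl_{0,\cD}$ from the right-hand side of the patched local estimates, which your outline leaves implicit.
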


\begin{rmk}
Since all constants $C$ in this paper are independent of $d_1$, all our
results in this paper may be extended to the more general equation
\eqref{eq:a1} where the $d_1$-dimensional standard Wiener process is replaced
with a cylindrical Wiener process.
\end{rmk}

\subsection{Solution in weighted Sobolev spaces and regularity}

Unfortunately, we could not establish any higher regularity for
BSPDEs to correspond to the theory of deterministic parabolic PDEs,
as given by Evans \cite[Theorem 7.1.6]{Evan98}, since the unknown
functions are not expected to be differentiable with respect to $t$
as in the deterministic sense. However, we can consider the
equations in weighted Sobolev spaces allowing the derivatives of the
solutions to blow up near the boundary, and furthermore obtain an
interior regularity for BSPDE \eqref{eq:a1} and \eqref{con:a1}.
\medskip

Let $\psi \in C^{2}_b(\overline{\mathcal{D}})$ be a nonnegative function such
that $\psi(x)=0$ for any $x\in \partial \cD$. Fix a positive integer $n$.

\begin{ass}\label{ass:c5}
  For any multi-index $\alpha$ such that $|\alpha|\leq n$, we have
  \begin{eqnarray*}
    &\psi^{|\alpha|}(|D^{\alpha}a^{ij}|+|D^{\alpha}b^i|+|D^{\alpha}c|
    +|D^{\alpha}\sigma^{ik}|+|D^{\alpha}\nu^k|)\leq K,\\
    &\psi^{|\alpha|}D^{\alpha}F \in \bH^{0}(\cD), \quad
    \psi^{|\alpha|}D^{\alpha}\phi\in L^{2}(\Omega, \sF_{T},
    H^{1}_{0}(\cD)).
  \end{eqnarray*}
\end{ass}

Note that Assumption \ref{ass:c5} implies the boundedness of the
functions $b,c$ and $\nu$, but does not imply Assumption
\ref{ass:c3}. We have the following

\begin{thm}\label{thm:c2}
  Let Assumptions \ref{ass:c1}, \ref{ass:c2}, \ref{ass:c3},
  and \ref{ass:c5} be satisfied. Then BSPDE \eqref{eq:a1} and \eqref{con:a1}
  has a unique strong solution $(p,q)$ such that for any multi-index $\alpha$ s.t.
  $|\alpha|\leq n$,
  \begin{eqnarray}\label{prp:c2}
    \begin{split}
      &(\psi^{|\alpha|}D^{\alpha}p,\psi^{|\alpha|}D^{\alpha}q)
      \in \cH^{2,1}_{0}(\cD),~~\\
      &\psi^{|\alpha|}D^{\alpha}p\in C([0,T],L^{2}(\cD))
      \cap L^{\infty}([0,T],H^{1}(\cD))~(a.s.),
    \end{split}
  \end{eqnarray}
  and moreover
  \begin{eqnarray}\label{leq:c2}
    \nonumber \intl \psi^{|\alpha|}D^{\alpha}p\intl_{2,\cD}^{2}
    +\intl \psi^{|\alpha|}D^{\alpha}q\intl_{1,\cD}^{2}
    + E\sup_{0\leq t\leq T}\|\psi^{|\alpha|}D^{\alpha}p(t,\cdot)\|_{1,\cD}^{2}\\
    \leq C \sum_{|\beta|\leq |\alpha|}\bigg( \intl \psi^{|\beta|}
    D^{\beta}F \intl_{0,\cD}^{2}
    + E\|\psi^{|\beta|}D^{\beta}\phi\|_{1,\cD}^{2}\bigg),
  \end{eqnarray}
  where the constant $C$ only depends on the norm of $\psi$ in
  $C^{2}(\overline{\cD})$, the parameters $K,\rho_{0},\kappa$, and $T$, and
  the function $\gamma$.
\end{thm}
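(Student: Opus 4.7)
I would prove the theorem by induction on $k=|\alpha|$, appealing to Theorem \ref{thm:c1} at both the base and the inductive steps. The base case $k=0$ is immediate: Assumption \ref{ass:c5} at $\alpha=0$ yields boundedness of $a,b,c,\sigma,\nu$ by $K$, together with $F\in\bH^0(\cD)$ and $\phi\in L^2(\Omega,\sF_T,H^1_0(\cD))$, so Theorem \ref{thm:c1} supplies existence, uniqueness, the regularity \eqref{prp:c1}, and the estimate \eqref{leq:c1}, which are precisely the $|\alpha|=0$ cases of \eqref{prp:c2}--\eqref{leq:c2}.

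For the inductive step, fix $1\le k\le n$ and $\alpha$ with $|\alpha|=k$, assume \eqref{prp:c2}--\eqref{leq:c2} for every multi-index of order strictly less than $k$, and set $\tilde p:=\psi^k D^\alpha p$, $\tilde q^\ell:=\psi^k D^\alpha q^\ell$. Formally applying $D^\alpha$ to \eqref{eq:a1}, multiplying through by $\psi^k$, and reorganizing the principal part via the identity
\[
\psi^k D_{ij}D^\alpha p = D_{ij}\tilde p - k\psi^{k-1}\bigl((D_i\psi)D_j D^\alpha p+(D_j\psi)D_i D^\alpha p\bigr) - k\bigl((k-1)\psi^{k-2}(D_i\psi)(D_j\psi)+\psi^{k-1}D_{ij}\psi\bigr)D^\alpha p
\]
(and analogous first-order rewritings for $b^i D_i p$ and $\sigma^{i\ell}D_i q^\ell$) shows that $(\tilde p,\tilde q)$ should satisfy a BSPDE of the form \eqref{eq:a1} with unchanged principal coefficients $a,\sigma$ (hence unchanged super-parabolicity and modulus of continuity), terminal value $\tilde\phi:=\psi^k D^\alpha\phi\in L^2(\Omega,\sF_T,H^1_0(\cD))$ by Assumption \ref{ass:c5}, and zero Dirichlet boundary condition inherited from $\psi|_{\partial\cD}=0$. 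The new free term $\tilde F$ splits into (a) $\psi^k D^\alpha F$, (b) Leibniz cross-products $(\psi^{|\beta|}D^\beta\chi)(\psi^{k-|\beta|}D^\gamma u)$ with $\chi\in\{a,b,c,\sigma,\nu\}$, $u\in\{p,q\}$ and $|\gamma|$ strictly less than the leading order, and (c) correction terms from the principal-part identity above. Once $\tilde F\in\bH^0(\cD)$ is shown with a bound by the right-hand side of \eqref{leq:c2}, Theorem \ref{thm:c1} applied to the BSPDE for $(\tilde p,\tilde q)$ delivers \eqref{prp:c2}--\eqref{leq:c2} at order $k$, closing the induction.

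The hardest part is the control of $\tilde F$, because several correction terms in (c) and some Leibniz cross-products in (b) carry powers of $\psi$ strictly smaller than the order of the derivatives they multiply, so a naive comparison with the inductive hypothesis fails. My plan is to rewrite them using relations such as
\[
\psi^k D_i D^\alpha p = D_i\tilde p - k\psi^{k-1}(D_i\psi)D^\alpha p \qquad\text{and}\qquad \psi^{k-2}D^\alpha p = \psi^{-2}\tilde p,
\]
and either absorb the resulting factors into modified first- and zero-order coefficients of the equation (paired with $\tilde p$ or $D\tilde p$ that vanishes on $\partial\cD$, so that the structural conditions needed by Theorem \ref{thm:c1} are preserved after the modifications), or control them directly in $\bH^0(\cD)$ by Hardy's inequality applied to $\tilde p\in H^1_0(\cD)$. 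A secondary obstacle is the rigorous justification of the formal differentiation at the regularity supplied by the inductive hypothesis; I would treat this by mollifying the coefficients and data in $x$ (after local straightening via the charts of Assumption \ref{ass:c1} and a partition of unity), running the argument on the resulting classical solutions produced by Theorem \ref{thm:c1}, establishing the estimate uniformly in the mollification parameter, and passing to the limit using uniqueness from Theorem \ref{thm:c1}. Verifying that all the rewritten cross and correction terms combine into a single $\bH^0$-bound matching the right-hand side of \eqref{leq:c2} is the main technical effort.
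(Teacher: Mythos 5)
Your overall strategy is the same as the paper's: induct on $|\alpha|$, show that $(\psi^{|\alpha|}D^{\alpha}p,\psi^{|\alpha|}D^{\alpha}q)$ solves a BSPDE of the form \eqref{eq:a1} with the \emph{same} principal coefficients $a,\sigma$, zero Dirichlet data, terminal value $\psi^{|\alpha|}D^{\alpha}\phi$, and a new free term collecting the Leibniz and commutator contributions; bound that free term via the inductive hypothesis and Assumption \ref{ass:c5}; apply Theorem \ref{thm:c1}; and finally remove the differentiability of $a,\sigma$ by mollifying and passing to the limit using the uniform estimate. The paper justifies the formal differentiation not through classical solutions but by testing the weak formulation of Proposition \ref{prop:b1} against $D^{\alpha}(\psi^{m}\eta)$ and integrating by parts (which is why it temporarily assumes $\psi\in C^{n+2}$), and it verifies $\psi^{m}D^{\alpha}p\in\bH^{1}_{0}(\cD)$ via Lemma \ref{lem:c1} rather than by simply invoking $\psi|_{\partial\cD}=0$; these are lighter-weight versions of what you sketch.

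The one place where you diverge substantively is the point you yourself identify as hardest, and there your diagnosis and your proposed fix are both off. The correction terms in \eqref{eq:c3}, e.g.\ $\psi^{m-1}D_{i}\psi\, D^{\alpha}p_{x^{j}}$ and $\psi^{m-2}D_{i}\psi D_{j}\psi\, D^{\alpha}p$ with $|\alpha|=m$, are of the form (power $\psi^{m-\ell}$) times (derivative of $p$ of order $m+2-\ell$), and these \emph{are} controlled by the inductive hypothesis, provided you use it at \emph{all} orders $|\beta|\le m-1$ and not only at the top order: writing $D^{\alpha}p_{x^j}=D^{\beta}D_{ij}p$ with $|\beta|=m-1$ and expanding $D_{ij}(\psi^{m-1}D^{\beta}p)$, respectively $D^{\alpha}p=D^{\beta'}D_{ij}p$ with $|\beta'|=m-2$, a finite secondary induction on the order (using only $\psi\in C^{2}_b$) gives $\psi^{m-1}D^{\beta}p_{xx}\in\bH^{0}$ and $\psi^{m-2}D^{\beta'}p_{xx}\in\bH^{0}$ with the right bounds; this is exactly the display \eqref{prp:c1} inside the paper's proof. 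So no absorption into the coefficients and no Hardy inequality are needed. Moreover, your fallback via Hardy applied to $\psi^{-2}\tilde p$ would not work as stated: Hardy's inequality for $\tilde p\in H^{1}_{0}(\cD)$ controls one inverse power of the distance to $\partial\cD$, not two, and $\psi$ is only assumed nonnegative and vanishing on $\partial\cD$, so it need not be bounded below by the boundary distance and may vanish at interior points, making $\psi^{-2}\tilde p$ uncontrollable in general. Replace that step by the direct use of the full inductive hypothesis and your argument matches the paper's.
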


We need the following lemma, which can be found in \cite{Kryl94}.

\begin{lem}\label{lem:c1}
  If both $v$  and $\psi Dv$ lie in $L^{2}(\mathcal{D})$, then $\psi
  v\in H^{1}_{0}(\mathcal{D})$.
\end{lem}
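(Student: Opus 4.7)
The plan is to first show that $\psi v \in H^1(\cD)$ via the product rule applied distributionally, and then to approximate $\psi v$ in $H^1$-norm by functions supported away from $\partial\cD$, which forces the limit into $H^1_0(\cD)$.

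For the first step, a short calculation against test functions in $C^\infty_c(\cD)$ gives the distributional identity $D(\psi v) = \psi Dv + (D\psi)\,v$. The first summand is an $L^2(\cD)$ function by hypothesis, and the second is in $L^2(\cD)$ because $D\psi$ is bounded ($\psi \in C^2_b(\overline{\cD})$) and $v \in L^2(\cD)$. Since $\psi$ is bounded we also have $\psi v \in L^2(\cD)$, hence $\psi v \in H^1(\cD)$.

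Next I would construct a family of cutoffs vanishing near the boundary: set $\zeta_\eps(x) = \chi(d(x)/\eps)$, where $d(x) = \mathrm{dist}(x, \partial\cD)$ and $\chi:\bR \to [0,1]$ is smooth with $\chi \equiv 0$ on $(-\infty,1]$, $\chi \equiv 1$ on $[2,\infty)$, and $|\chi'|\leq 2$. Then $\zeta_\eps \psi v \in H^1(\cD)$ has support contained in $\{x: d(x) \geq \eps\}$, so it belongs to $H^1_0(\cD)$ by a standard mollification (preceded by a truncation at infinity if $\cD$ is unbounded). It would remain to prove that $\zeta_\eps \psi v \to \psi v$ in $H^1(\cD)$ as $\eps \to 0$, and then to invoke the closedness of $H^1_0(\cD)$ in $H^1(\cD)$.

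The main obstacle is controlling the gradient term $(D\zeta_\eps)\,\psi v$. On the support of $D\zeta_\eps$, namely $\{\eps \leq d \leq 2\eps\}$, one has $\psi(x) \leq \|D\psi\|_\infty\, d(x) \leq 2\|D\psi\|_\infty\,\eps$, since $\psi \in C^2_b(\overline{\cD})$ vanishes on $\partial\cD$ and is therefore Lipschitz. Combined with $|D\zeta_\eps| \leq 2/\eps$, this yields the uniform pointwise bound
\[
\bigl|(D\zeta_\eps)\,\psi v\bigr| \leq 4\,\|D\psi\|_\infty\,|v|
\]
with $|v|\in L^2(\cD)$ independent of $\eps$. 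Moreover, $D\zeta_\eps(x)\to 0$ for every interior $x$, so dominated convergence gives $(D\zeta_\eps)\,\psi v \to 0$ in $L^2(\cD)$. The remaining terms $\zeta_\eps D(\psi v) \to D(\psi v)$ and $\zeta_\eps \psi v \to \psi v$ in $L^2(\cD)$ follow similarly, establishing the convergence $\zeta_\eps \psi v \to \psi v$ in $H^1(\cD)$ and hence the lemma.
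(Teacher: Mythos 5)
Your proof is correct and takes essentially the same route as the paper's: both rest on the bound $|\psi|\le C\,\mathrm{dist}(\cdot,\partial\cD)$ (valid since $\psi\in C^2_b(\overline{\cD})$ vanishes on $\partial\cD$) to obtain a uniform bound on $\psi\,D\zeta_\eps$ for a boundary-layer cutoff, so that the commutator term is controlled by the $L^2$-norm of $v$ over a shrinking collar. The only difference is cosmetic: the paper builds its cutoffs as mollified indicators of $\{\mathrm{dist}(\cdot,\partial\cD)\ge 4\cdot 2^{-n}\}$ rather than your $\chi(d(x)/\eps)$.
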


\begin{proof}
  Define $\mathcal{K}_{n} = \{x\in\cD: \textrm{dist}(x,\partial\mathcal{D})\geq
  4\cdot 2^{-n}\}.$ Take a nonnegative function $\zeta\in C^{\infty}_{0}(\bR^{d})$
  such that $\textrm{supp}\zeta \subset B_{1}(0),~\int_{\bR^{d}}\zeta = 1$.
  Define
  $\zeta_{n}(x)=2^{nd}\zeta(2^{n}x)$ and $\eta_{n}=\zeta_{n}*1_{\mathcal{K}_{n}}$.
  We have $\textrm{supp}(\eta_{n})\subset \mathcal{K}_{n+1}$ and
  $\eta_{n}|_{\mathcal{K}_{n-1}}=1$. Since $\psi\in C^{2}_b(\overline{\cD})$,
  we have $|\psi(x)|\leq C \textrm{dist}(x,\partial \cD).$
  It is not hard to show that $|\eta_{n}|\leq 1,~|\psi D\eta_{n}|\leq C,$ and
  $\eta_{n}\psi v\in H^{1}_{0}(\cD)$. Then we can get that both
  $\eta_{n}\psi v\rightarrow \psi v$ and $\eta_{n}D(\psi v)\rightarrow D(\psi v)$
  strongly in $L^{2}(\cD)$ as $n\rightarrow \infty$, and moreover
  \begin{eqnarray*}
    \begin{split}
      \int_{\cD}|D(\eta_{n}\psi v) - D(\psi v) |^{2}
      &\leq 2\int_{\cD}|\psi D\eta_{n}|^{2}|v|^{2}
      + 2\int_{\cD} |\eta_{n}D(\psi v) - D(\psi v) |^{2}\\
      &\leq C \int_{\cD\setminus\mathcal{K}_{n-1}}|v|^{2}
      + 2\int_{\cD} |\eta_{n}D(\psi v) - D(\psi v) |^{2}\\
      &\rightarrow 0 \quad \textrm{as } n\rightarrow \infty.
    \end{split}
  \end{eqnarray*}
  Hence $\psi v\in H^{1}_{0}(\mathcal{D})$.
\end{proof}

\begin{proof}[Proof of Theorem \ref{thm:c2}]
  The proof consists of two steps. We suppose for the moment
  that $\psi\in C^{n+2}(\overline{\cD})$,
  which will  finally be dispensed with.

  \emph{Step 1.} We first prove that Theorem  \ref{thm:c2} is true for the
  case where the leading coefficients $a$ and $\sigma$ are differentiable
  in the space variable $x$ with the gradients $a_{x}$ and
  $\sigma_{x}$ being bounded and thus equation \eqref{eq:a1} can be written
  into the divergence form \eqref{eq:a2}.

  We use the induction.
  Theorem \ref{thm:c1} shows that Theorem \ref{thm:c2} is true
  for the case of $n=0$.
  Assume that it is true for the case of $n=m-1~(m\geq 1)$.
  It is sufficient for us to show that it is true for the case of $n=m$.

  We assert that $(\psi^{m}D^{\alpha}p,\psi^{m}D^{\alpha}q)\in \bH^{1}_{0}(\cD)
  \times \bH^{0}(\cD;\bR^{d_1})$ for any multi-index $\alpha$ s.t. $|\alpha|=
  m$.
  Indeed, we know from our assumption that for any multi-index $\beta$ s.t.
  $|\beta|\leq m-1$,
  \[
    \psi^{m-1}D^{\beta}p\in \bH^{2}(\cD),~~\psi^{m-1}D^{\beta}q\in
    \bH^{1}(\cD;\bR^{d_1}).
  \]
  Keeping in mind that $\psi\in C^{2}_b(\overline{\cD})$,  we can
  easily get by induction that
  \begin{equation}\label{prp:c1}
    \psi^{m-1}D^{\beta}p_x \in \bH^{1}(\cD),
    \quad \psi^{m-1}D^{\beta}p_{xx}\in \bH^{0}(\cD),\quad \textrm{and }
    \psi^{m-1}D^{\beta}q_x\in \bH^{0}(\cD;\bR^{d_1}).
  \end{equation}
  Then we have
  \[
    \psi^{m-1}D^{\alpha}p
    \in \bH^{1}(\cD)\subset \bH^{0}(\cD),\quad
    \psi D(\psi^{m-1}D^{\alpha}p)
    \in \bH^{0}(\cD),\quad\textrm{and }
    \psi^{m}D^{\alpha}q\in \bH^{0}(\cD;\bR^{d_1}).
  \]
  In view of Lemma \ref{lem:c1}, we have from the first two relations that
  $\psi^{m}D^{\alpha}p\in \bH^{1}_{0}(\cD).$

  Take any $\eta\in C^{\infty}_{0}(\cD)$. Since $\psi\in
  C^{n+2}_b(\cD)$, we know that
  $D^{\alpha}(\psi^{m}\eta)\in H^{1}_{0}(\cD)$.
  From Proposition \ref{prop:b1}, we have
  \begin{eqnarray*}
    \begin{split}
      &\int_{\mathcal{D}}p(t,\cdot)D^{\alpha}(\psi^{m}\eta) dx
      - \int_{\mathcal{D}}\phi D^{\alpha}(\psi^{m}\eta) dx\\
      & = \int_{t}^{T}\int_{\mathcal{D}}
      \Big[ a^{ij}D_{ij}p+b^{i}D_{i}p - c p
      +\sigma^{ik}D_{i}q^{k}+\nu^k q^k
      +F\Big]D^{\alpha}(\psi^{m}\eta)dx dt\\
      &~~~ -\int_{t}^{T}\int_{\mathcal{D}}q^k D^{\alpha}(\psi^{m}\eta)
      dx dW^k_t,
      ~~~~\textrm{a.e.}~(\omega,t)\in \Omega\times [0,T].
    \end{split}
  \end{eqnarray*}
  Using the integration by parts formula, we show that the function pair
  $(\psi^{m}D^{\alpha}p,\psi^{m}D^{\alpha}q)\in \bH^{1}_{0}(\cD)
  \times \bH^{0}(\cD;\bR^{d_1})$ is a weak solution of the
  following BSPDE
  \begin{equation}\label{eq:c2}
    \left\{\begin{array}{l}
      d u = -\big[ a^{ij}D_{ij}u + \sigma^{ik}D_{i}v^k + \widetilde{F}\big]dt +
      v^k
      dW^k_{t},\\
      u(t,x)=0,~~~~~~ t\in [0,T],~~ x\in\partial \mathcal{D},\\
      u(T,x)=(\psi^{m}D^{\alpha}\phi)(x),~~~~ x\in \mathcal{D}
      \end{array}\right.
  \end{equation}
  with $u$ and $v$ being the unknown functions. Here
  \begin{eqnarray}\label{eq:c3}
    \begin{split}
      \widetilde{F}=& \sum_{\beta+\gamma=\alpha,|\beta|\geq 1}\Big[
      \big( \psi^{|\beta|}D^{\beta}a^{ij} \big)
      \big( \psi^{|\gamma|}D^{\gamma}p_{x^{i}x^{j}} \big) +
      \big( \psi^{|\beta|}D^{\beta}\sigma^{ik} \big)
      \big( \psi^{|\gamma|}D^{\gamma}q^k_{x^{i}} \big) \Big]\\
      &+ \sum_{\beta+\gamma=\alpha}\Big[
      \big( \psi^{|\beta|}D^{\beta}b^{i} \big)
      \big( \psi^{|\gamma|}D^{\gamma}p_{x^{i}} \big)
      -\big( \psi^{|\beta|}D^{\beta}c \big)
      \big( \psi^{|\gamma|}D^{\gamma}p \big)\Big]\\
      &- 2m a^{ij}\psi^{m-1} D_{i}\psi D^{\alpha}p_{x^j} -
      m a^{ij}\psi^{m-1} D_{ij}\psi D^{\alpha}p - m(m-1) a^{ij}\psi^{m-2}
      D_{i}\psi D_{j}\psi D^{\alpha}p\\
      & + \sum_{\beta+\gamma=\alpha} \big( \psi^{|\beta|}D^{\beta}\nu^k \big)
      \big( \psi^{|\gamma|}D^{\gamma}q^k \big)
      -m\sigma^{ik} \psi^{m-1}D_{i}\psi D^{\alpha}q^k
      + \psi^{m}D^{\alpha}F.
    \end{split}
  \end{eqnarray}
  From \eqref{prp:c1} and Assumption \ref{ass:c5}, we see that
  $\widetilde{F}\in \bH^{0}(\cD).$
  Moreover, from estimate \eqref{leq:c2} for $n=m-1$ (as a consequence
  of the induction assumption), we have
  \begin{eqnarray*}\begin{split}
    \intl \widetilde{F}\intl_{0,\cD}^{2}~
    \leq ~& C~ \bigg[ \sum_{|\beta|\leq m-1}\Big(\intl
    \psi^{|\beta|}D^{\beta}p\intl_{2,\cD}^{2}
    +\intl \psi^{|\beta|}D^{\beta}q\intl_{1,\cD}^{2} \Big)
    +\intl \psi^{m}D^{\alpha}F \intl_{0,\cD}^{2}\bigg]\\
    \leq ~& C~ \bigg[ \sum_{|\beta|\leq m-1}\Big( \intl \psi^{|\beta|}
    D^{\beta}F \intl_{0,\cD}^{2}
    + E\|\psi^{|\beta|}D^{\beta}\phi\|_{1,\cD}^{2}\Big)
    +\intl \psi^{m}D^{\alpha}F \intl_{0,\cD}^{2}\bigg],
  \end{split}\end{eqnarray*}
  where the constant $C$ only depends on the norm of $\psi$ in
  $C^{2}(\overline{\cD})$, the constants $K,\rho_{0},\kappa$ and $T$, and the function
  $\gamma$.
  Note that $\psi^{m}D^{\alpha}\phi\in \bH^{1}_{0}(\cD)$.
  Therefore, applying Theorem \ref{thm:c1} to BSPDE \eqref{eq:c2}, we
  have Theorem \ref{thm:c2} for $n=m$.
  \medskip

  \emph{Step 2.}  Now we remove the boundedness assumption
  on $a_{x}$ and $\sigma_x$ made in Step 1.

  In view of Theorem \ref{thm:c1}, BSPDE \eqref{eq:a1} and \eqref{con:a1}
  has a unique strong solution $(p,q)$.
  Due to Assumption \ref{ass:c3}, we can construct (e.g., by the
  standard technique of mollification) two sequences $a_{r}$ and $\sigma_r$
  with their first-order derivatives in $x$ being bounded, which converge uniformly (w.r.t.
  $(\omega,t,x)$) to $a$ and $\sigma$, respectively, such that all $a_{r}$ and $\sigma_{r}$
  satisfy  assumptions \ref{ass:c2}, \ref{ass:c3}, and \ref{ass:c5},
  with $\kappa$ in assumption~\ref{ass:c3} being replaced with $\kappa^{2}$.
  Then from Theorem \ref{thm:c1}, the following equation (for each $n$)
  \begin{equation*}\left\{\begin{array}{l}
    dp_{r}=-\big(a_{r}^{ij}D_{ij}p_{r}+b^{i}D_{i}p_{r}-cp_{r}
    +\sigma_{r}^{ik}D_{i}q_{r}^{k}+\nu^{k}q_{r}^{k}+F\big) dt
    +q_{r}^{k}dW_{t}^{k},\\
    p_{r}|_{x\in\ptl\cD}=0,~~~~~~
    p_{r}|_{t=T}=\phi \end{array}\right.
  \end{equation*}
  has a unique strong solution $(p_{r},q_{r})\in \cH^{2,1}_{0}(\cD)$, which
  satisfies estimate \eqref{leq:c1} with the constant $C$ being independent of $r$.
  Then we can check that $\{(p_{r},q_{r})\}$ is a Cauchy sequence in
  the space $\cH^{2,1}_{0}(\cD)$, whose limit is $(p,q)$. Similarly, we have
  that $\{(\psi^{|\alpha|}D^{\alpha}p_{r},\psi^{|\alpha|}D^{\alpha}q_{r})\}$
  ($|\alpha|\leq n$) is also a
  Cauchy sequence in the space $\cH^{2,1}_{0}(\cD)$, whose limit is
  denoted by $(u_{\alpha},v_{\alpha})$. Evidently, we have that
  $u_{\alpha}\in C([0,T],L^{2}(\cD))\cap L^{\infty}([0,T],H^{1}(\cD))$, and
  \begin{equation}\label{leq:c5}
    \| (u_{\alpha}, v_{\alpha})\|_{\cH^{2,1}(\cD)}^{2}
    + E\sup_{0\leq t\leq T}\|u_{\alpha}(t,\cdot)\|_{1,\cD}^{2}
    \leq C \sum_{|\beta|\leq |\alpha|}\bigg( \intl \psi^{|\beta|}
    D^{\beta}F \intl_{0,\cD}^{2}
    + E\|\psi^{|\beta|}D^{\beta}\phi\|_{1,\cD}^{2}\bigg).
  \end{equation}
  On the other hand, for every $\eta\in
  C_{0}^{\infty}(\cD)$ and a.e. $(\omega,t)$, we
  have ($|\alpha|\leq n$)
  \begin{equation*}
    \la \psi^{|\alpha|}D^{\alpha}p_{r}, \eta \ra
    = (-1)^{|\alpha|}\la p_{r},D^{\alpha}(\psi^{|\alpha|}\eta)\ra
    \rrow (-1)^{|\alpha|}\la p,D^{\alpha}(\psi^{|\alpha|}\eta)\ra
    =\la \psi^{|\alpha|}D^{\alpha}p, \eta \ra,
  \end{equation*}
  where we denote $\la u,v \ra=\int_{\cD}u(x)v(x)dx$. Thus
  $\psi^{|\alpha|}D^{\alpha}p=u_{\alpha}$. Similarly, we have
  $\psi^{|\alpha|}D^{\alpha}q=v_{\alpha}$. Estimate
  \eqref{leq:c2} is derived from inequality \eqref{leq:c5}.\medskip

  It remains to remove the additional assumption made
  at the beginning that $\psi\in
  C^{n+2}_b(\overline{\cD})$. Note that the constant $C$ in our estimate only depends
  on $|\psi|_{C^2}$ (and other parameters), which allows us to
  approximate $\psi$ in $C^{2}_b(\overline{\cD})$ by a sequence of nonnegative
  $C^{n+2}_b(\overline{\cD})$-functions vanishing on the boundary.
  Then in view of Lebesgue's dominated convergence theorem, the proof is complete.
\end{proof}

By choosing a proper weighting function $\psi$, we obtain the
following interior spacial regularity for the strong solution of
BSPDE \eqref{eq:a1} and \eqref{con:a1}.

\begin{cor}
  Let Assumptions \ref{ass:c1} and \ref{ass:c2} be satisfied.
  In addition, suppose that
  \begin{eqnarray}\label{con:c2}\begin{split}
    \sum_{|\alpha|\leq n}\big(|D^{\alpha}a^{ij}|+|D^{\alpha}b^i|+|D^{\alpha}c|
    +|D^{\alpha}\sigma^{ik}|+|D^{\alpha}\nu^k|\big)\leq K,\\
    F\in  \bH^{n}(\cD),~~~\phi\in L^{2}(\Omega,\sF_{T},H^{1}_{0}(\cD)\cap
    H^{n+1}(\cD)).~~~~
  \end{split}\end{eqnarray}
  Here the integer $n\geq 1$.
  Then BSPDE \eqref{eq:a1} and \eqref{con:a1} has a unique
  strong solution $(p,q)\in\cH^{2,1}_{0}(\cD)$ such that

    \emph{(i)} the functions $p$ and $q$ satisfy \eqref{prp:c1} and \eqref{leq:c1};

    \emph{(ii)} $p\in\bH^{n+2}_{loc}(\cD),
     ~q\in\bH^{n+1}_{loc}(\cD;\bR^{d_1}),
     ~p\in C([0,T],H^{n}_{loc}(\cD))(a.s.),$
     i.e., for any domain $\cD'\subset\subset\cD$, we have
     \begin{eqnarray}\label{leq:c3}
        \begin{split}
          &\intl p\intl_{n+2,\cD'}^{2}+\intl q\intl_{n+1,\cD'}^{2}
          + E\sup_{0\leq t\leq T}\|p(t,\cdot)\|_{n+1,\cD'}^{2}\\
          &~~~\leq C \sum_{m=-1}^{n}(\rho\wedge 1)^{-2(n-m)}
          \big( \intl F \intl_{m,\cD}^{2} + E\|\phi\|_{m+1,\cD}^{2}\big),
       \end{split}
    \end{eqnarray}
    with $\rho=\emph{dist}(\mathcal{D}',\partial\mathcal{D})$ and with the
    constant $C=C(K,\rho_{0},\kappa,T)$;

    \emph{(iii)} moreover, if $n-d/2>2$, then
    \begin{eqnarray*}
      &&p\in L^{2}(\Omega \times (0,T),\sP,C^{2}(\cD'))\cap
      L^{2}(\Omega, C([0,T]\times\cD')),\\
      &&q\in L^{2}(\Omega \times (0,T),\sP,C^{1}(\cD';\bR^{d_1})),
    \end{eqnarray*}
    for any domain $\cD'\subset\subset\cD$.
\end{cor}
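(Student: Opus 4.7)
The plan is to deduce (i) directly from Theorem \ref{thm:c1}, to obtain (ii) from Theorem \ref{thm:c2} applied to a carefully chosen cutoff-type weight $\psi$, and to reduce (iii) to (ii) by standard Sobolev embeddings. First, hypothesis \eqref{con:c2} with $n\geq 1$ forces $a$ and $\sigma$ to be Lipschitz in $x$ uniformly in $(\omega,t)$, so Assumption \ref{ass:c3} holds with $\gamma(r)=Kr$. Since $F\in\bH^n(\cD)\subset\bH^0(\cD)$ and $\phi\in L^2(\Omega,\sF_T,H^1_0(\cD)\cap H^{n+1}(\cD))\subset L^2(\Omega,\sF_T,H^1_0(\cD))$, Theorem \ref{thm:c1} yields the unique strong solution $(p,q)\in\cH^{2,1}_0(\cD)$ together with the regularity \eqref{prp:c1} and the estimate \eqref{leq:c1}, which is exactly (i).

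For (ii), fix $\cD'\subset\subset\cD$ with $\rho=\mathrm{dist}(\cD',\partial\cD)$, and build a nonnegative $\psi\in C^2_b(\overline\cD)$ with $\psi=0$ on $\partial\cD$, $0\leq\psi\leq 1$, and $\psi\geq c(\rho\wedge 1)$ on $\cD'$, where $c$ is an absolute constant and $|\psi|_{C^2(\overline\cD)}$ is bounded independently of $\rho$. A concrete choice is $\psi=f(d_\cD)$, where $d_\cD$ is the (smoothed) distance to $\partial\cD$ (which is $C^2$ near $\partial\cD$ since $\cD\in C^2$) and $f\in C^\infty([0,\infty))$ satisfies $f(0)=0$, $f(s)=s$ for $s\leq 1/2$, and $f(s)\equiv 1$ for $s\geq 1$. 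Under \eqref{con:c2} Assumption \ref{ass:c5} holds trivially for this $\psi$ (since $\psi\leq 1$); Lemma \ref{lem:c1} gives $\psi^{|\alpha|}D^\alpha\phi\in L^2(\Omega,\sF_T,H^1_0(\cD))$ for $|\alpha|\leq n$; and clearly $\psi^{|\alpha|}D^\alpha F\in\bH^0(\cD)$. Theorem \ref{thm:c2} then bounds, for every $|\alpha|\leq n$, the quantity $\intl\psi^{|\alpha|}D^\alpha p\intl_{2,\cD}^2+\intl\psi^{|\alpha|}D^\alpha q\intl_{1,\cD}^2+E\sup_t\|\psi^{|\alpha|}D^\alpha p(t)\|_{1,\cD}^2$ in terms of the weighted RHS. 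On $\cD'$ the weight $\psi$ is bounded below by $c(\rho\wedge 1)$, so expanding $\|\psi^{|\alpha|}D^\alpha p\|_{H^2(\cD')}^2$ by the Leibniz rule isolates a leading term of order $(\rho\wedge 1)^{2|\alpha|}\|D^{|\alpha|+2}p\|_{L^2(\cD')}^2$, while the sub-leading terms (involving $D\psi$ or $D^2\psi$, each of order one) can be absorbed by the estimates obtained for smaller $|\alpha|$. Running this induction from $|\alpha|=0$ up to $|\alpha|=n$ and reindexing the RHS by $m=|\beta|$ produces the bound \eqref{leq:c3}. The time continuity $p\in C([0,T],H^n_{loc}(\cD))$ follows because on $\cD'$ each $\psi^{|\alpha|}D^\alpha p$ coincides up to a bounded factor with $D^\alpha p$ itself and lies in $C([0,T],L^2(\cD))$ by Theorem \ref{thm:c2}.

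For (iii), the hypothesis $n-d/2>2$ implies both $H^{n+2}(\cD')\hookrightarrow C^2(\overline{\cD'})$ and $H^{n+1}(\cD')\hookrightarrow C^1(\overline{\cD'})$, so (ii) directly gives $p\in L^2(\Omega\times(0,T),\sP,C^2(\cD'))$ and $q\in L^2(\Omega\times(0,T),\sP,C^1(\cD';\bR^{d_1}))$. The joint continuity statement $p\in L^2(\Omega,C([0,T]\times\cD'))$ follows from the almost-sure continuity of $t\mapsto p(t,\cdot)\in H^{n+1}(\cD')\hookrightarrow C(\overline{\cD'})$ together with the finiteness of $E\sup_t\|p(t,\cdot)\|_{n+1,\cD'}^2$ supplied by \eqref{leq:c3}.

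The main obstacle will be the bookkeeping behind the precise factor $(\rho\wedge 1)^{-2(n-m)}$ in \eqref{leq:c3}. A crude application of Theorem \ref{thm:c2} with $|\alpha|=n$ yields only the uniform scaling $(\rho\wedge 1)^{-2n}$ multiplying the full sum $\sum_{m\leq n}\intl F\intl_{m,\cD}^2$; to recover the sharper $m$-dependent weights one must apply Theorem \ref{thm:c2} at every intermediate $|\alpha|=k\leq n$, carefully identify which Leibniz terms in $\|\psi^k D^\alpha p\|_{H^2(\cD)}^2$ carry which power of $\psi$ on $\cD'$, and absorb the sub-leading contributions using the estimates already proved at levels $<k$. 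A secondary but important point is verifying that $|\psi|_{C^2(\overline\cD)}$ can be bounded independently of $\rho$ in the construction above, so that the constant $C$ in \eqref{leq:c3} is a genuine constant depending only on $K,\rho_0,\kappa,T$.
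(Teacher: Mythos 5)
Your overall strategy for (i) and (iii) matches the paper's and is fine, but there is a genuine gap in (ii): your choice of weight cannot produce the $m$-dependent factors $(\rho\wedge 1)^{-2(n-m)}$ in \eqref{leq:c3}. With $\psi=f(d_{\cD})$ capped at height $1$, the source terms on the right-hand side of the Theorem \ref{thm:c2} estimate satisfy only $\intl\psi^{|\beta|}D^{\beta}F\intl_{0,\cD}^{2}\le\intl F\intl_{|\beta|,\cD}^{2}$, with no smallness in $\rho$; after dividing by $\psi^{2n}\ge c(\rho\wedge 1)^{2n}$ on $\cD'$ you get the uniform factor $(\rho\wedge 1)^{-2n}$ in front of \emph{every} term, including $m=n$, whereas \eqref{leq:c3} requires the $m=n$ term to carry no negative power of $\rho$ at all. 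Your proposed fix --- running the induction at each intermediate level $k$ and absorbing the Leibniz terms --- does not repair this, because the loss occurs already in the leading source term $\psi^{k}D^{\alpha}F$ at each level, not in the sub-leading commutator terms. The missing idea is that the weight itself must be of size $O(\rho)$ globally: the paper takes $\psi=(\rho/2)\,\zeta*1_{\mathcal{K}}$ with $\mathcal{K}=\{\mathrm{dist}(x,\partial\cD)\ge\rho/2\}$, so that $|\psi|\le\rho/2$ everywhere, $\psi\equiv\rho/2$ on $\cD'$, $|D\psi|\le C$ and $|D^{2}\psi|\le C\rho^{-1}$. Then $\intl\psi^{|\beta|}D^{\beta}F\intl_{0,\cD}^{2}\le C\rho^{2|\beta|}\intl F\intl_{|\beta|,\cD}^{2}$, and the final multiplication by $\rho^{-2n}$ yields exactly $\rho^{-2(n-m)}$.

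Note also that your ``secondary point'' --- arranging $|\psi|_{C^{2}(\overline\cD)}$ to be bounded independently of $\rho$ so that the constant of Theorem \ref{thm:c2} can be quoted directly --- is optimizing for the wrong thing. With the correct ($O(\rho)$-sized) weight one has $|D^{2}\psi|\sim\rho^{-1}$, so the constant of Theorem \ref{thm:c2} is \emph{not} uniform in $\rho$ and cannot be cited as a black box; instead one must return to the explicit expression \eqref{eq:c3} for $\widetilde F$ and check that every occurrence of $D\psi$ or $D^{2}\psi$ is accompanied by enough powers of $\psi$ (e.g., $\psi^{m-1}D_{ij}\psi=O(\rho^{m-2})$) that, after the final rescaling, these commutator terms are controlled by the estimates at levels $\le m-1$ (the $m=1$ step producing the extra $\rho^{-2}\intl D_{x}p\intl_{0,\cD}^{2}$ term, which accounts for the $m=-1$ summand in \eqref{leq:c3}). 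This explicit bookkeeping is the content of the paper's induction and is what your proposal is missing.
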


\begin{proof}
  In view of Theorem \ref{thm:c1}, it remains to prove the assertions (ii)
  and (iii).

  Without loss of generality, we suppose $\rho\leq 1$.
  Define
  $$\mathcal{K} = \{x\in\cD: \textrm{dist}(x,\partial\mathcal{D})\geq
  \rho/2\}.$$
  It is clear that $\cD'\subset \mathcal{K}$.
  Take a nonnegative function $\zeta\in C^{\infty}_{0}(\bR^{d})$ such that
  \begin{eqnarray*}
    \begin{split}
      \textrm{supp}\zeta \subset B_{\frac{\rho}{2}}(0),~~~\int_{\bR^{d}}\zeta = 1,
      ~~~|D\zeta| \leq C \rho^{-1},~~~|D^2\zeta| \leq C \rho^{-2}.
    \end{split}
  \end{eqnarray*}
  Define $\psi = (\rho/2)\zeta * 1_{\mathcal{K}}$. It is not hard to show
  that $\psi$ is a well defined weight function for Theorem \ref{ass:c2}, and
  moreover
  \begin{equation}\label{con:c1}
    |\psi|\leq \frac{\rho}{2},~~~~
    |D\psi|\leq C,~~~~|D^2\psi|\leq C\rho^{-1},~~~~
    \psi\big|_{\cD}=\frac{\rho}{2}.
  \end{equation}

  In view of \eqref{eq:c3} and keeping in mind \eqref{con:c1},
  we show that for $n=1$,
  \begin{eqnarray*}
    \begin{split}
      &\intl \psi D p \intl_{2,\cD}^{2}
      +\intl \psi D q\intl_{1,\cD}^{2}
      + E\sup_{0\leq t\leq T}\|\psi D p(t,\cdot)\|_{1,\cD}^{2}\\
      &\leq~ C \big( \rho^{-2}\intl D_{x} p\intl_{0,\cD}^{2}
      +\intl p\intl_{2,\cD}^{2}
      +\intl q\intl_{1,\cD}^{2}
      + \intl \psi D F\intl_{0,\cD}^{2}
      +\intl  \psi D\phi \intl_{1,\cD}^{2}\big),
    \end{split}
  \end{eqnarray*}
  and for $2\leq m\leq n$ and
  multi-index $\alpha$ s.t. $|\alpha|=m$,
  \begin{eqnarray*}
    \begin{split}
      \intl \psi^{m} & D^{\alpha}p\intl_{2,\cD}^{2}
      +\intl \psi^{m}D^{\alpha}q\intl_{1,\cD}^{2}
      + E\sup_{0\leq t\leq T}\|\psi^{m}D^{\alpha}p(t,\cdot)\|_{1,\cD}^{2}\\
      \leq~& C\bigg[ \sum_{|\beta|\leq m-1}\big(\intl \psi^{|\beta|}D^{\beta} p
      \intl_{2,\cD}^{2} +\intl \psi^{|\beta|}D^{\beta} q
      \intl_{1,\cD}^{2}\big)\\
      &~~~+\intl \psi^{m}
      D^{\alpha}F \intl_{0,\cD}^{2}
      + E\|\psi^{m}D^{\alpha}\phi\|_{1,\cD}^{2}\bigg].
    \end{split}
  \end{eqnarray*}
  By induction, we have (here $|\alpha|=n$)
  \begin{eqnarray*}
    \begin{split}
      \intl \psi^{n} & D^{\alpha}p\intl_{2,\cD}^{2}
      +\intl \psi^{n}D^{\alpha}q\intl_{1,\cD}^{2}
      + E\sup_{0\leq t\leq T}\|\psi^{n}D^{\alpha}p(t,\cdot)\|_{1,\cD}^{2}\\
      \leq~& C  \rho^{-2}\intl D_{x} p\intl_{0,\cD}^{2}
      +C\sum_{|\beta|\leq n}\big(\intl \psi^{|\beta|}D^{\beta} F
      \intl_{0,\cD}^{2} +\intl \psi^{|\beta|}D^{\beta} \phi
      \intl_{1,\cD}^{2}\big).
    \end{split}
  \end{eqnarray*}
  By multiplying $\rho^{-2n}$ on both sides, we can easily get
  \eqref{leq:c3}. The assertion (iii) follows from the Sobolev embedding
  theorem. The proof is complete.
\end{proof}

\begin{rmk}
  In the case of $n-d/2>2$, the function pair $(p,q)$ satisfies equation
  \eqref{eq:b2} for all $(t,x)\in[0,T]\times\cD$ and $\omega\in \Omega'$ s.t.
  $\mathbb{P}(\Omega')=1$,
  which is a classical solution of equation \eqref{eq:a1}
  with the terminal-boundary condition \eqref{con:a1}
  (see e.g. \cite{MaYo99} for details).
\end{rmk}

\section{Proof of Theorem \ref{thm:c1}}

The proof is rather long, and it is divided into two subsections. We
first consider the simpler domain of a half space, and then go to
the general $C^2$ domain.

\subsection{The case of the half space}

In this subsection, we consider BSPDE \eqref{eq:a1} and
\eqref{con:a1} living in a half space.

Recall $\mathbb{R}^{d}_{+}=\{x\in \mathbb{R}^{d}:x^{1}>0\}$. Denote
$y=(x^{2},\dots,x^d)$.

\begin{defn}\label{defn:d1}
  We say a function $f$ defined on $\bR^d$ has the \emph{property of reflection
  invariance}, if $f(x^1,y)=-f(-x^1,y)$ for almost every $(x^1,y)\in
  \bR^d$.
\end{defn}

For a function $u$ defined on $\bR^{d}_+$, define $\widetilde{u}$ and
$\overline{u}$ as follows:
\begin{equation}\label{not:d1}
    \widetilde{u}=
    \left\{\begin{array}{ll}
      u,~&\textit{on}~~\{x^1>0\},\\
      0,~&\textit{on}~~\{x^1\leq 0\};
    \end{array}\right.~~~~~~~~~
    \overline{u}(x^1,y)=
    \left\{\begin{array}{ll}
      u(x^1,y),~&\textit{if}~~x^1>0,\\
      0,~& \textit{if}~~x^1=0,\\
      -u(-x^1,y),~&\textit{if}~~x^1 < 0.
    \end{array}\right.
\end{equation}
It is clear that $\overline{u}$ has the \emph{property of reflection
invariance}.

\begin{lem}\label{lem:d}
  \emph{(a)} Let $m$ be a positive number.
  Then a function $u\in H^{m}_{0}(\bR^{d}_+)$ if and only if $\widetilde{u}\in
  H^{m}(\bR^{d})$.

  \emph{(b)} The function $u\in H^{1}_{0}(\bR^d_+)$ if and only if $\overline{u}\in
  H^{1}(\bR^d)$.
\end{lem}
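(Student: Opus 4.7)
The plan is to handle part (a) by standard zero-extension and translation-mollification arguments, and then bootstrap (a) together with the odd symmetry of $\overline{u}$ to obtain (b).

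For the forward direction of (a), if $u\in H^m_0(\bR^d_+)$, I would pick a sequence $\varphi_n\in C^\infty_0(\bR^d_+)$ converging to $u$ in $H^m$; since each $\mathrm{supp}\,\varphi_n$ sits compactly inside the open half-space, the zero extensions $\widetilde{\varphi}_n$ lie in $C^\infty_0(\bR^d)$ and are Cauchy in $H^m(\bR^d)$ with limit $\widetilde{u}$. The reverse direction is the main technical step. Given $\widetilde{u}\in H^m(\bR^d)$, which necessarily vanishes a.e.\ on $\{x^1\leq 0\}$, the plan is to translate by $\eps$ in the positive $x^1$-direction,
\[
u_\eps(x^1,y)=\widetilde{u}(x^1-\eps,y),
\]
so that $\mathrm{supp}\,u_\eps\subset\{x^1\geq\eps\}$ and $u_\eps\to\widetilde{u}$ in $H^m(\bR^d)$ by continuity of translation. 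Then mollify with a standard mollifier of radius $\delta<\eps$ and multiply by a smooth cutoff at large $|x|$; the resulting approximants lie in $C^\infty_0(\bR^d_+)$ and converge to $\widetilde{u}$ in $H^m(\bR^d)$. Restricted to $\bR^d_+$, they realize $u$ as an element of $H^m_0(\bR^d_+)$.

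For (b), I would exploit the identity
\[
\overline{u}(x^1,y)=\widetilde{u}(x^1,y)-\widetilde{u}(-x^1,y),
\]
which one checks on each half-space. In the forward direction, part (a) with $m=1$ gives $\widetilde{u}\in H^1(\bR^d)$, and its reflection $\widetilde{u}(-x^1,y)$ is also in $H^1(\bR^d)$ with the obvious weak derivatives, so their difference $\overline{u}$ lies in $H^1(\bR^d)$. Conversely, assume $\overline{u}\in H^1(\bR^d)$. The trace of $\overline{u}$ on the hyperplane $\{x^1=0\}$ is then a well-defined element of $L^2_{\mathrm{loc}}$ on that hyperplane, and the odd symmetry $\overline{u}(x^1,y)=-\overline{u}(-x^1,y)$ passes to the trace to force it to vanish almost everywhere. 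Hence the restriction $u=\overline{u}|_{\bR^d_+}$ belongs to $H^1(\bR^d_+)$ with zero trace on $\partial\bR^d_+$, which is the classical characterization of $H^1_0(\bR^d_+)$.

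The only step requiring genuine care is the backward direction of (a): one has to combine translation (to strictly separate the support from the boundary hyperplane), mollification (to smooth while keeping the support property), and a cutoff at infinity (to reach compact support), then verify $H^m$-convergence of the composite. Once this approximation scheme is set up, everything else, including (b), follows by routine manipulations.
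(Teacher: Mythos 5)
Your argument is correct, and parts (a) and (b)-necessity coincide with what the paper does (the paper simply cites Chen for (a); your translation--mollification--cutoff scheme is exactly the standard proof behind that citation, and the identity $\overline{u}(x^1,y)=\widetilde{u}(x^1,y)-\widetilde{u}(-x^1,y)$ is the same observation the paper uses implicitly). Where you genuinely diverge is the sufficiency in (b). The paper stays at the level of approximating sequences: it takes $\vf_n\in C_0^\infty(\bR^d)$ with $\vf_n\to\overline{u}$ in $H^1(\bR^d)$, antisymmetrizes by setting $\psi_n=\tfrac12\bigl(\vf_n(x^1,y)-\vf_n(-x^1,y)\bigr)$, notes that the odd symmetry of $\overline{u}$ forces $\psi_n\to\overline{u}$ as well, and then uses that each $\psi_n$ is smooth with $\psi_n(0,y)=0$, so its restriction to $\bR^d_+$ lies in $H^1_0(\bR^d_+)$. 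You instead pass through the trace operator: the odd symmetry kills the trace of $\overline{u}$ on $\{x^1=0\}$, and the classical identification of $H^1_0$ with the kernel of the trace finishes the job. Both routes are sound; the paper's is more self-contained and constructive (it needs only the elementary fact that a smooth $H^1$ function vanishing on the boundary hyperplane is in $H^1_0$ of the half-space), while yours imports the trace theorem and the trace-zero characterization of $H^1_0$ --- heavier machinery, but it makes the mechanism (odd symmetry $\Rightarrow$ zero boundary values) completely transparent and would transfer to more general reflection-symmetric settings.
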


\begin{proof}
  The proof of assertion (a) can be found in, e.g., Chen \cite[Page 48]{Chen05}.
  The necessity of assertion (b) follows from assertion (a). It
  remains to prove the sufficiency of (b). Indeed, we can find $\vf_{n}\in
  C_{0}^{\infty}(\bR^d)$ such that $\vf_{n}\rrow \overline{u}$ in $H^{1}(\bR^d)$ as
  $n\rrow\infty$. Denote $\overline{\vf}_{n}(x^1,y)=-\vf_{n}(-x^1,y)$. Note that
  $\overline{u}(x^1,y)=-\overline{u}(-x^1,y)$. Thus we have
  $\overline{\vf}_{n}\rrow \overline{u}$ in
  $H^{1}(\bR^d)$. Define $\psi_{n}=(\vf_{n}+\overline{\vf}_{n})/2$.
  Then $\psi_{n}\rrow \overline{u}$ in $H^{1}(\bR^d)$. Since $\psi_{n}(0,y)=0$, the
  restriction of $\psi_{n}$ on $\bR^d_+$ belongs to $H^{1}_{0}(\bR^d_+)$.
  Thus $u\in H^{1}_{0}(\bR^d_+)$. The proof is complete.
\end{proof}

The following existence and uniqueness result concerning the Cauchy problem
of BSPDEs is taken from Du and Meng \cite[Prop. 4.1]{DuMe09}, which can also
be proved by means of the duality method of Zhou \cite{Zhou92} and Lemma
\ref{lem:b5}.

\begin{lem}\label{lem:d1}
  Consider the following BSPDE (on $\bR^d$)
  \begin{equation}\label{eq:d1}
    \left\{\begin{array}{l}
      dp=-[a^{ij}(t)D_{ij}p+\sigma^{ik}(t)D_{i}q^k+F]dt+q^kdW^k_t, \\
      p(T,x)=\phi(x),\ x\in \mathbb{R}^{d},
    \end{array}\right.
  \end{equation}
  where $a$ and $\sigma$ are two predictable processes taken values in $S^n$
  and $\bR^{d\times d_1}$, respectively, such that
  $\kappa I+ \sigma\sigma^{*}\leq 2a \leq \kappa^{-1}I,~\forall(\omega,t)$.
  Suppose $F\in \mathbb{H}^{0}(\bR^d),~\phi \in
  L^{2}(\Omega,\mathscr{F}_{T},H^{1}(\bR^d))$.
  Then BSPDE \eqref{eq:d1} has a unique strong
  solution $(p,q)$ in $\bH^{2}(\bR^d)\times\bH^{1}(\bR^d;\bR^{d_1})$
  such that $p\in C([0,T],H^{1}(\bR^d))~(a.s.)$, and moreover,
  \begin{equation}\label{leq:d2}
    \intl p \intl_{2,\bR^d}^{2} +\intl q \intl_{1,\bR^d}^{2}
    +E\sup_{0\leq t\leq T}\|p(t,\cdot)\|_{1,\bR^d}^{2}
    \leq C(\kappa,T)\big{(} \intl F \intl_{0,\bR^d}^{2} + E \|
    \phi\|_{1,\bR^d}^{2}\big{)}.
  \end{equation}
\end{lem}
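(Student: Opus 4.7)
The plan is to exploit the fact that the leading coefficients $a(t)$ and $\sigma(t)$ depend only on $(\omega,t)$, so the BSPDE commutes with spatial translations. This $x$-independence lets one bootstrap the weak $H^1$-solution produced by Lemma~\ref{lem:b3} up to the $\mathbb{H}^2 \times \mathbb{H}^1$ regularity required, while the sharp sup estimate comes from Itô's formula applied at the $H^1$-level.

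First I would apply Lemma~\ref{lem:b3} with $\cD = \bR^d$ and $b=c=\nu=0$. Since $\bH^1_0(\bR^d)=\bH^1(\bR^d)$ and the boundary condition is vacuous, this produces a unique weak solution $(p,q) \in \bH^1(\bR^d) \times \bH^0(\bR^d; \bR^{d_1})$ with $p \in C([0,T], L^2(\bR^d))$ a.s. satisfying \eqref{leq:b1}. To upgrade the regularity, consider the spatial difference quotients $\Delta_i^h u(x) := h^{-1}[u(x+he_i)-u(x)]$. Because $a,\sigma$ are $x$-independent, shifting in $x$ commutes with every operator appearing in \eqref{eq:d1}, so by uniqueness $(\Delta_i^h p, \Delta_i^h q)$ is the weak solution of \eqref{eq:d1} with data $(\Delta_i^h F, \Delta_i^h \phi)$. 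Standard difference-quotient inequalities give $\intl \Delta_i^h F \intl_{-1,\bR^d} \leq \intl F \intl_{0,\bR^d}$ and $E\|\Delta_i^h \phi\|_{0,\bR^d}^2 \leq E\|D_i\phi\|_{0,\bR^d}^2$, so \eqref{leq:b1} yields a uniform-in-$h$ bound on $\intl \Delta_i^h p \intl_{1,\bR^d} + \intl \Delta_i^h q \intl_{0,\bR^d}$. Passing to weak limits as $h \to 0$ identifies the limit with $(D_ip, D_iq)$, whence $(p,q) \in \bH^2(\bR^d)\times \bH^1(\bR^d;\bR^{d_1})$. The $\bR^d$-analog of Proposition~\ref{prop:b1} then promotes the weak solution to a strong solution.

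For the sharp estimate \eqref{leq:d2}, since $(D_i p, D_i q)$ solves a BSPDE of the same form with data $(D_iF, D_i\phi)$, I would apply Itô's formula (Lemma~\ref{lem:b5}) to $\|D_i p(t)\|_{0,\bR^d}^2$ and sum with the analogous identity for $\|p(t)\|_{0,\bR^d}^2$. After integration by parts in $x$, the only dangerous quadratic terms are
\[
-2a^{jk} D_j D_i p \cdot D_k D_i p + 2\sigma^{jk} D_j D_i p \cdot D_i q^k - |D_i q|^2,
\]
and the super-parabolicity $\kappa I + \sigma\sigma^* \leq 2a$, combined with a term-by-term Young inequality $2\sigma^{jk} D_j D_ip \cdot D_iq^k \leq |\sigma^{jk} D_j D_ip|_{k}^2 + |D_iq|^2$, collapses them to $\leq -\kappa |DD_ip|^2$, absorbing both the $\sigma q$-cross term and the $|D_i q|^2$ term. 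A backward Gronwall argument then yields the first two pieces of \eqref{leq:d2}; the $E\sup_t$ bound follows from the Burkholder--Davis--Gundy inequality applied to the remaining martingale. Time continuity of $p$ in $H^1(\bR^d)$ is the content of Lemma~\ref{lem:b5}(a) applied at the $H^1$-level. Uniqueness is automatic from the linearity and the energy estimate.

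The main obstacle is the limit passage $h \to 0$ in the difference-quotient argument: one must verify that weak convergence in $\bH^1 \times \bH^0$ identifies the limit with the distributional $x$-derivative of $(p,q)$ and transfers the uniform bound via lower semicontinuity. The energy identity at the $H^2$-level is the other delicate point, since the $D^2 p$--$Dq$ cross term must be absorbed \emph{exactly} using the super-parabolicity slack $2a - \sigma\sigma^* \geq \kappa I$; any cruder Young inequality would destroy the sharp constant $\kappa$ on the left-hand side and thus break the estimate.
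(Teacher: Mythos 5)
Your proof is correct, but it is a genuinely different route from the paper's: the paper offers no proof of this lemma at all, simply importing it from Du and Meng \cite[Prop.~4.1]{DuMe09} and remarking that it could alternatively be obtained by Zhou's duality method together with Lemma \ref{lem:b5}. Your argument is instead self-contained relative to Lemma \ref{lem:b3}: you start from the unique weak solution on $\bR^d$ and bootstrap regularity by exploiting that $a(t),\sigma(t)$ are $x$-independent, so that translation commutes with the equation and, by uniqueness and linearity, $(\Delta_i^h p,\Delta_i^h q)$ is the weak solution with data $(\Delta_i^h F,\Delta_i^h\phi)$; the uniform bounds $\intl\Delta_i^h F\intl_{-1,\bR^d}\le\intl F\intl_{0,\bR^d}$ and $E\|\Delta_i^h\phi\|_{0,\bR^d}^2\le E\|D_i\phi\|_{0,\bR^d}^2$ then give $(p,q)\in\bH^2\times\bH^1$ by weak compactness, and the $\bR^d$-version of Proposition \ref{prop:b1} upgrades to a strong solution. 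This buys elementarity and transparency at the cost of generality: the translation-invariance trick would not prove the full $W^n_2$-theory of \cite{DuMe09} with $x$-dependent leading coefficients, but it is exactly adequate here since Lemma \ref{lem:d1} only concerns $a=a(t)$, $\sigma=\sigma(t)$. Two small remarks: (i) your Young-inequality absorption $2\sigma^{jk}D_jD_ip\,D_iq^k\le(\sigma\sigma^*)^{jk}D_jD_ip\,D_kD_ip+|D_iq|^2$ followed by $2a-\sigma\sigma^*\ge\kappa I$ is indeed the correct (and the only sufficiently sharp) way to kill both cross terms; (ii) the explicit It\^o computation at the $H^1$-level is actually dispensable, since once you know $(D_ip,D_iq)$ is the weak solution with data $(D_iF,D_i\phi)\in\bH^{-1}\times L^2(\Omega,\sF_T,L^2)$, estimate \eqref{leq:b1} applied to that equation already delivers $\intl D_ip\intl_{1,\bR^d}^2+\intl D_iq\intl_{0,\bR^d}^2+E\sup_t\|D_ip(t,\cdot)\|_{0,\bR^d}^2\le C(\intl F\intl_{0,\bR^d}^2+E\|\phi\|_{1,\bR^d}^2)$, which summed with the zeroth-order estimate is \eqref{leq:d2}; similarly $p\in C([0,T],H^1(\bR^d))$ follows most cleanly by applying Lemma \ref{lem:b5} to each $D_ip$ in the triple $H^1\subset L^2\subset H^{-1}$ rather than setting up the pivot space $H^1$.
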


Now we have the following

\begin{lem}\label{lem:e1}
  Let Assumptions \ref{ass:c2} be satisfied with
  $\cD=\bR^d_+$. Assume that $a$ and $\sigma$ are invariant in the space variable $x$.
  Suppose that $F\in \bH^{0}(\bR^{d}_{+})$ and
  $\phi\in L^{2}(\Omega,\sF_{T},H^{1}_{0}(\bR^{d}_{+})).$
  Then the following BSPDE
  \begin{equation}\label{eq:e1}
    \left\{\begin{array}{l}
      dp=-[a^{ij}(t)D_{ij}p+\sigma^{ik}(t)D_{i}q^k+F]dt+q^kdW^k_t, \\
      p(t,x)=0,\ x\in\partial \mathbb{R}^{d}_{+},\\
      p(T,x)=\phi(x),\ x\in \mathbb{R}^{d}_{+}
    \end{array}\right.
  \end{equation}
  has a unique strong
  solution $(p,q)\in \cH^{2,1}_{0}(\bR^{d}_{+})$ such that
  $p\in C([0,T],H^{1}(\bR^d_+))~(a.s.)$, and
  \begin{equation}\label{leq:e1}
    \|(p,q)\|_{\cH^{2,1}(\bR^{d}_{+})}^{2}
    + E\sup_{0\leq t\leq T}\|p(t,\cdot)\|_{1,\bR^{d}_{+}}^{2}
    \leq C\big( \intl F \intl_{0,\bR^{d}_{+}}^{2}
    + E\|\phi\|_{1,\bR^{d}_{+}}^{2}\big),
  \end{equation}
  where the constant $C$ depends only on $\kappa$ and $T$.
\end{lem}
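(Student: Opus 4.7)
The plan is to reduce the half-space Cauchy-Dirichlet problem to the whole-space Cauchy problem of Lemma~\ref{lem:d1} by odd reflection in the normal variable $x^1$. Let $\overline{\phi}$ and $\overline{F}$ denote the odd extensions in $x^1$ of $\phi$ and $F$ to $\bR^d$. By Lemma~\ref{lem:d}(b), $\overline{\phi}\in L^2(\Omega,\sF_T,H^1(\bR^d))$ with $\|\overline{\phi}\|_{1,\bR^d}^2=2\|\phi\|_{1,\bR^d_+}^2$; similarly $\overline{F}\in\bH^0(\bR^d)$ with a doubled norm. Since $a$ and $\sigma$ depend only on $(\omega,t)$, Lemma~\ref{lem:d1} yields a unique strong solution $(\overline{p},\overline{q})\in\bH^2(\bR^d)\times\bH^1(\bR^d;\bR^{d_1})$, with $\overline{p}\in C([0,T],H^1(\bR^d))$ almost surely, satisfying the estimate (\ref{leq:d2}).

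The central step is to show that $\overline{p}$ and $\overline{q}$ are themselves odd in $x^1$. Define the reflected pair $p^*(t,x^1,y):=-\overline{p}(t,-x^1,y)$ and $q^{*,k}(t,x^1,y):=-\overline{q}^k(t,-x^1,y)$. A chain-rule calculation, combined with the oddness of the data and the $x$-invariance of $a,\sigma$, would be used to verify that $(p^*,q^*)$ is also a strong solution of the same whole-space BSPDE with the same terminal data. The uniqueness assertion in Lemma~\ref{lem:d1} then forces $p^*=\overline{p}$ and $q^{*,k}=\overline{q}^k$, giving the desired odd symmetry.

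Once oddness is in hand, set $(p,q):=(\overline{p},\overline{q})|_{\bR^d_+}$. The vanishing traces on $\{x^1=0\}$ yield $p(t,\cdot)\in H^1_0(\bR^d_+)$ by Lemma~\ref{lem:d}, and combined with $\overline{p}\in\bH^2(\bR^d)$ and $\overline{q}\in\bH^1(\bR^d;\bR^{d_1})$ we obtain $(p,q)\in\cH^{2,1}_0(\bR^d_+)$ with $p\in C([0,T],H^1(\bR^d_+))$ a.s. The BSPDE (\ref{eq:e1}) and the terminal-boundary condition follow by simple restriction, the estimate (\ref{leq:e1}) is inherited from (\ref{leq:d2}) after accounting for the norm doubling, and uniqueness of the strong solution is inherited from the weak-solution uniqueness in Lemma~\ref{lem:b3}, since any strong solution is in particular a weak solution.

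The main technical obstacle lies in the reflection-invariance step for $(p^*,q^*)$. Under $x^1\mapsto-x^1$, the diagonal terms $a^{ii}D_{ii}\overline{p}$ and the tangential mixed terms $a^{jk}D_{jk}\overline{p}$ ($j,k\geq2$) transform with the correct parity, but the cross terms $2a^{1j}D_{1j}\overline{p}$ ($j\geq2$) and $\sigma^{1k}D_1\overline{q}^k$ acquire the opposite parity, so a naive reflection does not close. A natural way around this is to first apply a Schur-complement-type shear $\tilde{y}^j=y^j-(a^{1j}/a^{11})x^1$ in the tangential variables (which preserves $\bR^d_+$, keeps the coefficients $x$-invariant and super-parabolic, and eliminates the $a^{1j}$ cross terms) before reflecting, absorbing the Itô corrections coming from the time-dependence of $a^{1j}/a^{11}$ into the free term, with a parallel adjustment to handle the remaining $\sigma^{1k}D_1\overline{q}^k$ contribution. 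Making this reduction rigorous is the crux of the proof.
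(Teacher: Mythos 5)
Your overall route is exactly the paper's: extend $\phi$ and $F$ oddly in $x^1$, solve the whole-space problem by Lemma \ref{lem:d1}, argue that the resulting $(P,Q)$ is itself odd in $x^1$, and restrict to $\bR^d_+$, using Lemma \ref{lem:d}(b) for membership in $H^1_0$ and weak-solution uniqueness for uniqueness. The paper disposes of the oddness step in one line (``by symmetry and the uniqueness of the solution''), whereas you correctly isolate it as the crux: if $P$ is odd in $x^1$ then $D_{11}P$ and $D_{jk}P$ ($j,k\geq 2$) are odd while $D_{1j}P$ ($j\geq 2$) and $D_1Q^k$ are even, so the reflected pair $P^*(t,x^1,y)=-P(t,-x^1,y)$, $Q^{*}(t,x^1,y)=-Q(t,-x^1,y)$ solves the whole-space equation with $a,\sigma$ replaced by $RaR$ and $R\sigma$, where $R=\mathrm{diag}(-1,1,\dots,1)$; uniqueness then yields $P^*=P$ only when $a^{1j}=0$ ($j\geq 2$) and $\sigma^{1k}=0$. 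This obstruction is not cosmetic: already for the deterministic heat semigroup generated by a constant matrix with $a^{12}\neq 0$, the kernel $\exp(-z^{T}a^{-1}z/(4t))$ is not even in $z^1$, so the evolution of oddly extended data does not vanish on $\{x^1=0\}$.

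However, the repair you sketch does not close the gap. First, the shear $\tilde y^j=y^j-(a^{1j}(t)/a^{11}(t))x^1$ has time dependence that is merely predictable; $a^{1j}(t)/a^{11}(t)$ need not be a semimartingale, so the ``It\^o corrections'' you propose to absorb into the free term are not defined. Second, and decisively, the $\sigma^{1k}D_1q^k$ terms cannot be removed by \emph{any} spatial change of variables $x\mapsto Tx$ mapping $\bR^d_+$ onto itself: such a $T$ must have first row $(c,0,\dots,0)$ with $c>0$, whence $(T\sigma)^{1k}=c\,\sigma^{1k}$. So the reduction you yourself call ``the crux of the proof'' is not merely unrigorous but unavailable, and your argument (like the bare symmetry appeal it is meant to replace) proves the lemma only when $a^{1j}(t)\equiv 0$ for $j\geq 2$ and $\sigma^{1k}(t)\equiv 0$. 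Handling general $x$-independent $a(t),\sigma(t)$ requires a different mechanism, e.g.\ a direct a priori bound on $\li D^2p\ri_{0,\bR^d_+}$ for $p\in H^2\cap H^1_0(\bR^d_+)$, obtained by first controlling tangential second derivatives (for which reflection, or integration by parts with vanishing boundary terms, does work) and then recovering $D_{11}p$ from the equation via super-parabolicity.
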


\begin{proof}
  Recalling the definition \eqref{not:d1},
  we have $\overline{F}\in\bH^{0}(\bR^{d})$ and
  $\overline{\phi}\in L^{2}(\Omega,\sF_{T},H^{1}(\bR^{d}))$. From Lemma
  \ref{lem:d1}, the BSPDE
  \begin{equation}\label{eq:d2}
    \left\{\begin{array}{l}
      dP=-[a^{ij}(t)D_{ij}P+\sigma^{ik}(t)D_{i}Q^k+\overline{F}]dt+Q^kdW^k_t, \\
      P(T,x)=\overline{\phi}(x),\ x\in \mathbb{R}^{d}
    \end{array}\right.
  \end{equation}
  has a unique strong
  solution $(P,Q)$ in the
  space $\bH^{2}(\bR^d)\times\bH^{1}(\bR^d;\bR^{d_1})$
  such that $P\in C([0,T],H^{1}(\bR^d))~(a.s.)$, with the estimate
  \begin{equation}\label{leq:d3}
    \intl P \intl_{2,\bR^d}^{2} +\intl Q \intl_{1,\bR^d}^{2}
    +E\sup_{0\leq t\leq T}\|P(t,\cdot)\|_{1,\bR^d}^{2}
    \leq C(\kappa,T)\big{(} \intl \overline{F} \intl_{0,\bR^d}^{2} + E \|
    \overline{\phi}\|_{1,\bR^d}^{2}\big{)}.
  \end{equation}
  By symmetry and the uniqueness of the solution (of equation \eqref{eq:d2}),
  we know that $P$ and $Q$ have the
  \emph{property of reflection invariance}, for a.e.
  $(\omega,t)$. Denote by $p$ and $q$ the restrictions of $P$ and $Q$
  on $\bR^d_+$, respectively. From Lemma \ref{lem:d} (b), we know that
  $p\in \bH^1_0\cap\bH^{2}(\bR^d_+)$ and $q\in \bH^1_0(\bR^d_+;\bR^{d_1})$.
  Moreover,
  $p\in C([0,T],H^{1}(\bR^d_+))~(a.s.)$. It is evident that the pair $(p,q)$
  is a strong solution of equation \eqref{eq:e1}. Since every strong solution
  of equation \eqref{eq:e1}
  is also a weak solution, from the uniqueness of the weak solution,
  we know that $(p,q)$ is the unique strong solution of
  \eqref{eq:e1}. Estimate \eqref{leq:e1} follows from inequality
  \eqref{leq:d3}. The proof is complete.
\end{proof}

Now we prove the following perturbation result, which will be used
in the proof of Theorem \ref{thm:c1}.

\begin{prop}\label{prop:e}
  Consider the following BSPDE
  \begin{equation}\label{eq:e}
    \left\{\begin{array}{l}
      dp=-[a^{ij}D_{ij}p+\sigma^{ik}D_{i}q^k+F]dt+q^kdW^k_t, \\
      p(t,x)=0,\ x\in\partial \mathbb{R}^{d}_{+},\\
      p(T,x)=\phi(x),\ x\in \mathbb{R}^{d}_{+}.
    \end{array}\right.
  \end{equation}
  Assume that for a constant $\delta>0$ and for any $(\omega,t,x)$ we have
  \begin{equation}\label{con:e}
    |a(t,x)-a_{0}(t)|\leq\delta,\quad |\sigma(t,x)-\sigma_{0}(t)|\leq\delta,
  \end{equation}
  where $\{a_{0}(t): 0\leq t\leq T\}$ and
  $\{\sigma_{0}(t): 0\leq t\leq T\}$ are predicable processes
  satisfying Assumptions \ref{ass:c2}.
  Suppose that $F\in \bH^{0}(\bR^{d}_{+})$ and
  $\phi\in L^{2}(\Omega,\sF_{T},H^{1}_{0}(\bR^{d}_{+})).$

  Under the above assumptions, we assert that there exists a constant
  $\delta(\kappa,T)>0$ such that if $\delta\leq\delta(\kappa,T)$ then
  BSPDE \eqref{eq:e} has a unique strong solution
  $(p,q)\in \cH^{2,1}_{0}(\bR^{d}_{+})$ such that
  $p\in C([0,T],H^{1}(\bR^d_+))~(a.s.)$, and
  \begin{equation}\label{leq:e}
    \|(p,q)\|_{\cH^{2,1}(\bR^{d}_{+})}^{2}
    + E\sup_{0\leq t\leq T}\|p(t,\cdot)\|_{1,\bR^{d}_{+}}^{2}
    \leq C\big(\intl F \intl_{0,\bR^{d}_{+}}^{2}
    + E\|\phi\|_{1,\bR^{d}_{+}}^{2}\big),
  \end{equation}
  where the constant $C$ depends on $\kappa$ and $T$.
\end{prop}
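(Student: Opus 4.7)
The plan is to solve \eqref{eq:e} by a Picard/Banach contraction argument, treating the $x$-dependent coefficients $a,\sigma$ as a small perturbation of the spatially constant coefficients $a_0,\sigma_0$ handled by Lemma \ref{lem:e1}. Define a map $\mathcal{T}:\cH^{2,1}_0(\bR^d_+)\to\cH^{2,1}_0(\bR^d_+)$ by letting $(p,q)=\mathcal{T}(u,v)$ be the unique strong solution (via Lemma \ref{lem:e1}) of
\begin{equation*}
  \begin{cases}
    dp=-\big[a_0^{ij}(t)D_{ij}p+\sigma_0^{ik}(t)D_{i}q^k+\widetilde{F}(u,v)\big]dt+q^k dW^k_t,\\
    p(t,x)=0,\ x\in\partial\bR^d_+;\qquad p(T,x)=\phi(x),
  \end{cases}
\end{equation*}
where $\widetilde{F}(u,v):=(a^{ij}-a_0^{ij})D_{ij}u+(\sigma^{ik}-\sigma_0^{ik})D_i v^k+F$. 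Assumption \eqref{con:e} together with $(u,v)\in\cH^{2,1}_0(\bR^d_+)$ and $F\in\bH^0(\bR^d_+)$ guarantees $\widetilde{F}(u,v)\in\bH^0(\bR^d_+)$, so $\mathcal{T}$ is well-defined.

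Next I establish the contraction. For two inputs $(u_j,v_j)$, $j=1,2$, the difference $(\delta p,\delta q):=\mathcal{T}(u_1,v_1)-\mathcal{T}(u_2,v_2)$ solves the same linear problem with zero terminal-boundary data and free term $(a^{ij}-a_0^{ij})D_{ij}(u_1-u_2)+(\sigma^{ik}-\sigma_0^{ik})D_i(v^k_1-v^k_2)$. Applying estimate \eqref{leq:e1} and using \eqref{con:e},
\begin{equation*}
  \|(\delta p,\delta q)\|_{\cH^{2,1}(\bR^d_+)}^{2}\leq C_0(\kappa,T)\,\delta^{2}\,\|(u_1-u_2,v_1-v_2)\|_{\cH^{2,1}(\bR^d_+)}^{2}.
\end{equation*}
Choose $\delta(\kappa,T)>0$ so that $C_0\,\delta(\kappa,T)^{2}\leq 1/4$. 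Then $\mathcal{T}$ is a strict contraction on the Banach space $\cH^{2,1}_0(\bR^d_+)$, and Banach's fixed point theorem produces a unique fixed point $(p,q)$, which by construction is a strong solution of \eqref{eq:e}. The pathwise continuity $p\in C([0,T],H^1(\bR^d_+))$ is inherited from Lemma \ref{lem:e1}.

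Uniqueness of the strong solution to \eqref{eq:e} is immediate: any two strong solutions are fixed points of $\mathcal{T}$, and the contraction estimate forces them to coincide. For the a priori bound \eqref{leq:e}, apply \eqref{leq:e1} to the fixed point $(p,q)=\mathcal{T}(p,q)$:
\begin{equation*}
  \|(p,q)\|_{\cH^{2,1}(\bR^d_+)}^{2}+E\sup_{t\leq T}\|p(t,\cdot)\|_{1,\bR^d_+}^{2}\leq C_0\delta^{2}\|(p,q)\|_{\cH^{2,1}(\bR^d_+)}^{2}+C\big(\intl F\intl_{0,\bR^d_+}^{2}+E\|\phi\|_{1,\bR^d_+}^{2}\big),
\end{equation*}
and the first term on the right is absorbed into the left since $C_0\delta^{2}\leq 1/4$, yielding \eqref{leq:e}.

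The main obstacle, and the reason Lemma \ref{lem:e1} (which allows $a_0,\sigma_0$ to be arbitrary predictable, $x$-independent processes satisfying super-parabolicity) is crucial, is that the linearized reference equation must retain the full $H^2\times H^1$ regularity uniformly in these time-random coefficients; only then does the constant $C_0$ in the contraction estimate depend solely on $\kappa$ and $T$, so that the threshold $\delta(\kappa,T)$ can be chosen independently of the particular $a_0,\sigma_0$. Once this uniformity is in hand, the perturbation argument closes routinely.
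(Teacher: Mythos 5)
Your proposal is correct and follows essentially the same route as the paper: freezing the leading coefficients at $a_0(t),\sigma_0(t)$, moving the differences into the free term, invoking Lemma \ref{lem:e1}, and closing with a Banach fixed-point argument plus absorption for the a priori estimate. The only cosmetic difference is that you track the contraction constant as $C_0\delta^2$ rather than the paper's $C\delta$, which changes nothing.
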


\begin{proof}
  In view of Lemma \ref{lem:e1}, we know that for any $(u,v)\in
  \cH^{2,1}(\bR^{d}_{+})$,
  the BSPDE
  \begin{equation}\label{eq:e4}
    \left\{\begin{array}{l}
      dp =-\big{[} a_{0}^{ij}D_{ij}p+\sigma_{0}^{i}D_{i}q
      +(a^{ij}-a_{0}^{ij})D_{ij}u\\~~~~~~~~~~~~~~~~~~
      +(\sigma^{i}-\sigma_{0}^{i})D_{i}v +F \big{]}dt + qdW_{t},\\
      p(t,x)=0,~~~ x\in\partial \mathbb{R}^{d}_{+},\\
      p(T,x)=\phi(x),~~~ x\in \mathbb{R}^{d}_{+}
    \end{array}\right.
  \end{equation}
  has a unique strong solution
  $(p,q)\in \cH^{2,1}_{0}(\bR^{d}_{+})$ such that
  $p\in C([0,T],H^{1}(\bR^d_+))~(a.s.)$.
  We define the operator
  $A:\cH^{2,1}(\bR^{d}_{+}) \rightarrow \cH^{2,1}(\bR^{d}_{+})$
  as follows:
  $$ A(u,v) ~=~ (p,q). $$
  Then from estimate \eqref{leq:e1},
  we obtain that for
  $(u_{i},v_{i})\in \cH^{2,1}(\bR^{d}_{+}),~i=1,2$,
  \begin{eqnarray*}
    \|A(u_1-u_2,v_1-v_2)\|_{\cH^{2,1}(\bR^{d}_{+})}^{2}
    \leq C \delta
    \|(u_1-u_2,v_1-v_2)\|_{\cH^{2,1}(\bR^{d}_{+})}^{2}.
  \end{eqnarray*}
  Taking $\delta = (2C)^{-1}= (2C(\kappa,T))^{-1}$, we have that the
  operator $A$ is a contraction in $\cH^{2,1}(\bR^{d}_{+})$,
  and thus there exists a unique element $(p,q)\in \cH^{2,1}(\bR^{d}_{+})$
  such that $A(p,q)=(p,q)$. Furthermore, we have
  $(p,q)\in \cH^{2,1}_{0}(\bR^{d}_{+})$ and
  $p\in C([0,T],H^{1}(\bR^d_+))~(a.s.)$. It is clear that $(p,q)$
  is the unique strong solution
  of BSPDE \eqref{eq:e}.

  To establish estimate \eqref{leq:e},
  in view of Lemma \ref{lem:e1}, applying estimate \eqref{leq:e1} to
  equation \eqref{eq:e4}, we have
  \begin{equation*}
    \|(p,q)\|_{\cH^{2,1}(\bR^{d}_{+})}^{2}
    + E\sup_{0\leq t\leq T}\|p(t,\cdot)\|_{1,\bR^{d}_{+}}^{2}
    \leq C\delta \|(p,q)\|_{\cH^{2,1}(\bR^{d}_{+})}^{2}+
    C\big( \intl F \intl_{0,\bR^{d}_{+}}^{2}
    + E\|\phi\|_{1,\bR^{d}_{+}}^{2}\big).
  \end{equation*}
  Taking $\delta = (2C)^{-1}$, we obtain \eqref{leq:e}.
  The proof is complete.
\end{proof}

\subsection{The case of the general $C^2$ domain}

In this subsection, we shall complete the proof of Theorem \ref{thm:c1}. To
simplify the notation, we define
\begin{eqnarray*}
  \fH^{2,1}(\cD)~=&\big\{(u,v)\in \cH^{2,1}_{0}(\cD):u\in
    L^{2}(\Omega,L^{\infty}([0,T],H^{1}(\cD))),\\
    &~~~~u\in C([0,T],L^{2}(\cD))~(a.s.)\big\}
\end{eqnarray*}
being equipped with the norm
\begin{equation}
  \|(u,v)\|_{\fH^{2,1}(\cD)}=\big(\| (u, v)\|_{\cH^{2,1}(\cD)}^{2}
    + E\sup_{0\leq t\leq T}\|u(t,\cdot)\|_{1,\cD}^{2}\big)^{1/2}.
\end{equation}
It is clear that $\fH^{2,1}(\cD)$ is a Banach space.

First we have the following fact.

\begin{lem}\label{lem:f}
  Let $\Phi$ be the map defined in Assumption \ref{ass:c1} and $\Psi$ be the
  inverse of $\Phi$. Suppose $u\in H^{1}_{0}(\bR^{d}_{+})$ s.t.
  $\emph{supp}(u)\subset B_{+}\cup\partial\bR^{d}_{+}$. Then $u\circ\Psi\in
  H^{1}_{0}(\cD)$.
\end{lem}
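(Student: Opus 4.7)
The plan is to approximate $u$ by smooth functions compactly supported in $\bR^{d}_{+}\cap 2B_{+}$, transport them through $\Psi$ to $\cD$, and then pass to the limit in $H^{1}(\cD)$.

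First, since $u\in H^{1}_{0}(\bR^{d}_{+})$, choose $\vf_{n}\in C_{0}^{\infty}(\bR^{d}_{+})$ with $\vf_{n}\rrow u$ in $H^{1}(\bR^{d}_{+})$. Fix a cutoff $\chi\in C_{0}^{\infty}(\bR^{d})$ with $\chi\equiv 1$ on $\overline{B_{+}}$ and $\mathrm{supp}(\chi)\subset\{|x|<3/2\}$, and set $\widetilde{\vf}_{n}=\chi\vf_{n}$. From the hypothesis $\mathrm{supp}(u)\subset B_{+}\cup\ptl\bR^{d}_{+}$ combined with $\ptl\bR^{d}_{+}$ being null in $\bR^{d}_{+}$, $u$ vanishes a.e.\ outside $\overline{B_{+}}$; since $\chi\equiv 1$ there, $\chi u=u$ and (using $u\,D\chi=0$ a.e.\ because $D\chi$ is supported where $u=0$) $D(\chi u)=Du$ a.e., so $\widetilde{\vf}_{n}\rrow u$ in $H^{1}(\bR^{d}_{+})$. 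Each $\widetilde{\vf}_{n}$ lies in $C_{0}^{\infty}(\bR^{d}_{+})$ with compact support contained in $\bR^{d}_{+}\cap\{|x|<3/2\}\subset 2B_{+}$.

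Next, define $\psi_{n}$ on $\cD$ by $\psi_{n}=\widetilde{\vf}_{n}\circ\Psi$ on $U\cap\cD$ and $\psi_{n}=0$ elsewhere. Because $\widetilde{\vf}_{n}$ vanishes in a $\bR^{d}$-neighborhood of both the flat face $\{x^{1}=0\}$ and the spherical face $\{|x|=2\}$ of $\ptl(2B_{+})$, and $\Phi$ is a $C^{2}$-diffeomorphism of $2B_{+}$ onto $U\cap\cD$ sending the first face into $\ptl\cD$ and the second into the interior boundary $\ptl U\cap\cD$, the extension $\psi_{n}$ is a $C^{2}$ function with compact support strictly inside $\cD$. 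In particular $\psi_{n}\in H^{1}_{0}(\cD)$. Verifying this support-tracking is the main technical point of the argument; the rest is routine.

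Finally, a change of variables together with the bi-Lipschitz bound $\kappa|\xi|^{2}\leq|(D\Phi)\xi|^{2}\leq\kappa^{-1}|\xi|^{2}$ of Assumption~\ref{ass:c1} gives
\[
  \|\psi_{n}-u\circ\Psi\|_{L^{2}(\cD)}^{2}
  =\int_{2B_{+}}|\widetilde{\vf}_{n}-u|^{2}\,|\det D\Phi|\,dy
  \leq C\|\widetilde{\vf}_{n}-u\|_{L^{2}(\bR^{d}_{+})}^{2},
\]
and, using the chain rule $D(u\circ\Psi)=(Du\circ\Psi)\,D\Psi$ with $|D\Psi|\leq\kappa^{-1/2}$,
\[
  \|D(\psi_{n}-u\circ\Psi)\|_{L^{2}(\cD)}^{2}
  \leq C\|D(\widetilde{\vf}_{n}-u)\|_{L^{2}(\bR^{d}_{+})}^{2}.
\]
Both right-hand sides tend to zero, so $\psi_{n}\rrow u\circ\Psi$ in $H^{1}(\cD)$, and the closedness of $H^{1}_{0}(\cD)$ in $H^{1}(\cD)$ yields $u\circ\Psi\in H^{1}_{0}(\cD)$.
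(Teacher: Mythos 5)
Your proof is correct and follows essentially the same route as the paper's: approximate $u$ in $H^{1}$ by smooth functions compactly supported in the half-ball, push them forward through $\Psi$ to obtain $C^{2}$ functions compactly supported in $\cD$, control the $H^{1}(\cD)$-error via the change-of-variables bounds of Assumption \ref{ass:c1}, and conclude by the closedness of $H^{1}_{0}(\cD)$ in $H^{1}(\cD)$. The only difference is minor: the paper directly asserts an approximating sequence in $C^{\infty}_{0}(B_{+})$, whereas you construct your approximants inside $2B_{+}$ by a cutoff, which handles the support near the edge $\{x^{1}=0,\ |x|=1\}$ a bit more carefully and is equally valid.
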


\begin{proof}
  The proof is straightforward. Take $u_{n}\in C^{\infty}_{0}(B_+)$ such that
  $u_{n}\rightarrow u$ strongly in $H^{1}(\bR^{d}_{+})$ as
  $n\rightarrow\infty$. From the properties of $\Phi$, it is easy to show
  that $u_{n}\circ\Psi\in C^{2}(\cD)$ and $\textrm{supp}(u_{n}\circ\Psi)\subset U
  \cup \cD$ and then $u_{n}\circ\Psi\in H^{1}_{0}(\cD)$, where $U$ is taken
  from Assumption \ref{ass:c1}. Now we have
  \begin{eqnarray*}
    \begin{split}
      &\|u_{n}\circ\Psi - u\circ\Psi\|_{0,\cD}^{2} \leq |\det(D\Phi)|_{L^{\infty}}
      \|u_{n}-u\|_{0,B_+}^{2}\rightarrow 0,\\
      &\|D(u_{n}\circ\Psi -
      u\circ\Psi)\|_{0,\cD}^{2}\leq |\det(D\Phi)|_{L^{\infty}}
      |D\Psi|_{L^{\infty}}^2
      \|D(u_{n}-u)\|_{0,B_+}^{2}\rightarrow 0,
      \end{split}
  \end{eqnarray*}
  as $n\rightarrow\infty$, which implies $u\circ\Psi\in
  H^{1}_{0}(\cD)$.
\end{proof}

The following is Theorem \ref{thm:c1} under an additional assumption on the
coefficients $a$ and $\sigma$.

\begin{prop}\label{prop:f1}
  Let the conditions of Theorem \ref{thm:c1} be satisfied. In addition,
  assume that $a_{x}$ and $\sigma_{x}$ are bounded.
  Then BSPDE \eqref{eq:a1} and \eqref{con:a1}
  has a unique strong solution $(p,q)\in \fH^{2,1}(\cD)$ such that
  \begin{equation}\label{leq:g1}
    \intl p\intl_{2,\cD}^{2}+\intl q\intl_{1,\cD}^{2}
    + E\sup_{0\leq t\leq T}\|p(t,\cdot)\|_{1,\cD}^{2}
    \leq C\big( \intl F \intl_{0,\cD}^{2} + E\|\phi\|_{1,\cD}^{2}\big),
  \end{equation}
  where the constant $C$ only depends $K,\rho_{0},\kappa,T,$ and the function
  $\gamma$.
\end{prop}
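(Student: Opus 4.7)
The plan is to prove Proposition \ref{prop:f1} by the method of continuity, whose main input is an a priori estimate obtained through a finite localization of $\cD$ combined with the half-space perturbation result Proposition \ref{prop:e}. Since $a_x$ and $\sigma_x$ are bounded, equation \eqref{eq:a1} admits the divergence form \eqref{eq:a2}, so Lemma \ref{lem:b3} already supplies a weak solution $(p,q)\in\bH^1_0(\cD)\times\bH^0(\cD;\bR^{d_1})$ satisfying \eqref{leq:b1}; the task here is to upgrade it to a strong solution in $\fH^{2,1}(\cD)$ with the estimate \eqref{leq:g1}.

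For the a priori estimate, assume $(p,q)\in\fH^{2,1}(\cD)$ solves \eqref{eq:a1}--\eqref{con:a1}. Using the continuity modulus $\gamma$ and the $C^2$ bounds $|D^\alpha\Phi|\leq K$, I would first choose $r_0<\rho_0$ small enough so that on every ball of radius $2r_0$, and even after pulling back by any boundary chart $\Phi$, the leading coefficients oscillate below the threshold $\delta(\kappa,T)$ of Proposition \ref{prop:e}. Cover $\overline{\cD}$ by finitely many such balls $\{B_{r_0}(x_j)\}_{j=0}^N$ with $x_0$ interior and $x_j\in\ptl\cD$ for $j\geq 1$, and fix a subordinate smooth partition of unity $\{\zeta_j\}$. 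The interior piece $\zeta_0 p$, extended by zero, solves a Cauchy problem on $\bR^d$ whose free term contains the commutator $[\zeta_0,L]p$ involving only $p$, $Dp$ and $q$; freezing the leading coefficients at $x_0$ and applying Lemma \ref{lem:d1} yields the $H^2\times H^1$ bound on $(\zeta_0 p,\zeta_0 q)$. For each boundary piece, pull back by $\Psi_j=\Phi_j^{-1}$: the $C^2$ regularity of $\Phi_j$ combined with an analogue of Lemma \ref{lem:f} shows $(\zeta_j p)\circ\Phi_j\in H^1_0(B_+)$, and the pulled-back BSPDE on $\bR^d_+$ has leading coefficients within $\delta(\kappa,T)$ of predictable frozen processes $a_0(t),\sigma_0(t)$, so Proposition \ref{prop:e} applies. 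Transferring the estimate back via $\Phi_j$, summing over $j$, and using \eqref{leq:b1} together with interpolation to absorb the lower-order commutator contributions yields \eqref{leq:g1}.

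Existence then follows by the standard continuation argument. For $\lambda\in[0,1]$, interpolate between $L_0=\tfrac{1}{2}\Delta$ (with vanishing $\sigma$ and lower-order terms) and $L_1=L$; super-parabolicity and Assumption \ref{ass:c3} are preserved uniformly in $\lambda$. The set $I\subset[0,1]$ for which the $L_\lambda$-problem is solvable in $\fH^{2,1}(\cD)$ for all admissible data is closed by the uniform a priori estimate, and open by a contraction argument: near any $\lambda_0\in I$, the map $T_\lambda(u,v)=L_{\lambda_0}^{-1}\bigl(F+(L_{\lambda_0}-L_\lambda)(u,v)\bigr)$ is a contraction on $\fH^{2,1}(\cD)$ when $|\lambda-\lambda_0|$ is small. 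The endpoint $0\in I$ is handled by the same localization scheme, the half-space case reducing directly to Lemma \ref{lem:e1}, whence $1\in I$. Uniqueness is immediate from the a priori estimate applied to the difference of two solutions.

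The principal obstacle is the a priori estimate, and within it two subtle points: ensuring that the oscillation of the coefficients after the chart pull-back genuinely drops below $\delta(\kappa,T)$, which uses both Assumption \ref{ass:c3} and the $C^2$ bounds on $\Phi$; and controlling the commutators $[\zeta_j,L]p$ together with the chart-curvature terms, which involve $Dp$ and $q$ and must be absorbed using the weak-solution bound \eqref{leq:b1} rather than the sought-after $H^2$ bound itself, so as to avoid a circular estimate.
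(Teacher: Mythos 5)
Your overall strategy---upgrade the weak solution from Lemma \ref{lem:b3} via localization, boundary flattening, and the frozen-coefficient perturbation result Proposition \ref{prop:e}---is the same as the paper's. But your final step contains a genuine gap. You propose to close the a priori estimate by ``using \eqref{leq:b1} together with interpolation to absorb the lower-order commutator contributions.'' Estimate \eqref{leq:b1} is the weak-solution bound for the divergence-form equation \eqref{eq:a2}, and when \eqref{eq:a1} is rewritten in divergence form its first-order coefficients become $b^i-a^{ij}_{x^j}$ and $\nu^k-\sigma^{ik}_{x^i}$; hence the constant in \eqref{leq:b1} depends on $\sup|a_x|$ and $\sup|\sigma_x|$. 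Your argument therefore proves \eqref{leq:g1} only with a constant depending on these derivative bounds, whereas the proposition asserts (and the subsequent applications require) that $C$ depends only on $K,\rho_0,\kappa,T,\gamma$: Proposition \ref{prop:f1} is later applied to mollified coefficients $a_r,\sigma_r$ whose gradients blow up as $r\to\infty$ (Step~2 of the proof of Theorem \ref{thm:c2}, and the proof of Theorem \ref{thm:i1}), and the same estimate must survive in Proposition \ref{prop:f2} where no divergence form is available at all. The paper closes the loop differently: it applies the Hilbert-space It\^o formula to $\|p(s,\cdot)\|_{0,\cD}^2$ using the \emph{non-divergence} form (so only $|a|,|\sigma|,|b|,|c|,|\nu|$ enter), absorbs $E\int_s^T\|q\|_{0,\cD}^2$ with a small parameter, and then runs a backward Gronwall argument in $s$ on $E\|p(s,\cdot)\|_{1,\cD}^2$ to eliminate $\intl p\intl_{1,\cD}^2$ from the right-hand side. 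You need this (or an equivalent $|a_x|$-free control of the lower-order norms) to prove the proposition as stated. Using \eqref{leq:b1} only to conclude that the right-hand side of the localized estimate is \emph{finite} (so that $(p,q)\in\cH^{2,1}(\cD)$) is legitimate, and is exactly what the paper does; it is the quantitative use that fails.

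Two secondary points. First, you cover $\overline{\cD}$ by \emph{finitely} many balls, which presupposes $\cD$ bounded; the paper's Assumption \ref{ass:c1} is a uniform chart condition precisely so that unbounded domains are admissible, and it avoids the issue by writing the localized estimate for every $z\in\bR^d$ and integrating over $z$ (a continuous partition of unity) rather than summing over a finite cover. For unbounded $\cD$ you would need at least a locally finite cover with uniformly bounded overlap. Second, the continuation-in-$\lambda$ argument you append is not needed here: once the boundedness of $a_x,\sigma_x$ gives the divergence form, Lemma \ref{lem:b3} already supplies existence of a weak solution for the \emph{full} operator, and the localization argument (identifying the localized weak solution with the strong solution produced by Proposition \ref{prop:e} via uniqueness of weak solutions) upgrades it directly; your base case $\lambda=0$ would require exactly the same regularity-upgrade argument, so the homotopy buys nothing. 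The method of continuation is reserved in the paper for Theorem \ref{thm:c1} itself, where the boundedness of $a_x,\sigma_x$ is removed.
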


\begin{proof}
Since $a_{x}$ and $\sigma_{x}$ are bounded, equation \eqref{eq:a1} can be
written into the divergence form
\begin{equation*}
    dp= -\big[ (a^{ij}p_{x^j}
    +\sigma^{ik}q^{k})_{x^i}+(b^{i}-a^{ij}_{x^j})p_{x^i}
    -c p + (\nu^{k}-\sigma^{ik}_{x^i})q^{k}+F\big]dt
    +q^{k}dW^{k}_{t}.
\end{equation*}
From Lemma \ref{lem:b3}, BSPDE \eqref{eq:a1} and \eqref{con:a1} has a unique
weak solution $(p,q)\in\bH^{1}_{0}(\cD)\times \bH^{0}(\cD;\bR^{d_1})$.

Now take a sufficiently small $\rho \in (0,\rho_{0})$ to satisfy the
following two conditions.
\begin{enumerate}
  \item[(1)] $\gamma(8\rho)\leq \delta$ with the constant
  $\delta=\delta(\kappa,T)$
  being given by Proposition \ref{prop:e}. In view of Assumption \ref{ass:c3},
  for any $(\omega,t)$ and $x,y\in \mathcal{D}$, we have
  \begin{equation}\label{prp:f}
    |a(t,x)-a(t,y)|\leq \delta,\quad |\sigma(t,x)-\sigma(t,y)|\leq \delta
  \end{equation}
  if $|x-y|\leq 8\rho$.

  \item[(2)] If $x,y$ belong to the same domain $U$ in Assumption
  \ref{ass:c1}, then for any $(\omega,t)$,
  \begin{eqnarray}\label{prp:f1}
    \nonumber &\big{|} D\Psi(x) a(t,x) (D\Psi(x))^{*}- D\Psi(y) a(t,y)
    (D\Psi(y))^{*}\big{|}\leq \delta_1, &\\
    &\big{|} D\Psi(x) \sigma(t,x) - D\Psi(y) \sigma(t,y) \big{|} \leq \delta_1&
  \end{eqnarray}
  if $|x-y|\leq 8\rho$, where the constant $\delta_1 =
  \delta(\kappa^2,T)$ and $\Psi$ is the inverse of $\Phi$.
\end{enumerate}

Then take a nonnegative function $\zeta\in C_{0}^{\infty}(\bR^{d})$ such that
$\textrm{supp}(\zeta)\subset B_{2\rho}(0)$, $\zeta(x)=1$ for $|x|\leq \rho$.
For any $z\in \mathbb{R}^d$, define for $x\in \bR^d$,
\begin{equation}\label{def:f1}
  \zeta^{z}(x)=\zeta(x-z),\quad p^{z}(t,x)=p(t,x)\zeta^{z}(x),\quad
  q^{z}(t,x)=q(t,x)\zeta^{z}(x).
\end{equation}
Then $(p^z,q^z)$ satisfies the following equation (in the sense of Definition
\ref{defn:b1} (ii))
\begin{eqnarray}\label{eq:g2}
  \begin{split}
  dp^z =& -\big[a^{ij}p^{z}_{x^i x^j} +\sigma^{ik}q^{z,k}_{x^i}+ \zeta^{z}F
  +(b^{i}\zeta^{z}-2a^{ij}\zeta^{z}_{x^j})p_{x^i}\\
  &~~~~~-(a^{ij}\zeta^{z}_{x^i x^j}
  +c)\zeta^{z}p
  +(\nu^{k}-\sigma^{ik}\zeta^{z}_{x^i})
  \zeta^{z}q^{k}\big]dt
  +q^{z,k} dW_{t}^{k}.
  \end{split}
\end{eqnarray}
In addition, define $$\eta^{z}(x)=\zeta(\frac{x-z}{2}).$$

Now consider the following two cases.
\medskip

\emph{Case 1.} $\textrm{dist}(z,\partial \mathcal{D})\leq 2\rho_0$. Then
$\mathcal{D}\cap \textrm{supp}(p^{z})\subset U \cap \mathcal{D}$, where $U$
is given in Assumption \ref{ass:c1}. Set $x=\Phi(y)$. Define
$$u^{z}(t,y)=p^{z}(t,x),\quad v^{z}(t,y)=q^{z}(t,x),
\quad (t,y) \in (0,T)\times \mathbb{R}^{d}_{y,+}.$$ Obviously
$(u^{z},v^{z})\in \mathbb{H}^{1}_{0}(\mathbb{R}^d_{y,+}) \times
\mathbb{H}^{0}(\mathbb{R}^d_{y,+};\bR^{d_1})$. Direct calculus shows that
\begin{eqnarray*}
  \begin{split}
    &p^{z}_{x^r x^s}(t,x)=\Psi^{i}_{x^r}(x)\Psi^{j}_{x^s}(x)u^{z}_{y^i y^j}(t,y)
    +(\zeta^{z}p_{x^i}+\zeta^{z}_{x^i}p)(t,x)\Psi^{i}_{x^r
    x^s}(x)\Phi^{r}_{y^i}(y)\\
    &q^{z,k}_{x^r}(t,x)=\Psi^{i}_{x^r}(x)v^{z,k}_{y^i}(t,y).
  \end{split}
\end{eqnarray*}
Substituting the above relations into equation \eqref{eq:g2}, it is not hard
to check that the functions $u^{z},v^{z}$ satisfy the BSPDE (in the sense of
Definition \ref{defn:b1} (ii))
\begin{equation}\label{eq:g1}
\left\{\begin{array}{ll}
  du^{z} = -\big(\tilde{a}^{ij}u^{z}_{y^iy^j}
  +\widetilde{\sigma}^{ik} v^{z,k}_{y^i}
  \widetilde{F} \big) dt + v^{z,k}dW_{t}^{k},\\
  u^{z}|_{\bR^{d}_{y,+}} = 0,~~~~u^{z}|_{t=T} = \zeta^{z}\phi,
\end{array}\right.
\end{equation}
where (observe that $u^{z}=0,v^{z}=0$ whenever $\eta^z \neq 1$)
\begin{eqnarray*}\begin{split}
& x=\Phi(y),\ x_0=\Phi(0),\ L^{0}=a^{rs}\partial^{2}_{x^r x^s},\\
& \widetilde{a}^{ij}(t,y) =
a^{rs}(t,x)\Psi^{i}_{x^r}\Psi^{j}_{x^s}(x)\eta^{z}(x)
+ a^{rs}(t,x_0)\Psi^{i}_{x^r}\Psi^{j}_{x^s}(x_0)(1-\eta^{z}(x)),\\
& \widetilde{\sigma}^{ik}(t,y) = \sigma^{rk}(t,x)\Psi^{i}_{x^r}(x)\eta^{z}(x)
+\sigma^{rk}(t,x_0)\Psi^{i}_{x^r}(x_0)(1-\eta^{z}(x)),\\
& \widetilde{F}(t,y) = (\zeta^{z}F)(t,x)+p_{x^r}(t,x)\Theta_{1}^{r}(t,y)
+p(t,x)\Theta_{2}(t,y)+q^{k}(t,x)\Theta_{3}^{k}(t,y),\\
& \Theta_{1}^{r}(t,y) = (\zeta^{z}L^{0}\Psi^{i})(t,x)\Phi^{r}_{y^i}(t,y)
+(b^{r}\zeta^{z}-2a^{rs}\zeta^{z}_{x^s})(t,x),\\
& \Theta_{2}(t,y) = (\zeta^{z}_{x^r}L^{0}\Psi^{i})(t,x)\Phi^{r}_{y^i}(t,y)
-(L^{0}\zeta^{z}+c\zeta^{z})(t,x),\\
& \Theta_{3}^{k}(t,y) = (\nu^{k}\zeta^{z}-\sigma^{rk}\zeta^{z}_{x^r}) (t,x).
\end{split}\end{eqnarray*}
In order to apply Proposition \ref{prop:e} to BSPDE \eqref{eq:g1}, we take
\begin{equation}
  a_{0}(t) = \widetilde{a}(t,0),~~~~\sigma_{0}(t) = \widetilde{\sigma}(t,0).
\end{equation}
Note that $\textrm{supp}(\eta^{z})\subset B_{4\rho}(z)$. Then it follows from
\eqref{prp:f1} that for any $y\in \bR^d_{y}$ and $x=\Phi(y)$, we have (recall
$x_{0}=\Phi(0)$)
\begin{eqnarray*}
  \begin{split}
    |\widetilde{a}(t,y)-\widetilde{a}(t,0)|&=|a^{rs}(t,x)\Psi^{i}_{x^r}\Psi^{j}_{x^s}(x)
    -a^{rs}(t,x_{0})\Psi^{i}_{x^r}\Psi^{j}_{x^s}(x_{0})|\cdot|\eta^{z}(x)|\\
    &\leq |a^{rs}(t,x)\Psi^{i}_{x^r}\Psi^{j}_{x^s}(x)
    -a^{rs}(t,x_{0})\Psi^{i}_{x^r}\Psi^{j}_{x^s}(x_{0})|\cdot 1_{B_{4\rho}(z)}\\
    &\leq \delta_{1}=\delta(\kappa^2,T).
  \end{split}
\end{eqnarray*}
Therefore, from Proposition \ref{prop:e}, BSPDE \eqref{eq:g1} has a unique
strong solution $(u,v)$ such that
$$ u \in\mathbb{H}^{1}_{0}\cap\bH^{2}(\mathbb{R}^{d}_{y,+}) , ~~~~v \in
\mathbb{H}^{1}_{0}(\mathbb{R}^{d}_{y,+};\bR^{d_1}).$$ It is clear that
$(u,v)$ is also a weak solution to BSPDE \eqref{eq:g1}. From the uniqueness
of the weak solution, we have $u=u^z$ and $v=v^z$. Hence we deduce that
\begin{equation}\label{prp:f2}
  u^{z} \in \mathbb{H}^{1}_{0}\cap\bH^{2}(\mathbb{R}^{d}_{y,+}),
  ~~~~v^{z} \in \mathbb{H}^{1}_{0}(\bR^{d}_{y,+};\bR^{d_1}).
\end{equation}
Denote $D(z,r)=B_{r}(z)\cap\cD$. Then \eqref{prp:f2} implies that restricted
on the domain $D(z,\rho)$, the solution
\begin{equation}\label{prp:f3}
  (p,q) \in \bH^{2}(D(z,\rho))\times\bH^{1}(D(z,\rho);\bR^{d_1})
\end{equation}
for any $z$ s.t. $\textrm{dist}(z,\partial \mathcal{D})\leq 2\rho_0$.

Now applying estimate \eqref{leq:e} to BSPDE \eqref{eq:g1}, we obtain that
\begin{eqnarray*}
  \intl u^z \intl_{2,\bR^{d}_{y,+}}^{2} + \intl v^z
  \intl_{1,\bR^{d}_{y,+}}^{2}
  +E\sup_{0\leq t\leq T}\|u^z(t,\cdot)\|_{1,\bR^{d}_{y,+}}^{2}~~~~~~~~~~~~~~~\\
  \leq C(\kappa,T) \big( \intl \widetilde{F} \intl_{0,\bR^{d}_{y,+}}^{2}+
  E\|(\zeta^{z}\phi)\circ\Phi\|_{1,\bR^{d}_{y,+}}^{2} \big).
\end{eqnarray*}
On the other hand, it is evident that (recall $\textrm{supp}(\zeta^z)\subset
B_{2\rho}(z)$)
\begin{eqnarray*}\begin{split}
  &\intl p \intl_{2,D(z,\rho)}^{2} + \intl q
  \intl_{1,D(z,\rho)}^{2}+E\sup_{0\leq t\leq T}\|p(t,\cdot)\|_{1,D(z,\rho)}^{2}\\
  &~~~~~\leq\intl p^z \intl_{2,\cD}^{2} + \intl q^z
  \intl_{1,\cD}^{2}+E\sup_{t\leq T}\|p^z(t,\cdot)\|_{1,\cD}^{2}\\
  &~~~~~\leq C\big(\intl u^z \intl_{2,\bR^{d}_{y,+}}^{2} + \intl v^z
  \intl_{1,\bR^{d}_{y,+}}^{2}
  +E\sup_{0\leq t\leq T}\|u^z(t,\cdot)\|_{1,\bR^{d}_{y,+}}^{2}\big),\\
  &\intl \widetilde{F} \intl_{0,\bR^{d}_{y,+}}^{2}
  \leq C\big(\intl F \intl_{0,D(z,2\rho)}^{2}
  + \intl p \intl_{1,D(z,2\rho)}^{2}
  +\intl q \intl_{0,D(z,2\rho)}^{2}\big),\\
  &E\|(\zeta^{z}\phi)\circ\Phi\|_{1,\bR^{d}_{y,+}}^{2}
  \leq C E\|\zeta^{z}\phi\|_{1,\cD}^{2}
  \leq C E\|\phi\|_{1,D(z,2\rho)}^{2},
  \end{split}
\end{eqnarray*}
where the constant $C$ depends only on $K$ and $\kappa$. Therefore, we obtain
\begin{eqnarray}\label{leq:f1}
  \begin{split}
    &\intl p \intl_{2,D(z,\rho)}^{2} + \intl q
    \intl_{1,D(z,\rho)}^{2}
    +E\sup_{0\leq t\leq T}\|p(t,\cdot)\|_{1,D(z,\rho)}^{2}\\
    &~\leq C\big( \intl F \intl_{0,D(z,2\rho)}^{2}
    +E\|\phi\|_{1,D(z,2\rho)}^{2} + \intl p \intl_{1,D(z,2\rho)}^{2}
    +\intl q \intl_{0,D(z,2\rho)}^{2}\big).
  \end{split}
\end{eqnarray}

\emph{Case 2.} $\textrm{dist}(z,\partial \mathcal{D})\geq 2\rho_0$. This case
can easily be reduced to the first one. Indeed, we can replace the domain
$\mathcal{D}$ by any half space with the boundary lying at a distance
$2\rho_0$ from $z$. In this situation it is not necessary to flatten the
boundary and to change coordinates. Then as above we deduce property
\eqref{prp:f3} for any $z\in \cD$ and obtain an estimate similar to
\eqref{leq:f1}.
\bigskip

Integrating both sides of inequality~\eqref{leq:f1} over $z\in
\mathbb{R}^{d}$, we obtain that
\begin{eqnarray}\label{leq:f2}
  \begin{split}
    &\intl p \intl_{2,\cD}^{2} + \intl q
    \intl_{1,\cD}^{2}+E\sup_{0\leq t\leq T}\|p(t,\cdot)\|_{1,\cD}^{2}\\
    &~\leq C\big( \intl F \intl_{0,\cD}^{2}
    +E\|\phi\|_{1,\cD}^{2} + \intl p \intl_{1,\cD}^{2}
    +\intl q \intl_{0,\cD}^{2}\big),
  \end{split}
\end{eqnarray}
where the constant $C$ depends on $K,\rho_{0},\kappa,T$, and the function
$\gamma$. Since $(p,q)\in \bH^1_0(\cD)\times\bH^0(\cD;\bR^{d_1})$, the
right-hand side is finite. Recalling that \eqref{prp:f3} holds true for any
$z\in \cD$, the above estimate implies that the unique weak solution of BSPDE
\eqref{eq:a1} and \eqref{con:a1} found by Lemma \ref{lem:b3} belongs to the
space $\cH^{2,1}(\cD)$, and moreover, $p\in C([0,T],L^{2}(\cD))\cap
L^{\infty}([0,T],H^{1}(\cD))~(a.s.)$. From Proposition \ref{prop:b1}, we know
that $(p,q)$ is the unique strong solution of BSPDE \eqref{eq:a1} and
\eqref{con:a1}.
\medskip

Replace the initial time zero by any $s\in[0,T)$. Proceeding identically as
before, we can obtain the following estimate similar to \eqref{leq:f2}
\begin{eqnarray}\label{leq:fs}
  \begin{split}
    &E \int_{s}^{T} \| p(t,\cdot) \|_{2,\cD}^{2}dt
    + E \int_{s}^{T} \| q(t,\cdot) \|_{1,\cD}^{2}dt
    +E\sup_{s\leq t\leq T}\|p(t,\cdot)\|_{1,\cD}^{2}\\
    &\leq C\left( \intl F \intl_{0,\cD}^{2}
    +E\|\phi\|_{1,\cD}^{2} + E \int_{s}^{T} \| p(t,\cdot) \|_{1,\cD}^{2}dt
    +E \int_{s}^{T} \| q(t,\cdot) \|_{0,\cD}^{2}dt \right).
  \end{split}
\end{eqnarray}

In view of the definition of the strong solution (Definition \ref{defn:b1}),
we know that the process $p(t,\cdot)$ is an $L^2(\cD)$-valued
semi-martingale. Then applying It\^o's formula for Hilbert-valued
semi-martingales (see e.g. \cite[Page 105]{PrZa92}), we have
\begin{eqnarray*}
    \begin{split}
      \|p(s,\cdot)\|_{0,\cD}^{2} = &\|\phi\|_{0,\cD}^{2}+2\int_{s}^{T}\int_{\cD}
      p \big(a^{ij}p_{x^i x^j}+b^{i}p_{x^i}-c p
      +\sigma^{ik}q^{k}_{x^i}+ \nu^{k}q^{k}+F\big) dx dt\\
      &~~~~ - \int_{s}^{T}\|q(t,\cdot)\|_{0,\cD}^{2}dt
      -2\int_{s}^{T}\int_{\cD}p q^k
      dx dW^k_t.
    \end{split}
\end{eqnarray*}
Taking expectations and using the Cauchy-Schwarz inequality, we have for any
$\eps>0$
\begin{eqnarray*}
    \begin{split}
      E \int_{s}^{T} \| q(t,\cdot) \|_{0,\cD}^{2}
       ~\leq ~ & E\|\phi\|_{0,\cD}^{2}+
      2 E \int_{s}^{T}\int_{\cD}
      p \big(a^{ij}p_{x^i x^j}+b^{i}p_{x^i}-c p
      +\sigma^{ik}q^{k}_{x^i}+ \nu^{k}q^{k}+F\big) dx dt\\
      \leq~ &E\|\phi\|_{0,\cD}^{2}
      + \eps E \int_{s}^{T}\big(\| p(t,\cdot) \|_{2,\cD}^{2}
      +\| q(t,\cdot) \|_{1,\cD}^{2}\big)dt \\
      &+ C(\eps,K) E \int_{s}^{T}\| p(t,\cdot) \|_{0,\cD}^{2}dt
      + \intl F \intl_{0,\cD}^{2}.
    \end{split}
\end{eqnarray*}
Taking $\eps$ small enough and recalling \eqref{leq:fs}, we have
\begin{eqnarray}\label{leq:g2}
  \begin{split}
    E \int_{s}^{T} \| p(t,\cdot) \|_{2,\cD}^{2}dt
    + E \int_{s}^{T} \| q(t,\cdot) \|_{1,\cD}^{2}dt
    +E\sup_{s\leq t\leq T}\|p(t,\cdot)\|_{1,\cD}^{2}\\
    \leq C\big( \intl F \intl_{0,\cD}^{2}
    +E\|\phi\|_{1,\cD}^{2}
    + E \int_{s}^{T} \| p(t,\cdot) \|_{1,\cD}^{2}dt\big),
  \end{split}
\end{eqnarray}
where the constant $C$ depends on $K,\rho_{0},\kappa,T$, and the function
$\gamma$. In particular, we have
\begin{equation*}
E \|p(s,\cdot)\|_{1,\cD}^{2} \leq C \bigg(\intl F \intl_{0,\cD}^{2}+E
\|\phi\|_{1,\cD}^{2} +E\int_{s}^{T}\| p(t,\cdot) \|_{1,\cD}^{2}dt\bigg).
\end{equation*}
Using the Gronwall inequality, we have
\begin{equation*}
\intl p\intl_{1,\cD}^{2} = \int_{0}^{T}E \|p(s,\cdot)\|_{1,\cD}^{2} \leq  C
e^{CT} \big(\intl F \intl_{0,\cD}^{2}+E \|\phi\|_{1,\cD}^{2}\big).
\end{equation*}
The last inequality along with \eqref{leq:g2} yields estimate \eqref{leq:g1}.
\medskip

It remains to prove $q\in \bH^{1}_{0}(\cD;\bR^{d_1})$. Since $q\in
\bH^{1}(\cD;\bR^{d_1})$, it is sufficient to check $q^{z}\in
\bH^{1}_{0}(\cD;\bR^{d_1})$ for each $z\in\partial\cD$ (recall
\eqref{def:f1}). Since $ v^{z}=q^{z}\circ \Phi
 \in \bH^{1}_{0}(\bR^{d};\bR^{d_1})$, by virtue of Lemma \ref{lem:f}, we get
$q^{z}\in \bH^{1}_{0}(\cD;\bR^{d_1})$. The proof is complete.
\end{proof}

\begin{rmk}
  The constant $C$ appearing in estimate
  \eqref{leq:g1} does not depend on $|a_{x}|$ and $|\sigma_x|$.
\end{rmk}

Proceeding identically to the proof of Proposition \ref{prop:f1}, we can
prove the following

\begin{prop}\label{prop:f2}
  Let the conditions of Theorem \ref{thm:c1} be satisfied. In addition,
  assume that the function pair $(p,q)\in \cH^{2,1}(\cD)$ is a strong
  solution of BSPDE \eqref{eq:a1} and \eqref{con:a1}.
  Then $(p,q)\in\fH^{2,1}(\cD)$,
  and there exists a constant $C$ only
  depending on $K,\rho_0,\kappa,T$ and the function $\gamma$ such that
  \begin{equation}\label{leq:g3}
    \|(p,q)\|_{\fH^{2,1}(\cD)}^{2}
    \leq C\big{(} \intl F \intl_{0,\cD}^{2} + E \|
    \phi\|_{1,\cD}^{2}\big{)}.
  \end{equation}
\end{prop}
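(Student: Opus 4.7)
The plan is to run the localization-and-flattening argument of Proposition \ref{prop:f1} verbatim, but now starting from the strong solution $(p,q)\in\cH^{2,1}(\cD)$ that is given to us, rather than first having to build one via the divergence form and Lemma \ref{lem:b3}. The Remark following Proposition \ref{prop:f1} already observes that its a priori bound does not depend on $|a_x|$ or $|\sigma_x|$; the extra assumption in Proposition \ref{prop:f1} was only used to guarantee \emph{existence} of a weak (hence strong) solution, which here is a hypothesis. So the same machinery should deliver the estimate \eqref{leq:g3} directly.

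Concretely, I would fix $\rho\in(0,\rho_0)$ small enough to meet the two smallness conditions of the earlier proof (i.e.\ $\gamma(8\rho)\le\delta(\kappa,T)$ and the transported coefficients are $\delta_1=\delta(\kappa^2,T)$-close to their boundary values), pick the same cutoff $\zeta^z$ supported in $B_{2\rho}(z)$, and set $p^z=\zeta^z p$, $q^z=\zeta^z q$. Since $(p,q)\in\cH^{2,1}(\cD)$ and $\zeta^z$ is smooth with compact support, $(p^z,q^z)\in\cH^{2,1}_0(\cD)$ is a strong solution of the cutoff equation \eqref{eq:g2}. For $z$ with $\operatorname{dist}(z,\partial\cD)\le 2\rho_0$ I pull back through the chart $\Phi$ to arrive at the half-space BSPDE \eqref{eq:g1} and apply Proposition \ref{prop:e} to $(u^z,v^z)=(p^z\circ\Phi,q^z\circ\Phi)$; for $z$ with $\operatorname{dist}(z,\partial\cD)>2\rho_0$ no flattening is needed and the interior reduction to a half-space whose boundary sits at distance $2\rho_0$ from $z$ yields the analogous bound. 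Integrating the local estimates over $z\in\bR^d$ produces \eqref{leq:f2}:
\begin{equation*}
\intl p\intl_{2,\cD}^2 + \intl q\intl_{1,\cD}^2 + E\sup_{[0,T]}\|p\|_{1,\cD}^2 \le C\bigl(\intl F\intl_{0,\cD}^2 + E\|\phi\|_{1,\cD}^2 + \intl p\intl_{1,\cD}^2 + \intl q\intl_{0,\cD}^2\bigr).
\end{equation*}

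To remove the tail $\intl p\intl_1^2+\intl q\intl_0^2$, I would apply the Hilbert-valued It\^o formula to $\|p(s,\cdot)\|_{0,\cD}^2$ exactly as at the end of the proof of Proposition \ref{prop:f1}, use super-parabolicity and Cauchy--Schwarz to absorb a small multiple of $\intl p\intl_2^2+\intl q\intl_1^2$ into the left-hand side, and then apply Gronwall to the resulting inequality for $E\|p(s,\cdot)\|_{1,\cD}^2$ (which is obtained by rerunning the localization from a generic initial time $s$) to eliminate the remaining $\intl p\intl_1^2$. Combining these yields \eqref{leq:g3}. The membership $q\in\bH^1_0(\cD;\bR^{d_1})$, needed to conclude $(p,q)\in\fH^{2,1}(\cD)$, follows as in Proposition \ref{prop:f1} from Lemma \ref{lem:f} applied to each $v^z$. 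The one genuine check in the whole scheme --- and the place where the hypothesis $(p,q)\in\cH^{2,1}(\cD)$ is essential --- is that the transformed free term $\widetilde F$ in \eqref{eq:g1} really lies in $\bH^0(\bR^d_+)$ so that Proposition \ref{prop:e} can be invoked in its strong form; this is immediate once we know $p\in\bH^2$ and $q\in\bH^1$, since $\widetilde F$ involves only $p_{x^ix^j}$, $q^k_{x^i}$, lower-order terms, and $F$.
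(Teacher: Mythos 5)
Your proposal is correct and is exactly the paper's intended argument: the paper proves Proposition \ref{prop:f2} by the single line ``proceeding identically to the proof of Proposition \ref{prop:f1}$\,$'', and you have correctly identified that the boundedness of $a_x,\sigma_x$ there served only to produce a solution via the divergence form and Lemma \ref{lem:b3}, while the localization, flattening, It\^o/Gronwall absorption, and the $q\in\bH^1_0$ step all go through unchanged for the given strong solution.
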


Now we use the standard method of continuation to prove Theorem
\ref{thm:c1}.

\begin{proof}[Proof of Theorem \ref{thm:c1}]
  The uniqueness of the strong solution to equation \eqref{eq:a1} is
  an immediate consequence of estimate~\eqref{leq:g3}.
  Now we define
  \begin{eqnarray*}
    \begin{split}
      &\cL_{0} = a^{ij}(t,0)D_{ij}+b^{i}(t,x)D_{i}-c(t,x),~~~
      &\cM^{k}_{0} = \sigma^{ik}(t,0)D_{i}+\nu^{k}(t,x),\\
      &\cL_{1} = a^{ij}(t,x)D_{ij}+b^{i}(t,x)D_{i}-c(t,x),~~~
      &\cM^{k}_{1} = \sigma^{ik}(t,x)D_{i}+\nu^{k}(t,x).
    \end{split}
  \end{eqnarray*}
  For $\lambda\in[0,1]$, define
  \begin{equation*}
    \cL_{\lambda} = (1-\lambda)\cL_{0} + \lambda\cL_{1} ,~~~
    \cM^{k}_{\lambda} = (1-\lambda)\cM^{k}_{0} + \lambda\cM^{k}_{1}.
  \end{equation*}
  Consider the following equation
  \begin{equation}\label{eq:h1}
    dp=-(\cL_{\lambda}p+\cM^{k}_{\lambda}q^{k}+F)dt+q^{k}dW^{k}_{t},
    ~~~~~~p\big|_{x\in\ptl\cD}=0,~~p\big|_{t=T}=\phi.
  \end{equation}
  It is clear that the coefficients of equation \eqref{eq:h1}
  satisfy the conditions
  of Theorem \ref{thm:c1} with the same $K,\kappa$ and $\gamma$.
  Hence a priori estimate \eqref{leq:g3} holds
  for equation \eqref{eq:h1} for each $\lambda\in[0,1]$
  with the same constant $C$ (i.e., independent of
  $\lambda$).

  Assume that for some $\lambda=\lambda_{0}\in[0,1]$, equation \eqref{eq:h1} is
  solvable, i.e., it has
  a unique strong solution $(p,q)$ such that $(p,q)\in \fH^{2,1}(\cD)$
  for any $F\in \bH^0(\cD)$ and
  any $\phi\in L^{2}(\Omega,\sF_{T},H^{1}_{0}(\cD))$.
  For other $\lambda\in[0,1]$, we
  can rewrite \eqref{eq:h1} as
  \begin{eqnarray*}
    \begin{split}
      dp=-\big\{\cL_{\lambda_{0}}p+\cM^{k}_{\lambda_{0}}q^{k}
      +(\lambda-\lambda_{0})\big[(\cL_{1}-\cL_{0})p
      +(\cM^{k}_{1}-\cM^{k}_{0})q^{k}\big]
      +F\big\}dt+q^{k}dW^{k}_{t}.
    \end{split}
  \end{eqnarray*}
  Thus for any $(u,v)\in \cH^{2,1}(\cD)$, the equation
  \begin{eqnarray*}
    \begin{split}
      dp=-\big\{\cL_{\lambda_{0}}p+\cM^{k}_{\lambda_{0}}q^{k}
      +(\lambda-\lambda_{0})\big[(\cL_{1}-\cL_{0})u
      +(\cM^{k}_{1}-\cM^{k}_{0})v^{k}\big]
      +F\big\}dt+v^{k}dW^{k}_{t},
    \end{split}
  \end{eqnarray*}
  with the boundary conditions $p|_{t=T}=\phi$ and $p|_{x\in\ptl\cD}=0$,
  has a unique strong
  solution $(p,q)$ such that $(p,q)\in \fH^{2,1}(\cD)$.
  Then we define the operator
  $$A:~~\cH^{2,1}(\cD)~\rightarrow~\cH^{2,1}(\cD)$$
  as follows: $$A(u,v) = (p,q).$$
  Note that $A(u,v)\in \fH^{2,1}(\cD)$.
  Then from estimate \eqref{leq:g3},
  we can easily obtain that for any
  $(u_{i},v_{i})\in \cH^{2,1}(\cD),~i=1,2$,
  \begin{eqnarray}\label{leq:h1}
    \|A(u_1-u_2,v_1-v_2)\|_{\cH^{2,1}(\cD)}^{2}\leq
    \|A(u_1-u_2,v_1-v_2)\|_{\fH^{2,1}(\cD)}^{2}\\
    \leq C |\lambda-\lambda_{0}|
    \|(u_1-u_2,v_1-v_2)\|_{\cH^{2,1}(\cD)}^{2}.
  \end{eqnarray}
  Recall that the constant $C$ in \eqref{leq:h1} is independent of $\lambda$.
  Set $\theta = (2C)^{-1}$. Then the operator is contraction in
  $\cH^{2,1}(\cD)$ as long as $|\lambda-\lambda_{0}|\leq\theta$,
  which implies that equation \eqref{eq:h1} is solvable if
  $|\lambda-\lambda_{0}|\leq\theta$.

  Equation
  \eqref{eq:h1} is solvable for $\lambda=0$ in view of Proposition \ref{prop:f1}.
  Starting from $\lambda=0$, we get to $\lambda=1$ in finite steps, and
  this finishes the proof of solvability of equation \eqref{eq:a1}. The
  proof of Theorem \ref{thm:c1} is complete.
\end{proof}

\section{Some applications}

\subsection{A comparison theorem}

The comparison theorem plays an essential role in the theory of PDEs
and BSDEs. Ma and Yong \cite{MaYo99} gives some comparison theorems
for strong solutions to the Cauchy problem of degenerate BSPDEs by
It\^o's formula, which are improved by Du and Meng \cite{DuMe09}
under the super-parabolicity condition. In this subsection, we prove
the following comparison theorem for the strong solution to BSPDE
\eqref{eq:a1} and \eqref{con:a1} in the general $C^2$ domain.

\begin{thm}\label{thm:i1}
  Let the conditions of Theorem \ref{thm:c1} be satisfied,
  and $(p,q)$ be the unique strong solution to
  BSPDE \eqref{eq:a1} and \eqref{con:a1}.
  Suppose for any $(\omega,t)$,
  $F(t,\cdot)\geq 0$ and $\phi\geq 0$.
  Then $p(t,\cdot)\geq 0$ a.s., for every $t\in [0,T]$.
\end{thm}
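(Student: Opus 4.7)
The strategy is to apply an It\^o-type formula to $\|p^{-}(t,\cdot)\|_{0,\cD}^{2}$, where $p^{-}:=\max(-p,0)$, and conclude via Gronwall's inequality that $p^{-}\equiv 0$. Since $p\in\bH^{1}_{0}\cap\bH^{2}(\cD)$ with $p(T,\cdot)=\phi\geq 0$, the standard chain rule for Sobolev functions gives $p^{-}\in\bH^{1}_{0}(\cD)$ with $Dp^{-}=-Dp\cdot 1_{\{p<0\}}$ a.e., and $p^{-}(T,\cdot)=0$. I would first reduce to the case in which $a,\sigma$ are differentiable in $x$ with bounded $x$-derivatives, reusing the approximation of Step~2 in the proof of Theorem~\ref{thm:c2}: smooth $(a,\sigma)$ uniformly by $(a_{r},\sigma_{r})\in C^{1}_{b}$ preserving super-parabolicity (up to a constant), so that the corresponding strong solutions satisfy $p_{r}\to p$ in $\cH^{2,1}_{0}(\cD)$; non-negativity of the $p_{r}$ then passes to the $L^{2}$-limit.

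Under that extra differentiability, let $\phi_{\eps}\in C^{2}(\bR)$ be a convex approximation of $r\mapsto(r^{-})^{2}$ with $\phi_{\eps}'(r)\to-2r^{-}$ and $\phi_{\eps}''(r)\to 2\cdot 1_{\{r<0\}}$. Applying It\^o's formula (via Lemma~\ref{lem:b5} or its semi-martingale variant) to $\int_{\cD}\phi_{\eps}(p)\,dx$, then letting $\eps\to 0$ and taking expectation, I expect to arrive at
\begin{equation*}
  E\|p^{-}(t)\|_{0,\cD}^{2}+E\!\int_{t}^{T}\!\int_{\cD}1_{\{p<0\}}|q|^{2}\,dxds
  =-2E\!\int_{t}^{T}\!\int_{\cD}p^{-}\bigl[a^{ij}D_{ij}p+b^{i}D_{i}p-cp+\sigma^{ik}D_{i}q^{k}+\nu^{k}q^{k}+F\bigr]dxds.
\end{equation*}
Since $F,p^{-}\geq 0$, the $F$-contribution on the right is non-positive. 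Integrating the second-order term by parts (now legitimate, since $a\in C^{1}_{b}$) and using $Dp^{-}=-Dp\cdot 1_{\{p<0\}}$ produces a dominant $-2\int a^{ij}D_{i}p^{-}D_{j}p^{-}dx\leq-\kappa\|Dp^{-}\|_{0,\cD}^{2}$ by super-parabolicity, plus corrections of the form $\eps\|Dp^{-}\|^{2}+C_{\eps}\|p^{-}\|^{2}$. The mixed term $-2\int p^{-}\sigma^{ik}D_{i}q^{k}dx$ integrates by parts (with vanishing boundary since $p^{-},q^{k}\in H^{1}_{0}$) and is controlled by $\eps(\|Dp^{-}\|^{2}+\int 1_{\{p<0\}}|q|^{2})+C_{\eps}\|p^{-}\|^{2}$ via Cauchy--Schwarz; the $b,c,\nu$ terms are handled analogously. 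Choosing $\eps$ small enough to absorb everything into the left-hand side yields $E\|p^{-}(t)\|_{0,\cD}^{2}\leq C\int_{t}^{T}E\|p^{-}(s)\|_{0,\cD}^{2}ds$, and Gronwall together with $p^{-}(T)=0$ forces $p^{-}\equiv 0$.

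The main obstacle is the rigorous derivation of the It\^o identity for the non-$C^{2}$ map $r\mapsto(r^{-})^{2}$ acting on the $L^{2}(\cD)$-valued semi-martingale $p$, which requires the $\phi_{\eps}$-smoothing and a careful passage to the limit using Lemma~\ref{lem:b5}, together with the bookkeeping needed to absorb the $q$-cross terms against the quadratic-variation contribution $\int 1_{\{p<0\}}|q|^{2}$ via the super-parabolicity condition $2a-\sigma\sigma^{*}\geq\kappa I$. The preliminary reduction to differentiable $(a,\sigma)$ is performed at the level of solutions rather than estimates, so the intermediate bounds may depend on $|Da_{r}|,|D\sigma_{r}|$ without contaminating the final limiting conclusion.
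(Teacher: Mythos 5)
Your proposal is correct and follows essentially the same route as the paper: first reduce to $a,\sigma$ with bounded $x$-derivatives by mollification and pass non-negativity to the limit of the approximating strong solutions, then, in the smooth-coefficient case, apply It\^o's formula to a $C^2$ convex regularization of $r\mapsto(r^-)^2$, integrate by parts, absorb the cross term $2\sigma^{ik}q^kD_ip$ using the super-parabolicity condition $2a-\sigma\sigma^*\geq\kappa I$ together with the quadratic-variation term $\int 1_{\{p<0\}}|q|^2$, and conclude by Gronwall. The paper's Lemma 5.1 is exactly your regularized It\^o step (stated slightly more generally, retaining $[F^-]^2$ and $[\phi^-]^2$ on the right-hand side), and its proof of the theorem is your approximation step.
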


The proof of Theorem \ref{thm:i1} needs the following lemma. In what follows,
we denote $a^- = -(a\wedge 0) $ for $a\in \bR$.

\begin{lem}\label{lem:i1}
  Let the conditions of Theorem \ref{thm:c1} be satisfied. In addition,
  assume that $a_{x}$ and $\sigma_{x}$ are bounded (by a constant $L$).
  Let $(p,q)$ be the strong solution of equation
  \eqref{eq:a2}. Then for some constant $C$,
  \begin{eqnarray}\label{leq:i1}
    \begin{split}
      E \int_{\cD}[p(t,x)^-]^{2}dx \leq e^{C(T-t)}\bigg\{
      E\int_{\cD} [\phi(x)^-]^{2}dx
      + E\int_{t}^{T}\int_{\cD}[F(s,x)^-]^{2}dxds\bigg\}.
    \end{split}
  \end{eqnarray}
\end{lem}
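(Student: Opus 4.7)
The plan is to derive a closed energy estimate for $\|p^{-}(t)\|_{0,\cD}^{2}$ by applying an It\^o-type formula to $\int_{\cD}(p^{-}(t,x))^{2}dx$ and then invoking Gronwall's inequality. Two regularity facts are needed at the outset: first, since $a_{x},\sigma_{x}$ are bounded the problem falls into the scope of Proposition \ref{prop:f1}, so $(p,q)\in\fH^{2,1}(\cD)$ and, in particular, $p$ is a continuous $L^{2}(\cD)$-valued semi-martingale with $p(t,\cdot)\in H^{1}_{0}(\cD)$ a.e.; second, a standard truncation result (Stampacchia) gives $p^{-}(t,\cdot)\in H^{1}_{0}(\cD)$ a.e.\ with Sobolev derivative $D(p^{-})=-\mathbf{1}_{\{p<0\}}Dp$.

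I would approximate $x\mapsto(x^{-})^{2}$ by a family of convex $C^{2}$ functions $\Phi_{\eps}$ with $\Phi_{\eps}(x)\to(x^{-})^{2}$, $\Phi_{\eps}'(x)\to-2x^{-}$, $\Phi_{\eps}''(x)\to 2\mathbf{1}_{\{x<0\}}$ pointwise and with uniformly bounded derivatives. Applying an It\^o formula for Hilbert-valued semi-martingales (in the spirit of Lemma \ref{lem:b5} and Krylov--Rozovskii) to the $C^{2}$ functional $u\mapsto\int_{\cD}\Phi_{\eps}(u)dx$ along $p(\cdot)$, then taking expectation and letting $\eps\to 0$ via dominated convergence (justified by $(p,q)\in\fH^{2,1}(\cD)$), I obtain
\begin{align*}
E\|p^{-}(t)\|_{0,\cD}^{2} = {}& E\|\phi^{-}\|_{0,\cD}^{2} + 2E\int_{t}^{T}\!\!\int_{\cD}p^{-}\big[(a^{ij}D_{j}p+\sigma^{ik}q^{k})_{x^{i}}\\
& {}+b^{i}D_{i}p-cp+\nu^{k}q^{k}+F\big]dxds - E\int_{t}^{T}\!\!\int_{\cD}\mathbf{1}_{\{p<0\}}|q|^{2}dxds.
\end{align*}
Integrating the divergence term by parts (using $p^{-}\in H^{1}_{0}(\cD)$ and $D(p^{-})=-\mathbf{1}_{\{p<0\}}Dp$) recasts it as a quadratic expression in $\xi:=D(p^{-})$ and $\eta:=\mathbf{1}_{\{p<0\}}q$, plus explicit lower-order interactions with $b,c,\nu,F$.

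The key algebraic step is to combine the three purely quadratic contributions via the pointwise identity
\begin{equation*}
-2a^{ij}\xi_{i}\xi_{j}+2\sigma^{ik}\xi_{i}\eta^{k}-|\eta|^{2} = -(2a^{ij}-\sigma^{ik}\sigma^{jk})\xi_{i}\xi_{j}-|\sigma^{T}\xi-\eta|^{2},
\end{equation*}
and Assumption \ref{ass:c2}: a Young-type splitting reserves a fraction $\delta\mathbf{1}_{\{p<0\}}|q|^{2}$ of the last term, leaving a bound $-(\kappa/2)|D(p^{-})|^{2}$ from the super-parabolicity, while the reserved $\delta\mathbf{1}_{\{p<0\}}|q|^{2}$ absorbs the cross-term $-2p^{-}\nu^{k}q^{k}$. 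The remaining terms are handled by boundedness of $b,c$ and Young's inequality: $2p^{-}b^{i}D_{i}(p^{-})$ absorbs an $\eps|D(p^{-})|^{2}$ into the gradient reservoir, $-2c(p^{-})^{2}$ is bounded by $2K(p^{-})^{2}$, and the source by $-2p^{-}F\leq 2p^{-}F^{-}\leq (p^{-})^{2}+(F^{-})^{2}$. Putting everything together gives
\begin{equation*}
E\|p^{-}(t)\|_{0,\cD}^{2}\leq E\|\phi^{-}\|_{0,\cD}^{2} + E\int_{t}^{T}\|F^{-}(s)\|_{0,\cD}^{2}ds + C\int_{t}^{T}E\|p^{-}(s)\|_{0,\cD}^{2}ds,
\end{equation*}
and Gronwall's inequality (backward in time) delivers \eqref{leq:i1}. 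The main obstacle is the rigorous justification of It\^o's formula for the merely Lipschitz functional $u\mapsto\|u^{-}\|_{0,\cD}^{2}$; this is handled by the smooth convex approximation $\Phi_{\eps}$ and dominated convergence on each term, the most delicate being the convergence of the quadratic variation contribution $\int_{\cD}\Phi_{\eps}''(p)|q|^{2}dx\to\int_{\cD}\mathbf{1}_{\{p<0\}}|q|^{2}dx$.
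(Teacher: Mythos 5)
Your proposal follows essentially the same route as the paper: a smooth convex approximation of $r\mapsto (r^-)^2$, It\^o's formula for Hilbert-valued semi-martingales applied after rewriting the equation in divergence form, control of the quadratic terms via the super-parabolicity condition $2a\geq \kappa I+\sigma\sigma^*$ together with a Young-type splitting that retains a fraction of $|q|^2$ to absorb the $\nu^k q^k$ term, and Gronwall; your completed-square identity is just a tidier packaging of the paper's direct estimate of $2a^{ij}D_ipD_jp+2\sigma^{ik}q^kD_ip+|q|^2$. Note only that your displayed It\^o identity should carry the factor $-2p^-$ (coming from $\frac{d}{du}(u^-)^2=-2u^-$) rather than $+2p^-$; this is a harmless slip, since the quadratic form $-2a^{ij}\xi_i\xi_j+2\sigma^{ik}\xi_i\eta^k-|\eta|^2$ you subsequently work with is the correct one.
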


\begin{proof}
  Define the function $h:\bR\rightarrow [0,\infty)$ as follows:
  \begin{equation*}
    h(r) =
    \left\{\begin{array}{ll}
      r^2, & r\leq -1,\\
      (6r^3+8r^4+3r^5)^2, & -1 \leq r \leq 0,\\
      0, & r\geq 0.
    \end{array}\right.
  \end{equation*}
  Then $h$ is $C^2$ and
  $$h(0)=h'(0)=h''(0)=0,\ h(-1)=1,\ h'(-1)=-2,\ h''(-1)=2.$$
  For any $\eps > 0$, define $h_{\eps}(r)=\eps^{2}h(r/\eps)$. The function
  $h_{\eps}$ has the following properties:
  \begin{eqnarray*}
    \begin{split}
    &\lim_{\eps\rrow 0}h_{\eps}(r)=(r^-)^2,~~~
    \lim_{\eps\rrow 0}h_{\eps}'(r)=-2r^-,\quad \textrm{uniformly};\\
    &|h_{\eps}''(r)|\leq C,\quad \forall \eps > 0,r\in\bR;~~~~~
    \lim_{\eps\rrow 0}h_{\eps}''(r)= \left\{\begin{array}{ll}
    2,& r<0,\\
    0,& r>0.
    \end{array}\right.
    \end{split}
  \end{eqnarray*}

  Since $a_{x}$ and $\sigma_{x}$ exists and they are bounded, equation~\eqref{eq:a1} can
  be written into the divergence form. Then applying It\^o's formula for
  Hilbert-valued semi-martingales (see e.g. \cite[Page 105]{PrZa92}) to
  $h_{\eps}(p(t,\cdot))$, and from Green's formula,
  we obtain that
  \begin{eqnarray*}
    \begin{split}
      &E\int_{\cD}h_{\eps}(\phi(x))dx - E\int_{\cD}h_{\eps}(p(t,x))dx\\
      &=E\int_{t}^{T}\int_{\cD} \bigg\{ -h_{\eps}'(p)D_{i}(a^{ij}D_{j}p
      +\sigma^{ik}q^{k})-h_{\eps}'(p)\big[(b^{i}-D_{j}a^{ij})D_{i}p\\
      &~~~~~~~~-c p + (\nu^{k}-D_{i}\sigma^{ik})q^{k}+F\big]
      + \frac{1}{2}h_{\eps}''(p)|q|^{2}\bigg\}dx dt\\
      &=E\int_{t}^{T}\int_{\cD} \bigg\{ \frac{1}{2}h_{\eps}''(p)
      \big(2a^{ij}D_{i}pD_{j}p+2\sigma^{ik}q^{k}D_{i}p+|q|^{2}
      \big)\\
      &~~~~~~~~-h_{\eps}'(p)\big[(b^{i}-D_{j}a^{ij})D_{i}p
      -c p + (\nu^{k}-D_{i}\sigma^{ik})q^{k}+F\big]
      \bigg\}dx dt.
    \end{split}
  \end{eqnarray*}
  Setting $\eps\rrow 0$, by Lebesgue's Dominated Convergence Theorem, we
  have
  \begin{eqnarray*}
    \begin{split}
      &E\int_{\cD}[\phi(x)^{-}]^{2}dx - E\int_{\cD}[(p(t,x)^-]^{2}dx\\
      &=E\int_{t}^{T}\int_{\cD} 1_{\{p\leq 0\}}\cdot\big\{
      \big(2a^{ij}D_{i}pD_{j}p+2\sigma^{ik}q^{k}D_{i}p+|q|^{2}
      \big)\\
      &~~~~~~~~-2p\big[(b^{i}-D_{j}a^{ij})D_{i}p
      -c p + (\nu^{k}-D_{i}\sigma^{ik})q^{k}+F\big]
      \big\}dx dt.
    \end{split}
  \end{eqnarray*}
  For positive numbers $\delta,\delta_1$, we have
  \begin{eqnarray*}
    \begin{split}
      2a^{ij}D_{i}pD_{j}p+2\sigma^{ik}q^{k}D_{i}p+|q|^{2}
      &\geq 2a^{ij}D_{i}pD_{j}p-(1+\delta)|\sigma^{i}D_{i}p|^{2}
      +\frac{\delta}{1+\delta}|q|^{2}\\
      &\geq [-2\delta K + (1+\delta)\kappa]|D p|^{2}
      +\frac{\delta}{1+\delta}|q|^{2},\\
    \end{split}
  \end{eqnarray*}
  $$-p\big[(b^{i}-D_{j}a^{ij})D_{i}p
  -c p + (\nu^{k}-  D_{i}\sigma^{ik})q^{k}\big]
  \geq -\delta_{1}(|Dp|^{2}+|q|^{2}) - C(K,L)\delta_{1}^{-1}|p|^{2}.$$
  Taking $\delta$ and $\delta_{1}$ small enough (such that $\delta_1
  =\min\{-2\delta K + (1+\delta)\kappa,\frac{\delta}{1+\delta}\}>0$),
  we have
  \begin{eqnarray*}
    \begin{split}
      &E\int_{\cD}[\phi(x)^{-}]^{2}dx - E\int_{\cD}[(p(t,x)^-]^{2}dx\\
      &\geq E\int_{t}^{T}\int_{\cD} 1_{\{p\leq 0\}}\cdot\big[
      - C(\kappa,K,L)|p|^{2} - 2 p F
      \big]dx dt\\
      &\geq E\int_{t}^{T}\int_{\cD} \big[
      - C(\kappa,K,L)|p^{-}|^{2} - 2 p^{-} F^{-}
      \big]dx dt\\
      &\geq E\int_{t}^{T}\int_{\cD} \big[
      - C(\kappa,K,L)|p^{-}|^{2} - |F^{-}|^{2}
      \big]dx dt,
    \end{split}
  \end{eqnarray*}
  and this along with the Gronwall inequality implies the desired inequality
  \eqref{leq:i1}.
\end{proof}

\begin{proof}[Proof of Theorem \ref{thm:i1}]
  Due to Assumption \ref{ass:c3}, we can construct (e.g., by the
  standard technique of mollification) two sequences $a_{n}$ and $\sigma_n$
  with bounded first derivatives in $x$, which converge uniformly (w.r.t.
  $(\omega,t,x)$) to $a$ and $\sigma$, respectively, with $a_{n},\sigma_{n}$
  satisfying the same assumptions as $a,\sigma$ (with $\kappa^{2}$ instead of
  $\kappa$).
  Then, in view of Theorem \ref{thm:c1}, the following BSPDE (for each $n$)
  \begin{equation*}
  \left\{\begin{array}{l}
    dp_{n}=-\big(a_{n}^{ij}D_{ij}p_{n}+b^{i}D_{i}p_{n}-cp_{n}
    +\sigma_{n}^{ik}D_{i}q_{n}^{k}+\nu^{k}q_{n}^{k}+F\big) dt
    +q_{n}^{k}dW_{t}^{k},\\
    p_{n}|_{x\in\ptl\cD}=0,~~~~~~
    p_{n}|_{t=T}=\phi
  \end{array}\right.
  \end{equation*}
  has a unique strong solution $(p_{n},q_{n})\in \bH^{2}\times\bH^{1}$, such
  that
  \begin{equation}\label{leq:i2}
    \intl p_{n} \intl_{2}^{2} +\intl q_{n} \intl_{1}^{2}
    +E\sup_{0\leq t\leq T}\|p_{n}(t,\cdot)\|_{1}^{2}
    \leq C\big{(} \intl F \intl_{0}^{2} + E \|
    \phi\|_{1}^{2}\big{)},
  \end{equation}
  where the constant $C$ only depends on $K,\rho_0,\kappa,T$ and the function
  $\gamma$, and does not depend on $n$.
  It is easy to check that the function pair $(p-p_{n},q-q_{n})$ satisfies
  the following BSPDE
  \begin{equation*}
  \left\{\begin{array}{l}
    du=-\big(a^{ij}D_{ij}u+b^{i}D_{i}u-cu
    +\sigma^{ik}D_{i}v^{k}+\nu^{k}v^{k} + F_{n}\big) dt
    +v^{k}dW_{t}^{k},\\
    u|_{x\in\ptl\cD}=0,~~~~~~u|_{t=T}=0,
    \end{array}\right.
  \end{equation*}
  with $u$ and $v$ being the unknown functions, where
  \begin{equation*}
    F_{n}=(a^{ij}-a_{n}^{ij})D_{ij}p_{n}
    +(\sigma^{ik}-\sigma_{n}^{ik})D_{i}q_{n}^{k}.
  \end{equation*}
  In view of \eqref{leq:i2} and keeping in mind the uniform convergence of
  $a_{n}$ and $\sigma_{n}$, we have
  $$\intl F_{n} \intl_{0}\rrow 0\quad\textrm{as } n\rrow\infty,$$
  and this along with estimate \eqref{leq:c1} implies that for every
  $t\in[0,T]$,
  $$p_{n}(t,\cdot)\rrow p(t,\cdot),~~~~\textrm{strongly in}~
  L^{2}(\Omega\times\cD).$$
  On the other hand, it follows from Lemma \ref{lem:i1} that
  $p_{n}(t,\cdot)\geq 0$ a.s. for every $t\in[0,T]$. Hence we
  get $p(t,\cdot)\geq 0$ a.s. for every $t\in[0,T]$.
  The proof is complete.
\end{proof}

\subsection{Semi-linear equations in $C^2$ domains}

Consider the following BSPDE:
\begin{equation}\label{eq:j1}
  \left\{\begin{array}{l}
  \begin{split}
    dp(t,x)= & - \big[ a^{ij}(t,x)D_{ij}p(t,x) +
    \sigma^{ik}(t,x)D_{i}q^{k}(t,x)+ F(t,x,p(t,x),q(t,x))\big]dt\\
    & + q^{k}(t,x)dW_{t}^{k},
    \quad (t,x)\in [0,T)\times \cD;
  \end{split}\\
  \begin{split}
    &p(t,x)=0,~~~~t\in [0,T],\ x\in\partial \mathcal{D};\\
    &p(T,x)=\phi(x),~~~~x\in \mathcal{D}.
  \end{split}
  \end{array}\right.
\end{equation}
Such a BSPDE is associated to a FBSDE in a similar way as shown in Tang
\cite{Tang05}.

\begin{ass}\label{ass:j1}
  For any $(u,v)\in (H^{1}_{0}(\cD)\cap H^{2}(\cD))\times
  H^{1}(\cD;\bR^{d_1})$,
  the function $F(t,x,u,v)$
  is predictable as a function taking values in $H^{0}(\cD)$.
  The function $F$ is continuous in $(u,v)$. Moreover,
  for any $\eps>0$, there exists a constant $K_{\eps}$ such that, for any
  $(u_{i},v_{i})\in (H^{1}_{0}(\cD)\cap H^{2}(\cD))
  \times H^{1}(\cD;\bR^{d_1}),~i=1,2$
  and any $(\omega,t)$, we
  have
  \begin{eqnarray}\label{con:j1}
    \nonumber \|F(t,\cdot,u_{1},v_{1})-F(t,\cdot,u_{2},v_{2})\|_{0,\cD}
    \leq \eps\big(\|u_{1}-u_2\|_{2,\cD}+\|v_1-v_2\|_{1,\cD}\big)~~~~~~\\
    +K_\eps\big(\|u_1-u_2\|_{0,\cD}+\|v_1-v_2\|_{0,\cD}\big).
  \end{eqnarray}
\end{ass}

\begin{rmk}
  Take $F(t,x,u,v)=b^{i}D_{i}u-cu+\nu^{k}v^k$, and equation \eqref{eq:j1}
  becomes BSPDE \eqref{eq:a1} and \eqref{con:a1}.
  Under the boundedness of $b,c$ and
  $\nu$, we can easily check condition \eqref{con:j1} by the
  interpolation inequality. On the other hand,
  we shall see that the strong solution of BSPDE
  \eqref{eq:j1} belongs to $\cH^{2,1}_{0}(\cD)$ ($\subset\cH^{2,1}(\cD)$),
  which  allows us to discuss
  equation \eqref{eq:a1} with the coefficients $c,\nu$ blowing up near the
  boundary of $\cD$. However, we prefer not to purse this issue
  any further in this paper.
\end{rmk}

Then we have the following

\begin{thm}\label{thm:j1}
  Let the functions $a$ and $\sigma$ satisfy Assumptions
  $\ref{ass:c2}$ and $\ref{ass:c3}$. Let Assumptions \ref{ass:c1} and
  \ref{ass:j1} be satisfied. Suppose
  \begin{equation}\label{con:j2}
    F(\cdot,\cdot,0,0)\in\bH^{0}(\cD),~~~~~~
    \phi\in L^{2}(\Omega,\sF_{T},H^{1}_{0}(\cD)).
  \end{equation}
  Then BSPDE \eqref{eq:j1} has a unique strong solution
  $(p,q)\in \fH^{2,1}(\cD)$ such that
  \begin{equation}\label{leq:j2}
    \|(p,q)\|_{\fH^{2,1}(\cD)}^{2}
    \leq C\big{(} \intl F(\cdot,\cdot,0,0) \intl_{0,\cD}^{2} + E \|
    \phi\|_{1,\cD}^{2}\big{)},
  \end{equation}
  where the constant $C$ only depends on $K,\rho_{0},\kappa,T,$ the
  functions $\gamma$ and $K_{\eps}$.
\end{thm}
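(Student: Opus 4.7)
The plan is a Banach fixed-point argument in $\fH^{2,1}(\cD)$ whose underlying linear solvability is supplied by Theorem~\ref{thm:c1}. Given $(u,v)\in\fH^{2,1}(\cD)$, define $G(t,x):=F(t,x,u(t,x),v(t,x))$; taking $(u_2,v_2)=(0,0)$ in \eqref{con:j1} together with \eqref{con:j2} yields $G\in\bH^0(\cD)$. Theorem~\ref{thm:c1} (applied with $b^i=c=\nu^k\equiv 0$ and free term $G$) then produces a unique strong solution $(p,q)\in\fH^{2,1}(\cD)$ of the linear Cauchy--Dirichlet problem
\[
  dp=-\bigl[a^{ij}D_{ij}p+\sigma^{ik}D_iq^k+G\bigr]dt+q^k\,dW^k_t,\quad p|_{\partial\cD}=0,\ p(T)=\phi.
\]
Set $\Psi(u,v):=(p,q)$; a fixed point of $\Psi$ is exactly a strong solution of \eqref{eq:j1}.

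\textbf{Difference estimate.} For $(u_i,v_i)\in\fH^{2,1}(\cD)$, $i=1,2$, the difference $(P,Q):=\Psi(u_1,v_1)-\Psi(u_2,v_2)$ solves the same linear BSPDE with zero terminal--boundary data and free term $F(\cdot,u_1,v_1)-F(\cdot,u_2,v_2)$. Combining \eqref{leq:g3} from Proposition~\ref{prop:f2} with \eqref{con:j1} yields, for every $\eps>0$,
\[
  \|(P,Q)\|_{\fH^{2,1}(\cD)}^2\leq 2C_0\eps^2\,\|(u_1-u_2,v_1-v_2)\|_{\cH^{2,1}(\cD)}^2+2C_0K_\eps^2\,\intl(u_1-u_2,v_1-v_2)\intl_{\bH^0\times\bH^0}^2,
\]
with $C_0$ the constant from Theorem~\ref{thm:c1}. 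I would first fix $\eps$ so that $2C_0\eps^2<\tfrac14$, absorbing the strong-norm piece. The $K_\eps$-term must then be beaten by a different device, since $K_\eps$ is unavoidably large.

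\textbf{Short-interval iteration.} The remaining $K_\eps$-contribution can be tamed by iterating backward on subintervals $[T-\tau,T],[T-2\tau,T-\tau],\ldots$ of a small common length $\tau$. On each such subinterval the sup-in-time component of $\fH^{2,1}$ gives
\[
  \intl u_1-u_2\intl_{\bH^0([T-\tau,T])}^2\leq\tau\,E\sup_{T-\tau\leq t\leq T}\|u_1-u_2\|_{0,\cD}^2\leq\tau\,\|(u_1-u_2,v_1-v_2)\|_{\fH^{2,1}([T-\tau,T])}^2,
\]
while the $\bH^0$-norm of the $v$-component is controlled by an auxiliary It\^o-formula computation applied to $\|Q\|_{0,\cD}^2$ (in the spirit of the tail of the proof of Proposition~\ref{prop:f1}), which places $\intl Q\intl_{0,\cD}^2$ into the same Gronwall-absorbable bracket while gaining an extra factor of $\tau$ out of the duality pairing between $Q$ and the source. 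Choosing $\tau$ so that the resulting prefactor is $<\tfrac12$ makes $\Psi$ a strict contraction on $\fH^{2,1}([T-\tau,T],\cD)$, and Banach's theorem supplies the unique fixed point on $[T-\tau,T]$. Iterating backward reaches $t=0$ in finitely many steps; at each junction $p(T-k\tau)$ lies in $H^1_0(\cD)$ thanks to $(p,q)\in\cH^{2,1}_0(\cD)$, where one may perturb the partition by a null set to hit almost-every times at which the solution takes values in $H^1_0$.

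\textbf{Global estimate and main obstacle.} Feeding the global fixed point back into \eqref{leq:g3} and repeating the contraction estimate with $(u_2,v_2)=(0,0)$ then delivers \eqref{leq:j2} after absorption and a final Gronwall argument over the same partition. The chief obstacle is the asymmetry in \eqref{con:j1} between the $u$- and $v$-components: whereas $u$ enjoys sup-in-time $H^1$-control inside $\fH^{2,1}$, the $v$-component does not, so extracting the $\tau$-factor from $\intl v_1-v_2\intl_{\bH^0([T-\tau,T])}$ needed for a genuine contraction requires the auxiliary It\^o-formula bound on $Q$ together with careful bookkeeping of the Young/Cauchy--Schwarz constants at each absorption step.
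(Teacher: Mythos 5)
There is a genuine gap, and it sits exactly where you flag the ``chief obstacle'': the term $K_\eps\|v_1-v_2\|_{0,\cD}$ in \eqref{con:j1} cannot be made small by shrinking the time interval, and your proposed repair does not repair it. On $[T-\tau,T]$ you correctly gain a factor $\tau$ for $\intl u_1-u_2\intl_{\bH^0}^2$ via the sup-in-time control of the $u$-component, but for the $v$-component of an \emph{arbitrary} input pair $(u_i,v_i)\in\fH^{2,1}$ all you have is $\intl v_1-v_2\intl_{\bH^0([T-\tau,T])}^2\leq \intl v_1-v_2\intl_{\bH^1([T-\tau,T])}^2$, with constant $1$ and no gain in $\tau$. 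The ``auxiliary It\^o-formula computation applied to $\|Q\|_{0,\cD}^2$'' controls the \emph{output} $Q=\Psi(u_1,v_1)-\Psi(u_2,v_2)$ (and is indeed how one removes $\intl q\intl_0^2$ from an a priori estimate, where $(p,q)$ is an actual solution); it says nothing about the \emph{input} difference $v_1-v_2$, which is not the solution of anything and to which no It\^o formula applies. So after fixing $\eps$ your difference estimate still carries the uncontrolled term $2C_0K_\eps^2\intl v_1-v_2\intl_{\bH^0}^2$, and $\Psi$ is not a contraction on $\fH^{2,1}([T-\tau,T])$ no matter how small $\tau$ is. A direct Picard iteration can be salvaged, but only by working in an equivalent norm of the form $\|(u,v)\|_{\cH^{2,1}}^2+\mu\,\intl(u,v)\intl_{\bH^0\times\bH^0}^2$ with $\mu\sim K_\eps^2$ together with an exponential-in-time weight $e^{\beta t}$ that yields a $1/\beta$ gain on the $\bH^0$-part of the \emph{output} via the energy identity; none of this bookkeeping is in your write-up, and it also requires a weighted version of the linear estimate.

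The paper avoids the difficulty entirely by splitting the argument differently. Step 1 proves the a priori estimate \eqref{leq:j2} (and uniqueness/stability) for genuine solutions: there It\^o's formula on $\|p(t,\cdot)\|_{0,\cD}^2$ is legitimate, the $\eps$-part of \eqref{con:j1} is absorbed into the left-hand side, and the $K_\eps$-part is killed by Gronwall in $\intl p\intl_0^2$ after $\intl q\intl_0^2$ has been bounded through the energy identity. Step 2 then runs the method of continuation in the parameter $\lambda$ of $dp=-[a^{ij}D_{ij}p+\sigma^{ik}D_iq^k+\lambda F(t,x,p,q)]dt+q^k\,dW^k_t$: the fixed-point map at level $\lambda$ freezes only the increment $(\lambda-\lambda_0)F(t,x,u,v)$, so the contraction factor is $C\,|\lambda-\lambda_0|$ times a constant that is allowed to depend on $K_\eps$, and smallness comes from $|\lambda-\lambda_0|$ rather than from $\eps$, $\tau$, or any control on $\|v_1-v_2\|_{0,\cD}$. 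Since the a priori constant is uniform in $\lambda$, finitely many steps reach $\lambda=1$ from the linear case $\lambda=0$ settled by Theorem \ref{thm:c1}. If you want to keep a direct fixed-point scheme, you must either adopt the weighted combined norm sketched above or switch to the continuation argument; the short-interval iteration as written does not close.
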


\begin{proof}
  \emph{Step 1.}
  First, we prove the a priori estimate \eqref{leq:j2} for
  the strong solution of equation \eqref{eq:j1}.

  From estimate \eqref{leq:c1}, there exists a constant $C$ depending
  only $K,\rho_{0},\kappa,T,$ and the function $\gamma$, such that
  \begin{equation*}
    \|(p,q)\|_{\fH^{2,1}(\cD)}^{2}
    \leq C\big{(} \intl F(\cdot,\cdot,p,q) \intl_{0,\cD}^{2} + E \|
    \phi\|_{1,\cD}^{2}\big{)}.
  \end{equation*}
  On the other hand, in view of Assumption \ref{ass:j1}, we have
  \begin{eqnarray}\label{leq:j6}\begin{split}
    &\intl F(\cdot,\cdot,p,q) \intl_{0,\cD}^{2}
    \leq 2\intl F(\cdot,\cdot,p,q)-F(\cdot,\cdot,0,0) \intl_{0,\cD}^{2}
    +2\intl F(\cdot,\cdot,0,0) \intl_{0,\cD}^{2}\\
    &~~\leq 4\eps^2\big(\intl p \intl_{2,\cD}^{2}+\intl q \intl_{1,\cD}^{2}\big)
    + 4K_{\eps}^2\big(\intl p \intl_{0,\cD}^{2}+\intl q \intl_{0,\cD}^{2}\big)
    + 2\intl F(\cdot,\cdot,0,0) \intl_{0,\cD}^{2}.
    \end{split}
  \end{eqnarray}
  Therefore, taking $\eps=(8C)^{-2}$, we have
  \begin{equation}\label{leq:j3}
    \|(p,q)\|_{\fH^{2,1}(\cD)}^{2}
    \leq C \big{(} \intl F(\cdot,\cdot,0,0) \intl_{0,\cD}^{2} + E \|
    \phi\|_{1,\cD}^{2}+
    \intl p \intl_{0,\cD}^{2}+\intl q \intl_{0,\cD}^{2}\big{)}.
  \end{equation}
  Proceeding similarly as in the proof of Proposition
  \ref{prop:f1}, we can remove the last two terms in the last inequality.
  Indeed, applying It\^o's formula for Hilbert-valued semi-martingales to
  $[p(t,\cdot)]^2$, we have
  \begin{eqnarray*}
    \begin{split}
      \|p(0,\cdot)\|_{0,\cD}^{2} = &\|\phi\|_{0,\cD}^{2}+2\int_{0}^{T}\int_{\cD}
      p \big(a^{ij}D_{ij}p+\sigma^{ik}D_{i}q^{k}+ F(t,x,p,q)\big) dx dt\\
      &~~~~ - \int_{0}^{T}\|q(t,\cdot)\|_{0,\cD}^{2}dt
      -2\int_{0}^{T}\int_{\cD}p q^k
      dx dW^k_t.
    \end{split}
  \end{eqnarray*}
  Taking expectations, using the Cauchy-Schwarz inequality, and keeping
  \eqref{leq:j6} in mind, we have
  for any $\delta>0$
  \begin{eqnarray*}
    \begin{split}
      \intl q \intl_{0,\cD}^{2} ~\leq ~ & E\|\phi\|_{0,\cD}^{2}+
      2 E \int_{0}^{T}\int_{\cD}
      p \big(a^{ij}D_{ij}p+\sigma^{ik}D_{i}q^{k}+ F(t,x,p,q)\big) dx dt\\
      \leq~ &E\|\phi\|_{0,\cD}^{2} + \delta\big(\intl p \intl_{2,\cD}^{2}
      +\intl q \intl_{1,\cD}^{2}
      + \intl F(\cdot,\cdot,p,q) \intl_{0,\cD}^{2}\big)
      + C(\delta,K)\intl p \intl_{0,\cD}^{2}\\
      \leq~&E\|\phi\|_{0,\cD}^{2} + \delta\big(\intl p \intl_{2,\cD}^{2}
      +\intl q \intl_{1,\cD}^{2}\big) + C(\delta,K)\big(
      \intl p \intl_{0,\cD}^{2}
      + \intl F(\cdot,\cdot,0,0) \intl_{0,\cD}^{2}\big).
    \end{split}
  \end{eqnarray*}
  Taking $\delta$ small enough and repeating \eqref{leq:j3}, we have
  \begin{equation*}
    \|(p,q)\|_{\fH^{2,1}(\cD)}^{2}
    \leq C \big{(} \intl F(\cdot,\cdot,0,0) \intl_{0,\cD}^{2} + E \|
    \phi\|_{1,\cD}^{2}+
    \intl p \intl_{0,\cD}^{2}\big{)},
  \end{equation*}
  where the constant $C$ only depends on $K,\rho_{0},\kappa,T,$ and the
  functions $\gamma$ and $K_{\eps}$.
  Using the Gronwall inequality, we obtain a priori estimate
  \eqref{leq:j2}.

  Furthermore, from a similar argument as above, we can prove
  the uniqueness of the strong solution of BSPDE \eqref{eq:j1}.\medskip

  \emph{Step 2.}
  We use the method of continuation to prove the solvability of BSPDE
  \eqref{eq:j1}. For each $\lambda\in[0,1]$, we consider the equation
  \begin{equation}\label{eq:j2}
    dp = -\big[ a^{ij}D_{ij}p+\sigma^{ik}D_{i}q+\lambda F(t,x,p,q)\big]dt
    +q^{k}dW_{t}^{k}, ~~~~p|_{x\in\ptl\cD}=0,~~p|_{t=T}=\phi.
  \end{equation}
  It is clear that the function $\lambda F$ satisfies Assumption \ref{ass:j1}
  with the same $K_\eps$ as $F$, and then equation \eqref{eq:j2} has a priori
  estimate \eqref{leq:j2} for each $\lambda$ with the same constant $C$.

  Assume that for $\lambda=\lambda_{0}\in[0,1]$, BSPDE \eqref{eq:j2} has
  a unique strong solution $(p,q)\in \fH^{2,1}(\cD)$,
  for any $\phi\in L^{2}(\Omega,\sF_{T},H^{1}_{0}(\cD))$ and
  any $F\in \bH^0(\cD)$ satisfying Assumption \ref{ass:j1} and condition
  \eqref{con:j2}.
  For other $\lambda\in[0,1]$, we
  can rewrite \eqref{eq:h1} as
  \begin{eqnarray*}
    \begin{split}
      dp=-\big\{a^{ij}D_{ij}p+\sigma^{ik}D_{i}q+\lambda_{0}
      F(t,x,p,q)+(\lambda-\lambda_{0})F(t,x,p,q)
      \big\}dt+q^{k}dW^{k}_{t}.
    \end{split}
  \end{eqnarray*}
  Thus for any $(u,v)\in \cH^{2,1}(\cD)$, the equation
  \begin{eqnarray*}
    \begin{split}
      dp=-\big\{a^{ij}D_{ij}p+\sigma^{ik}D_{i}q+\lambda_{0}
      F(t,x,p,q)+(\lambda-\lambda_{0})F(t,x,u ,v)\big\}dt+v^{k}dW^{k}_{t},
    \end{split}
  \end{eqnarray*}
  with the boundary conditions $p|_{t=T}=\phi$ and $p|_{x\in\ptl\cD}=0$,
  has a unique strong
  solution $(p,q)\in \fH^{2,1}(\cD)$.
  Then define the operator
  $$A:~~\cH^{2,1}(\cD)~\rightarrow~\cH^{2,1}(\cD)$$
  as $A(u,v) = (p,q).$ Note that $A(u,v)\in \fH^{2,1}(\cD)$.
  Proceeding similarly as in Step 1,
  we can easily obtain that for any
  $(u_{i},v_{i})\in \cH^{2,1}(\cD),~i=1,2$,
  \begin{eqnarray}\label{leq:j4}
    \|A(u_1-u_2,v_1-v_2)\|_{\cH^{2,1}(\cD)}^{2}\leq
    \|A(u_1-u_2,v_1-v_2)\|_{\fH^{2,1}(\cD)}^{2}\\
    \leq C |\lambda-\lambda_{0}|
    \|(u_1-u_2,v_1-v_2)\|_{\cH^{2,1}(\cD)}^{2}.
  \end{eqnarray}
  Recall that the constant $C$ in \eqref{leq:j4} does not depend on $\lambda$.
  Set $\theta = (2C)^{-1}$. Then the operator $A$ is a contraction in
  $\cH^{2,1}(\cD)$ as long as $|\lambda-\lambda_{0}|\leq\theta$,
  which implies that  \eqref{eq:j2} is solvable if
  $|\lambda-\lambda_{0}|\leq\theta$.

  The solvability of equation
  \eqref{eq:j1} for $\lambda=0$ has been given by Theorem \ref{thm:c1}.
  Starting from $\lambda=0$, we can reach $\lambda=1$ in finite steps, and
  this finishes the proof of solvability of equation \eqref{eq:j1}.
\end{proof}

\bibliographystyle{siam}

\end{document}